\pgfplotsset{width=8cm,compat=1.15}
\newcommand{\citethm}[2][]{%
\begingroup
  \let\NAT@mbox=\mbox
  \let\@cite\NAT@citenum
  \let\NAT@space\NAT@spacechar
  \let\NAT@super@kern\relax
  \renewcommand\NAT@open{[}%
  \renewcommand\NAT@close{]}%
  \cite[#1]{#2}%
  \endgroup
}
\newcommand{\NN}{\mathbb{N}}
\newcommand{\RR}{\mathbb{R}}
\newcommand{\PP}{\mathbb{P}}
\newcommand{\EE}{\mathbb{E}}
\newcommand{\de}{\mathrm{d}}
\newcommand{\abs}[1]{\left\lvert #1 \right\rvert}
\newcommand{\norm}[1]{\left\lVert#1\right\rVert}
\newcommand{\N}[1]{\ensuremath{\{1,\ldots,{#1}\}}}
\newcommand{\eg}{\textit{e.g.}}
\DeclareSymbolFont{calletters}{OMS}{cmsy}{m}{n}
\DeclareSymbolFontAlphabet{\mathcal}{calletters}
\def\be{\begin{eqnarray}}
\def\ee{\end{eqnarray}}
\def\b*{\begin{eqnarray*}}
\def\e*{\end{eqnarray*}}
\newtheorem{Theorem}{Theorem}[section]
\newtheorem{Proposition}[Theorem]{Proposition}
\newtheorem{Assumption}{Assumption}
\newtheorem{Lemma}[Theorem]{Lemma}
\newtheorem{Remark}[Theorem]{Remark}
\newtheorem{Example}[Theorem]{Example}
\makeatletter \@addtoreset{equation}{section}
\newcommand{\bea}{\begin{eqnarray}}
\newcommand{\bes}{\begin{subequations}}
\newcommand{\ees}{\end{subequations}}
\newcommand{\bgt}{\begin{gather}}
\newcommand{\egt}{\begin{gather}}
\newcommand{\eea}{\end{eqnarray}}
\newcommand{\beaa}{\begin{eqnarray*}}
\newcommand{\eeaa}{\end{eqnarray*}}
\def \E{\mathbb{E}}
\def \F{\mathbb{F}}
\def \P{\mathbb{P}}
\def \R{\mathbb{R}}
\def \S{\mathbb{S}}
\def \M{\mathbb{M}}
\def \N{\mathbb{N}}
\def\Ac{{\cal A}}
\def\Cc{{\cal C}}
\def\Fc{{\cal F}}
\def\Lc{{\cal L}}
\def\Mc{{\cal M}}
\def\Nc{{\cal N}}
\def\Oc{{\cal O}}
\def\Wc{{\cal W}}
\def \eps{\varepsilon}
\def \0{\mathbf{0}}
\def \vp{\varphi}
\def \x{\times}
\def \Z{\mathbb{Z}}
\def\1{{\rm 1}}
  \def\vs#1{\vspace{2mm}}
\def\D{{\mathrm{D}}}
\DeclareMathOperator{\Tr}{Tr}
\def\Ar{{\rm A}}
\def\Ab{{\mathbb A}}
\def\DD{{\mathbb{D}}}
\def\Ac{{\mathcal A}}
\def\ve{\varepsilon}
\def\Trb#1{{\rm Tr}\left[#1\right]}
\def\scap#1#2{\langle #1,#2\rangle}
\def\Wf{{\rm w}}
\title{Diffusive limit approximation of pure jump  optimal ergodic control problems}
\author{Marc Abeille\footnote{Criteo AI Lab. m.abeille@criteo.com}, Bruno Bouchard\footnote{CEREMADE, Universit\'e Paris-Dauphine, PSL, CNRS.  bouchard@ceremade.dauphine.fr. }, Lorenzo Croissant\footnote{CEREMADE, Universit\'e Paris-Dauphine, PSL, CNRS, and Criteo AI Lab.  croissant@ceremade.dauphine.fr.} }
\newcommand{\abss}[1]{\lvert #1\rvert}
\begin{document}
\maketitle

\begin{abstract} Motivated by the design of fast reinforcement learning algorithms, we study the diffusive limit of a class of pure jump ergodic stochastic control problems. We show that, whenever the  intensity of jumps is large enough, the approximation error is governed by the H\"older continuity of the Hessian matrix of the solution to the limit ergodic partial differential equation. This extends to this context the results of \cite{abeille2021diffusive} obtained for finite horizon problems. We also explain how to construct a first order error correction term under appropriate smoothness assumptions. Finally, we quantify the error induced by the use of the Markov control policy constructed from the numerical finite difference scheme associated to the limit diffusive problem, this seems to be new in the literature and of its own interest. This approach permits to reduce very significantly the numerical resolution cost.      
\end{abstract}

\section{Introduction}

Let $N$ be a random point process with predictable compensator $\eta \nu(\de e)\de t$, for some finite probability measure $\nu$ on $\RR^{d'}$, $d'\in \N$, $\eta>0$, and let $X^{x,\alpha}$ be the solution of 
\[
X^{x,\alpha}=x+  \int_0^\cdot \int_{\RR^{d'}} b(X^{x,\alpha}_{s-},\alpha_{s},e)N(\de e,\de s)\,, 
\]
 in which $\alpha$ belongs to  the set $\Ac$ of predictable controls with values in some given compact set $\Ab\subset \R^{m}$ and the initial data $x\in \R^d$, $m\in \N$. Under some standard stability assumptions, the value of the ergodic optimal control problem 
 \[ \rho^*:=\sup_{\alpha\in\Ac}\liminf_{T\to+\infty}\frac{1}{\eta T}\EE\left[\int_0^T r(X^{0,\alpha}_{s-},\alpha_s)\de N_s\right]\]
 with $N_t:=N(\RR^{d'},[0,t])$, $t\ge 0$, along with some continuous function $\Wf$, solves the integro-differential equation 
 \begin{align}\label{eq: pde integro intro} 
 \rho^*+ \sup_{a\in \Ab}\left\{ {\eta}\int [\Wf(\cdot+b(\cdot,a,e))-\Wf]\nu(\de e)+r(\cdot,a)\right\}=0 \mbox{ on }\RR^d\,
 \end{align}
 possibly in the viscosity solution sense. This characterisation leads to numerical schemes for approximating the value of the problem and the Markovian optimal control. 
 
 However, \eqref{eq: pde integro intro} is non-local in nature which means that, unless $\nu$ is concentrated on a small number of points,  the cost of numerical approximation is large, in particular when the intensity $\eta$ is. This is a problem, e.g., for bidding problems (see e.g.~\cite{croissant2020real}) in online display-ad auctions, where the system moves near-continuously in time, meaning that $\eta$ is very large, and where unknown system parameters motivate the use of reinforcement learning to solve the control problem. Reinforcement learning compounds the cost by requiring computation of $\rho^*$ for many plausible values of the parameters.

 On the other hand, when $\eta$ is very large, asymptotic regimes exist which offer an alternative approximation path, notably the diffusive limit on which this paper focuses. Indeed, taking $\eta=\ve^{-1}$ and $b(x,a,e)=\ve b_1(x,a,e) + \ve^{\frac12} b_2(x,e)$, with $\int_{\R^{d'}}b_{2}(\cdot,e)\nu(\de e)=0$, an immediate second order expansion shows that $(\rho^*,\Wf)$ converges as $\ve\to0$ to the solution $(\bar\rho^*,\bar \Wf)$ of
 \begin{align}
   \bar\rho^* + \sup_{\bar a\in\Ab}\left\{ \int_{\RR^{d'}}b_1^{\top}(\cdot,\bar a,e)\nu(\de e)\D\bar \Wf+\Tr\left[\int_{\RR^{d'}}b_2b_2^\top(\cdot,e)\nu(\de e)\D^2\bar \Wf\right]+r(\cdot,a)\right\}=0 \mbox{ on }\RR^d.\label{eq: intro HJB diff}
 \end{align}
 Unlike \eqref{eq: pde integro intro}, \eqref{eq: intro HJB diff} is a local equation and much more easily solved numerically. Note that another possible limit regime, albeit less precise, is obtained  via a first order expansion as in \cite{fernandez2017optimal}, which corresponds to considering a fluid limit.
 
  For such a specification of the coefficients ($\eta$, $b$), the existence of a diffusive limit is expected, see e.g.~\cite{jacod2013limit} for general results on the convergence of stochastic processes.  Stability of viscosity solutions, see e.g.~\cite[Section 3]{fleming1989existence}, can also be used to prove the convergence of the value function of stochastic control problems. This has been a subject of particular interest in  insurance and queueing network literatures, see e.g.~\cite{bauerle2004approximation,cohen2020rate,chen_fundamentals_2001}. Nonetheless, these approaches do not permit to characterize  the speed of convergence in the case of a (generic) ergodic optimal control problem as defined in Section \ref{sec: jump} below, which is essential for studying reinforcement learning problems.% , see \bru{[joint paper]}. 

The aim of this paper is to characterize this convergence speed and explain how to numerically construct, in an efficient way, an approximation of the optimal control. A first step in this direction was done by  \cite{abeille2021diffusive} who considered  finite time horizon problems. Such problems are  easier to handle from a mathematical point of view, but are unfortunately less adapted to reinforcement learning algorithms.  

Still, a similar approach can be used, up to additional technicalities. As in \cite{abeille2021diffusive}, we study the regularity of $\bar \Wf$ in the solution couple to \eqref{eq: intro HJB diff}. We show that its second order derivative is (locally) $\gamma$-H\"older with a constant of at most linear growth in $x$, for some $\gamma\in(0,1]$, whenever the coefficients of \eqref{eq: intro HJB diff} are uniformly Lipschitz in space,  $\int b_1(\cdot,e)\nu(\de e)$ has linear growth,  $b_2$ and $r$ are continuous and bounded, and under a uniform ellipticity condition. By a second order Taylor expansion, this allows us to pass (rigorously) from \eqref{eq: intro HJB diff}  to \eqref{eq: pde integro intro} up to an error term of order $\ve^{\frac{\gamma}2}$ (locally), and therefore provides the required convergence rate by verification. In general this rate can not be improved. As a by-product, the Markovian control taken from the Hamilton-Jacobi-Bellman equation of the diffusive limit problem provides an $\ve^{\frac{\gamma}{2}}$-optimal control for the original pure-jump control problem. Under additional regularity assumptions, it can even be improved by constructing  a first-order correction term.

In principle, this provides an efficient way of constructing an almost optimal Markovian control. However, it still remains to build up a pure numerical scheme. To complete the picture we therefore derive a convergence rate for a finite difference method for the numerical estimation of $\bar\rho^*$, depending again on $\gamma$. More importantly, we explain how to numerically construct an almost optimal Markovian control process based on a smoothed version of the numerical approximation of $\bar \Wf$ and we obtain a convergence rate towards $\bar\rho^*$, and therefore $\rho^*$,  of the expected average gain associated to such a control. The latter seems to be (surprisingly) completely new and of own interest in the optimal control literature.

As an example of application, we consider in Section \ref{sec:numerical example} a simplified repeated online auction bidding problem, where a buyer seeks to maximise its profit when facing both competition and a seller who adapts its price to incoming bids. Our numerical experiments show that our approximation permits a considerable gain in computation time (as expected).

Note that we restrict here to the case where $b_2$ does not depend on the value of the control, meaning that $\bar \Wf$ solves a semi-linear equation. In principle, the fully non-linear case could be studied along the same lines of arguments but the required regularity of the corresponding function $\bar \Wf$ would be much more complex to derive. We avoid considering this more general case for sake of simplicity (note that standard reinforcement learning problems actually use simple additive noises).

%\lc{Somewhere we should talk about the two analytic paths for ergodic control: ergodic theory (Borkar etc.), mainly leveraging probability and control theory vs. viscosity theory (Arisawa \& Lions) mainly leveraging abstract function analysis and non-linear PDE theory}

{\bf Notations: } We collect here some standard notations that will be used throughout this paper. Any element $x$ of $\R^{d}$ is viewed as a column vector. $\M^{d}$ (resp.~$\S^{d}$) denotes the collection of (resp. symmetric) $d$-dimensional matrices. On $\R^{d}$ or $\M^{d}$, the superscript $^{\top}$ denotes transposition, we set $\scap{x}{y}:=x^{\top} y$ {and $|x|:=\sqrt{\scap{x}{x}}$} for $x,y\in \R^{d}$. We let $\Tr[M]$ denote the trace of $M\in \M^{d}$ {and $|M|$ be the Euclidean norm of $M$ viewed as a vector of $\R^{d\times d}$}. We  denote by $B_{\ell}(x)$ the open ball centered at $x\in \R^{d}$ of radius $\ell>0$. Given an open set ${\cal \Oc}\subset \R^{n}$, $n\ge 1$, $p\in \{0,1,2\}$, we use the standard notation $\Cc^{p}({\cal \Oc})$ to denote the space of $p$-times continuously differentiable real-valued maps $u$ on ${\cal \Oc}$, and $\Cc^p_b({\cal \Oc})$ to denote the subspace of functions $u\in\Cc^p({\cal \Oc})$ such that 
\[ \norm{u}_{\Cc^p_b({\cal \Oc})}:= \sum_{j=0}^p \sup_{x\in {\cal \Oc}}\abs{\D^ju(x)}<\infty\,\]
in which $\D^{0}u:=u$, $\D^{1}u$ is the gradient of $u$, as a line vector, $\D^{2}u$ is the Hessian matrix of $u$. 
%{When $p=0$, and even if $u$ is not continuous, we simply write   $\|u\|_{\infty,\Oc}$ to denote the sup of $|u|$ on $\Oc$.}
Given $\gamma\in [0,1]$, we denote the $\gamma$-H\"older modulus of $u\in\Cc^0({\cal \Oc})$ on ${\cal \Oc}$ as
\[ [u]^{{\gamma}}_{\Cc^{{0}}({\cal \Oc})}:= \sup_{x,x'\in {\cal \Oc}}\frac{\abs{u(x')-u(x)}}{\abs{x'-x}^\gamma}\,,\]
where we use the convention $0/0=0$. 
{If $u=(u^{1},\cdots,u^{d})$ takes values in $\R^{d}$, $d\ge 1$, we use the same notation to denote the sum of the elements $\{[u^{i}]^{\gamma}_{\Cc^0({\cal \Oc})},i\le d\}$}. We write $u\in \Cc^{p,\gamma}({\cal \Oc})$ if $\D^{p}u$ is $\gamma$-H\"older on each compact subset of ${\cal \Oc}$, and $u\in \Cc^{p,\gamma}_{b}({\cal \Oc})$  if  
$$
\|u\|_{\Cc^{p,\gamma}_{b}({\cal \Oc})}:=\norm{u}_{\Cc^p_b({\cal \Oc})}+[\D^{p} u]^{\gamma}_{{\Cc^{0}(\cal \Oc)}} <\infty.
$$
 If $u$ is restricted to take values in a subset ${\cal \Oc}'$ of $\R$, we write  $\Cc^{p}({\cal \Oc};{\cal \Oc}')$, $\Cc^{p}_{b}({\cal \Oc};{\cal \Oc}')$, $\Cc^{p,\gamma}({\cal \Oc};{\cal \Oc}')$ or $\Cc^{p,\gamma}_{b}({\cal \Oc};{\cal \Oc}')$ for the corresponding sets. We also use the notation $C^{0}_{\rm lin}({\cal \Oc})$  to denote the collection of continuous real-valued function $u$  such that  
$$
[u]_{\Cc^{0}_{\rm lin}({\cal \Oc})}:=\sup_{x \in {\cal \Oc}} \frac{| u(x)|}{1+|x|}  <\infty.
$$
{In all the above notations, we omit $\Oc$ if it is equal to $\R^{d}$.}

\section{Pure jump Ergodic Optimal Control}\label{sec: jump}

In order to alleviate notations, we first consider the case where the intensity of the jump process is given, and recall rather standard results from the ergodic control litterature.  

Let $\Omega=\DD$ denote the space of $d$-dimensional c\`adl\`ag functions on $\RR_+$ and ${\cal M}(\RR^{d'}\times\RR_+)$ denote the collection of positive finite measures on $\RR^{d'}\times\RR_+$, for some $d,d'\in\NN^*$.  Consider a measure-valued  map $N: \DD\mapsto \Mc (\RR^{d'}\times\RR_+)$ and a probability measure $\PP$  on $\DD$ such that  $N$ is a right-continuous real-valued $\RR^{d'}$-marked point process with compensator $\eta\nu(\de e)\de t$, in which   $\eta>0$ and $\nu$ is a probability measure on $\RR^{d'}$. See e.g.~\cite{bremaud1981point}. For ease of notations, we set $N_{t}:=N(\RR^{d'},[0,t])$ for $t\ge 0$.

Let ${\F=(\Fc_{t})_{t\ge 0}}$ be the $\P$-augmentation of the filtration generated by $(\int_{0}^{t} \int_{\R^{d'}}e N(\de e,\de r))_{t\ge 0}$. Given a compact set $\Ab\subset\RR^m$, {$m\in \N$}, let $\Ac$ be the collection of $\F$-predictable processes with values in $\Ab$.  Throughout this paper, unless otherwise stated, we will work on the filtered probability space $(\Omega,\Fc, \F, \P)$, where   $\Fc=\Fc_\infty$. 

Given $(t,x)\in \RR_+\x \RR^d$, $\alpha \in \Ac$, and a measurable map $(x,a,e) \in \RR^d\x \Ab\times \RR^{d'} \mapsto b(x,a,e)\in\RR^{d}$, we  define the c\`adl\`ag process ${X^{x,\alpha}}$ as the solution of 
\begin{align}\label{eq: def X jump}
X^{x,\alpha}_\cdot=x+  \int_0^\cdot \int_{\R^{d'}} b(X^{x,\alpha}_{s-},\alpha_{s},e)N(\de e,\de  s)\,.
\end{align}

We then consider the ergodic gain functional 
\begin{align}
   \rho(x,\alpha):=\liminf_{T\to \infty}\frac1{\eta T}  \EE\left[ \int_0^T   r(X_{t-}^{x,\alpha},\alpha_t)\de N_t \right]\,,\; {(x,\alpha)\in \R^{d}\times \Ac},\label{eq:def ergodic functional jump}
  \end{align}
  for some bounded measurable map $(x,a) \in \RR^d\x \Ab\mapsto r(x,a)\in\RR$. Note that this actually also pertains to the case where the reward function $r$ depends on the mark $e$, by arguing as in Remark \ref{rem:reec V lambda et T} below. By the same remark, the cost could have an extra component given in term of the Lebesgue measure.
  
In the above the scaling by $1/(\eta T)$ means that we consider the gain by average unit of time the controller acts on the system. Indeed, $\E[N_{T}]=\eta T$ and the control applies only at jump times of $N$.
 
This functional induces an infinite horizon control problem corresponding to finding the value function 
\begin{align}\label{eq: eq def rho* ergodic saut} 
\rho^*:=\sup_{\alpha\in\Ac}\rho(\cdot,\alpha).
\end{align}
This problem is meaningfully ergodic when  {$\rho^{*}$ is constant over $\R^{d}$}, {i.e.~the initial condition does not play any role. }

 {All throughout this paper, we make the following assumptions.} First, we impose some control on the coefficients $(b,r)$. 

\begin{Assumption}\label{asmp: basics}  The map $(b,r)$ is  continuous. Moreover, there exists $L_{b,r}>0$    such that 
\begin{equation*}
[b(\cdot,a,e)]_{\Cc^0_{{\rm lin}}}+\norm{r(\cdot,a)}_{\Cc^{0,1}_b} \le L_{b,r},\; \mbox{ for all }  (a,e)\in \Ab\x \R^{d'}.
\end{equation*}
\end{Assumption}

%The next assumption guarantees that any possible path of \eqref{eq: def X jump} can be matched exponentially fast by another path starting from a different inital condition, upon using a suitable control. This is a necessary \lc{(Sufficient plutot non?)} condition to ensure that $\rho^{*}$ does not depend on the initial condition. See the proof of Lemma \ref{lemma: Lipschitz value discounted jump} in the Appendix. It can be compared to standard assumptions used in the brownian diffusion case as in e.g.~\cite[Proof of Lemma 7.3.4]{arapostathis2012ergodic}, up to the fact that we allow for using a different control. 

The next assumption, known as assymptotic flatness, guarantees that each control process contracts all possible paths of \eqref{eq: def X jump} exponentially fast to a single trajectory. This is a sufficient condition to ensure that $\rho^{*}$ does not depend on the initial condition. See the proof of Lemma \ref{lemma: Lipschitz value discounted jump} in the Appendix. It can be compared to standard assumptions used in the Brownian diffusion case as in e.g.~\cite[Proof of Lemma 7.3.4]{arapostathis2012ergodic}, up to a more abstract statement.  
\begin{Assumption}\label{asmp: controllability for jumps}
There is $\zeta\in \Cc^{0}(\RR^d\times\RR^d;\RR_+)$ such that 
\begin{enumerate}
\item[(i)] There exists $(\ell_{\zeta},L_{\zeta})\in{(\RR_{+}^*)}^2$ and $p_\zeta \ge 1$ for which 
\[
\ell_{\zeta}|x-x'|^{p_{\zeta}}\le \zeta(x,x') \le L_{\zeta}|x-x'|^{p_{\zeta}},\;\mbox{ for all } x,x'\in \RR^{d}.
\]
\item[(ii)] There exists $C_{\zeta}>0$   such that for all $x,x'\in\RR^d$, $a\in \Ab$ and $\iota>0$ 
  \begin{align}\label{eq: Borkar contraction jump}
  \eta\int_{\R^{d'}}\left\{ \zeta (x+b(x,a,e),x'+b(x',a,e))-\zeta(x,x')\right\}\nu(\de e) \le -C_{\zeta}\zeta(x,x')\,.
  \end{align}
  \end{enumerate}
\end{Assumption}

Our last assumption is typically required to control the long time behavior of solutions of \eqref{eq: def X jump}, see Lemma \ref{lemma: borne EXq} in the Appendix. It is a form of Lyapunov stability assumption, see e.g. \cite{hafstein_lyapunov_2019,borkar_ode_2021} for comparison.
\begin{Assumption}\label{asmp: mean rever}
There is $\xi\in \Cc^{0}(\RR^d\times\RR^d;\RR_+$) such that 
\begin{enumerate}
\item[(i)] There exists $(\ell_{\xi},L_{\xi})\in{(\RR_{+}^*)}^2$ and $p_\xi \ge 1$ for which 
\[
\ell_{\xi}|x|^{p_{\xi}}\le \xi(x) \le L_{\xi}|x|^{p_{\xi}},\;\mbox{ for all } x\in \RR^{d}.
\]
\item[(ii)] There exists $C^{1}_{{\xi}}> 0$ and $C^{2}_{\xi}\in \R$ such that for all $x\in\RR^d$, $a\in \Ab$ and $\iota>0$ 
  \begin{align}\label{eq: Borkar contraction jump pour mean rever}
  \eta\int_{\R^{d'}}\left\{ \xi (x+b(x,a,e))-\xi(x)\right\}\nu(\de e) \le -C^{1}_{\xi}\xi(x)+C^{2}_{\xi}.
  \end{align}
  \end{enumerate}
  \end{Assumption}

%\begin{Remark}\label{rem : sur hyp contra saut} For the main result of this section, Theorem \ref{thm: jump ergodic rho HJB + verif} below, we shall only use the fact that (iii) of Assumption \ref{asmp: controllability for jumps} and (iii) of Assumption \ref{asmp: mean rever} actually apply to $\zeta$ and $\xi$ (and not only to the corresponding approximating sequences). This fact follows from the continuity of the coefficients and the compactness of $\Ab$. The general formulation, using smooth approximating sequences, will be used later on in Section \ref{sec: diffusion}, see Proposition \ref{prop : contraction diffusion}.
%\end{Remark}
%\begin{Remark}
%  It is sufficient for \eqref{eq: Borkar contraction jump} to hold up to an aditive term going to $0$ as $\iota\to0$ like in \cite[Assumption~7.3.1,~p.255] {arapostathis2012ergodic}, up to taking $(\zeta_\iota)_{\iota>0}$ monotone\bru{[pq monotone ?]}. Other relaxations transparent in the proofs of Appendix \ref{app:jump} are possible but would burden exposition.
%\end{Remark}

\begin{Example}\label{example : mean reverting 1} 
  Consider a bidding problem in a repeated auction with reserve (see \eg~\cite{krishna_auction_2009} for an introduction to auctions), in which $X$ stands for the current reserve price and $\alpha$ is the bid. We set $e=(e_1,e_2,e_3,e_4)\in\RR^4$ and consider the dynamic induced by $b(x,a,e):=e_{1}(ae_2+e_{3}-x)$ for $\Ab:=[\underbar a,\overline a]{\subset \R_{+}}$.  {This means that the dynamic is {\sl mean-reverting} around the level $a e_{2}+e_{3}$. In this formula, $e_{2}$ correspond to the retail value (the price at which the bidder will sell to the final client the product he bought) so that the value $a$ of the control  is the so-called shading factor. Then, $e_{1}\ge 0$ is the realization of a random mean-reversion speed and $e_{3}$ is an exogeneous noise. If the reserve price value $x$ is smaller than the bid price $ae_{2}$ (up to the additional noise $e_{3}$) then it moves up for the next auction, and the other way round if it is bigger.} In a second price auction, with $e_4$ as the value of the competition bid, the natural reward function is
  \[ 
  r(x,a)=\int_{\R^{4}} (e_2-x\vee e_4)\1_{\{a e_2\ge x\vee e_4 \}} \nu(\de e)\,.
  \]
  We assume that $\nu([0,1] {\x \R_{+}}\x \RR^2)=1$,  ${1-\int_{\R^{4}} (1-e_{1})^{2p} \nu(de)}=:m_{1}\in (0,1]$ and that $\int_{\R^{4}} \sup_{a\in \Ab} |a  {e_{1}}e_2+{e_{1}}e_{3}|^{2p}\nu (de){<}\infty$, for some integer $p\ge 1$. Then, Assumption \ref{asmp: controllability for jumps} holds with $\zeta(x,x'):=|x-x'|^{2p}$ and $C_{\zeta}=\eta m_{1}$, while Assumption \ref{asmp: mean rever} holds with $\xi(x)=|x|^{2p}$, $C^{1}_{\xi}={\frac12}\eta m_{1}$ and $C^{2}_{\xi}=\eta C_{e}$ {for some $C_{e}>0$ that does not depend on $\eta$}.

Under a standard log-normal model for valuations (see \eg~\cite{ostrovsky_reserve_2011}), and a uniform competition on $[0,\bar c]$ for some $\bar c>0$, it is easily verified that Assumption \ref{asmp: basics} holds. This example is developped further in Section \ref{sec:numerical example}.
\end{Example}

Under the above assumptions, we obtain the following classical result, Theorem \ref{thm: jump ergodic rho HJB + verif} below whose proof is rather standard, but produced in   the Appendix by lack of an appropriate reference.  To state it, we first need to introduce the following auxiliary optimal control problems, defined for all $x\in \R^{d}$, $\lambda,T>0$ and $t\le T$:
\begin{align}\label{eq: def Vlambda} 
V_{\lambda}(x):=\sup_{\alpha\in \Ac}  J_{\lambda}(x,\alpha) \;\mbox{ with } \;J_{\lambda}(x,\alpha):=\frac1\eta \E\left[\int_{0}^{\infty}  e^{-\lambda s}r(X_{s-}^{x,\alpha},\alpha_{s})\de N_{s}\right]  
\end{align}
and 
\begin{align}\label{eq: def VT} 
V_{T}(t,x):=\sup_{\alpha\in \Ac}  J_{T}(t,x,\alpha) \;\mbox{ with } \;J_{T}(t,x,\alpha):=\frac1\eta\E\left[\int_{t}^{T}  r(X_{s-}^{t,x,\alpha},\alpha_{s})\de N_{s}\right]  \,.
\end{align}

\begin{Remark}\label{rem:reec V lambda et T} {Note that Assumption \ref{asmp: basics} implies that $\sup_{[0,t]}|X^{x,\alpha}|$ has moments of any order, for all $t\ge 0$, $(x,\alpha)\in \R^{d}\x \Ac$. Also, it follows from the Assumption \ref{asmp: basics} again and the fact that $\nu$ is a probability measure that 
\begin{align*}
& \rho(x,\alpha)=\liminf_{T\to \infty}  \frac1{ T}\EE\left[ \int_0^T   r(X_{s}^{x,\alpha},\alpha_s)ds\right],\\
&J_{\lambda}(x,\alpha)= \E\left[\int_{0}^{\infty}  e^{-\lambda s}r(X_{s}^{x,\alpha},\alpha_{s})ds\right]  
,\;\mbox{ and }\;
J_{T}(t,x,\alpha)= \E\left[\int_{t}^{T}  r(X_{s}^{t,x,\alpha},\alpha_{s}) ds\right]  \,.
\end{align*}}
For the same reason,  we could consider expected gains of the more general form 
$$
\frac1{\eta T}\EE\left[ \int_0^T  \int_{\R^{d'}}\tilde r(X_{s-}^{x,\alpha},\alpha_s,e) N(\de e,\de s) \right]= \frac1{ T}\EE\left[ \int_0^T  \int_{\R^{d'}}\tilde r(X_{s}^{x,\alpha},\alpha_s,e) \nu(\de e)\de s \right]
$$
upon replacing $r$ by $(x,a)\in \R^{d}\x \Ab \mapsto \int_{\R^{d'}}\tilde r(x,a,e) \nu(\de e)$.  
\end{Remark}

\begin{Theorem}\label{thm: jump ergodic rho HJB + verif}
  Let Assumptions \ref{asmp: basics}, \ref{asmp: controllability for jumps} and \ref{asmp: mean rever} hold. Then, there exists sequences $(\lambda_n)_{n\ge1}$ going to $0$ and $(T_{n})_{n\ge 1}$ going to $+\infty$ such that $(\lambda_n V_{\lambda_n})_{n\ge 1}$ and $(T_{n}^{-1}V_{T_n}(0,\cdot))_{n\ge 1}$ converge  uniformly on compact sets to   $\rho^{*}(0)$, and    such that   $(V_{\lambda_n} - V_{\lambda_n}(0))_{n\ge1}$ converges uniformly on compact sets  to a function $\Wf\in \Cc^{0,1}$ that  solves
  \begin{align}\label{eq: HJB ergodic jump thm}
\rho^{*}&=\sup_{a\in\Ab}\left\{  \eta\int_{\R^{d'}}\left[  \Wf(\cdot+b(\cdot,a,e))-  \Wf\right]\nu(de) +r(\cdot,a)\right\},\;\mbox{ on } \R^{d}.
  \end{align}
  Moreover, 
 $\rho^{*}$ is constant over $\R^{d}$, and, if $(\tilde \Wf,\tilde \rho )\in C^{0}_{\rm lin}\x \R$ solves the ergodic equation 
  \begin{align}\label{eq: HJB ergodic jump}
  \tilde \rho &=\sup_{a\in\Ab}\left\{\eta\int_{\R^{d'}}[\tilde \Wf(\cdot+b(\cdot,a,e))-\tilde \Wf] \nu(\de e)+r(\cdot,a)\right\} ,\;\mbox{ on } \R^{d},
  \end{align}
  then $\tilde \rho=\rho^{*}$.
\end{Theorem}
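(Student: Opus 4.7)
The plan is to run the classical vanishing-discount programme: first deduce the ergodic HJB equation as a limit of the discounted HJB equations, then use that equation in a verification argument to identify both the constancy of $\rho^{*}$ and the uniqueness of $\tilde \rho$. The sequences $(\lambda_{n})$ and $(T_{n})$ appearing in the statement are produced respectively by an Arzel\`a--Ascoli extraction on $V_{\lambda} - V_{\lambda}(0)$ and by an a posteriori estimate on $T^{-1} V_{T}(0,\cdot)$ based on the same verification inequality.

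For the first step, since $r$ is bounded and $\nu$ is a probability measure, Remark \ref{rem:reec V lambda et T} yields $\|\lambda V_{\lambda}\|_{\infty} \le \|r\|_{\infty}$ uniformly in $\lambda > 0$. Assumption \ref{asmp: controllability for jumps}, applied as in the Lipschitz estimate announced as Lemma \ref{lemma: Lipschitz value discounted jump}, provides a Lipschitz bound on $V_{\lambda} - V_{\lambda}(0)$ that is uniform in $\lambda$. Together, these furnish a subsequence $\lambda_{n} \downarrow 0$ such that $V_{\lambda_{n}} - V_{\lambda_{n}}(0) \to \Wf$ uniformly on compact sets for some $\Wf \in \Cc^{0,1}$, and $\lambda_{n} V_{\lambda_{n}}(0) \to \rho_{0}$ for some $\rho_{0} \in \R$. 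The discounted dynamic programming equation
\[
\lambda V_{\lambda} = \sup_{a \in \Ab}\left\{\eta \int_{\R^{d'}}[V_{\lambda}(\cdot + b(\cdot, a, e)) - V_{\lambda}]\nu(\de e) + r(\cdot, a)\right\},
\]
rewritten in terms of $V_{\lambda} - V_{\lambda}(0)$, passes to the limit: the uniform Lipschitz bound kills $\lambda_{n}(V_{\lambda_{n}} - V_{\lambda_{n}}(0))$ and provides a dominated-convergence bound for the $\nu$-integral that is uniform in $a \in \Ab$, using the linear growth of $b$ in $x$, compactness of $\Ab$, and continuity of $(b, r)$. This yields \eqref{eq: HJB ergodic jump thm} with $\rho^{*}$ replaced by the constant $\rho_{0}$. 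Note also that $\lambda_{n} V_{\lambda_{n}} \to \rho_{0}$ uniformly on compact sets, since $\lambda_{n}(V_{\lambda_{n}} - V_{\lambda_{n}}(0))$ vanishes on compact sets.

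The second step is a verification argument. Given any solution $(\tilde \Wf, \tilde \rho) \in C^{0}_{{\rm lin}} \x \R$ of \eqref{eq: HJB ergodic jump}, apply the compensation formula for the marked point process $N$ to $\tilde \Wf(X^{x,\alpha})$; since $\tilde \Wf$ is only continuous with linear growth, this first requires a localization by stopping times $\tau_{R} := \inf\{t : |X^{x,\alpha}_{t}| \ge R\}$, with the Lyapunov estimate of Lemma \ref{lemma: borne EXq} (deduced from Assumption \ref{asmp: mean rever}) ensuring uniform integrability as $R \to \infty$. Combining with the ergodic HJB inequality and Remark \ref{rem:reec V lambda et T} yields, for every $\alpha \in \Ac$,
\[
\E\left[\int_{0}^{T} r(X^{x,\alpha}_{s}, \alpha_{s})\de s\right] \le \tilde \Wf(x) - \E[\tilde \Wf(X^{x,\alpha}_{T})] + \tilde \rho T,
\]
with equality when $\alpha$ is a measurable selector of the supremum in \eqref{eq: HJB ergodic jump}, whose existence follows from compactness of $\Ab$ and continuity of $(b, r)$. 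Dividing by $T$, using the linear growth of $\tilde \Wf$ and the uniform-in-$T$ bound on $\E[|X^{x,\alpha}_{T}|^{p_{\xi}}]$ from Assumption \ref{asmp: mean rever}, the terminal term vanishes as $T \to \infty$, and one concludes $\rho^{*}(x) = \tilde \rho$ for every $x \in \R^{d}$. This simultaneously shows that $\rho^{*}$ is constant, that $\tilde \rho = \rho^{*}$, and, applied to the solution $(\Wf, \rho_{0})$ produced above, that $\rho_{0} = \rho^{*}(0) = \rho^{*}$.

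For the finite-horizon convergence, the same verification inequality applied to $\Wf$ gives $|V_{T}(0, x) - \rho^{*} T| \le |\Wf(x)| + \sup_{\alpha} \E[|\Wf(X^{0,x,\alpha}_{T})|]$, which divided by $T$ tends to $0$ uniformly on compact sets in $x$, thanks again to Lemma \ref{lemma: borne EXq} and the linear growth of $\Wf$; any sequence $T_{n} \to \infty$ then does the job. The hard part is the verification step: the compensation formula must be applied to a merely $C^{0}_{{\rm lin}}$ function $\tilde \Wf$, which forces a careful localization-and-uniform-integrability argument relying crucially on the Lyapunov moment bound of Assumption \ref{asmp: mean rever}; the exchange of limit and supremum in the first step is a secondary technicality handled through dominated convergence and uniform-in-$a$ continuity via compactness of $\Ab$.
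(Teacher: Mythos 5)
Your proposal is essentially correct and follows the same vanishing-discount route as the paper: Lipschitz equicontinuity of $(V_\lambda)_\lambda$ via the contraction Assumption~\ref{asmp: controllability for jumps} (Lemma~\ref{lemma: Lipschitz value discounted jump}), Arzel\`a--Ascoli extraction of $(\lambda_n)$, passage to the limit in the discounted HJB (Lemma~\ref{lem: jump ergodic rho HJB W}), and then a verification argument using the compensation formula together with the moment bound from Assumption~\ref{asmp: mean rever} (Lemmas~\ref{lemma: borne EXq} and \ref{lem: jump ergodic convergence and verification}), which simultaneously delivers the constancy of $\rho^{*}$ and $\tilde\rho=\rho^{*}$.

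The one genuine divergence is in the finite-horizon statement. The paper's Lemma~\ref{lemma: equivalence of ergodic limits} follows the Arisawa--Lions scheme: it picks $T_n:=\delta/\lambda_n$, applies the dynamic programming principle to $V_{\lambda_n}$, bounds the fluctuations by $|\lambda_n V_{\lambda_n}-\rho^{*}|$, and then lets $\delta\downarrow 0$. You instead apply the verification inequality directly to $(\Wf,\rho^{*})$ over a finite horizon to get $|V_T(0,x)-\rho^{*}T|\le |\Wf(x)|+\sup_{\alpha}\E[|\Wf(X^{x,\alpha}_T)|]$, then divide by $T$. Since $\Wf$ is Lipschitz with $\Wf(0)=0$ and Lemma~\ref{lemma: borne EXq} bounds $\E[|X^{x,\alpha}_T|]$ uniformly in $T$ and $\alpha$ (locally uniformly in $x$), the right-hand side over $T$ vanishes uniformly on compacts. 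Your route is more direct, yields convergence along the \emph{entire} family $T\to\infty$ rather than along a subsequence tied to $(\lambda_n)$, and sidesteps the extra $\delta$-limit; the only small caveat is that for the lower bound you need the feedback control $\hat\alpha:=\hat{\rm a}(X_{\cdot-})$ built from a measurable selector of the Hamiltonian (the paper invokes \cite[Prop.~7.33]{BertsekasShreve.78}), and the measurability of the selector deserves a citation rather than appeal to compactness alone. That is a minor presentational gap, not a flaw in the argument.
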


 \begin{Remark}\label{rem : existence limit dans rho et controle optimal} As a by-product  of Theorem \ref{thm: jump ergodic rho HJB + verif} and   the first part of the proof of Lemma \ref{lem: jump ergodic convergence and verification}, for all $x\in \R^{d}$, there exists an optimal Markovian control defined by  $\hat \alpha:=\hat {\rm a}(X^{x, \hat \alpha}_{{\cdot -}})  $ in which $\hat {\rm a}$ is a measurable map satisfying
\begin{align*} 
 \eta\int_{\R^{d'}}   \Wf(\cdot+b(\cdot,\hat {\rm a}(\cdot),e))   \nu(\de e) +r(\cdot,\hat {\rm a}(\cdot)) =  \max_{a\in \Ab}\left\{ \eta\int_{\R^{d'}}   \Wf(\cdot+b(\cdot,{\rm a},e))   \nu(\de e) +r(\cdot,{\rm a})\right\} ,\;\mbox{ on } \R^{d}.
\end{align*} 
Moreover, 
$$
\rho^{*}=\lim_{T\to \infty }\frac{1}{\eta T }\EE\left[\int_0^T  r(X^{x,\hat \alpha}_{t-},\hat \alpha_{t})\de N_{t} \right].
$$
 \end{Remark}

\section{Approximation for models with large activity }\label{sec: diffusion}

Given an $\varepsilon\in (0,1)$, we   now   replace $\eta$ by  
\[\eta_{\varepsilon}:=\varepsilon^{-1}\,.\]

In the following, we omit the dependence of $N$ and $X^{x,\alpha}$ on $\varepsilon$ for ease of notations and set 
\[ 
\rho_\ve^{*}:=\sup_{\alpha\in\Ac}\liminf_{T\to \infty} \frac{1}{\eta_\ve T }\EE\left[\int_0^T  r(X^{0,\alpha}_{t-},\alpha_{t})\de N_{t} \right]\,.
\]
 We shall see that $\rho^{*}_\varepsilon$, together with the associated optimal policy, can be approximated by considering its diffusive limit as $\varepsilon\to 0$, upon assuming that the jump coefficient $b:=b_\ve$ introduced in Section \ref{sec: jump} is of the form
\[b_{\varepsilon}=\varepsilon b_{1}+\sqrt{\varepsilon} b_{2}\,,\]
and making the following assumption. 

\begin{Assumption}\label{asmp: diffusion limit existence}  
We have $b =\ve b_1  + \sqrt{\ve} b_2 $ for some continuous functions $b_1:\R^{d}\x \Ab\x\R^{d'}\mapsto \R^{d}$ and $b_2:\R^{d}\x\R^{d'}\mapsto \R^{d}$ such that:
\begin{enumerate}
\item[{\rm (i)}] There exists $L_{b_{1},b_{2}}>0$ such that  
$$[b_1(\cdot,a,e)]_{\Cc^0_{{\rm lin}}} + \norm{b_2(\cdot,e)}_{\Cc^0_b}\le L_{b_{1},b_{2}}$$ for all $(a,e)\in\Ab\x\RR^{d'}$. 
\item[{\rm (ii)}] There exists $\varsigma>0$ such that 
\[ \int_{\R^{d'}} b_2(\cdot,e)\nu(\de e)=0 \mbox{ and }  \int_{\R^{d'}} b_2(\cdot,e)b_2(\cdot,e)^\top \nu(\de e) \ge \varsigma\bm{I}_d\, \]
where  $\bm{I}_d$ is the identity matrix.
\item[{\rm (iii)}] The map
\begin{align*}
 (x,a)\in \RR^d\times\Ab  \mapsto \mu(x,a):=\int_{\R^{d'}} b_1(x,a,e)\nu(\de e)
\end{align*}
is Lipschitz in $x$ uniformly in $a$, {and there exists  a Lipschitz {$\R^{d\x d}$}-valued function $\sigma$ defined on $\R^{d}$ such that }
\begin{align*}
\sigma\sigma^{\top}=\int_{\R^{d'}} b_2(\cdot,e)b_2^\top (\cdot,e)\nu(\de  e).
\end{align*}
\item[{\rm (iv)}] The estimates of Assumptions \ref{asmp: basics}, \ref{asmp: controllability for jumps} and \ref{asmp: mean rever} hold for each $(\eta_{\eps,}b_{\eps},r)$ in place of $(\eta,b,r)$, uniformly in $\eps>0$.  
\end{enumerate}
\end{Assumption}

\begin{Example}\label{example : mean reverting 3}Consider the context of Example \ref{example : mean reverting 1} in which  $\eta=\eps^{-1}$ and 
$$
b_{\eps}(x,a,e)=e_{1}(\eps (e_{2}a-x)+\eps^{\frac12} e_{3}), \;(x,a,e)\in \R^{d}\x \Ab\x \R^{4}
$$ 
with $\nu$ as in Example \ref{example : mean reverting 1} such that in addition $\int_{\R^{4}} e_{1}e_{3}\nu(\de e)=0$. 
In this context, we obtain $\mu(x,a)=n_{2}a-n_{1}x$, with $n_{1}:=\int_{\R^{4}}e_{1}\nu(de)$ and $n_{2}:= \int_{\R^{4}} e_{1}e_{2} \nu(\de e)$, and $\sigma(x)^{2}= \int_{\R^{4}} |e_{1}e_{3}|^{2}\nu(\de e)$.
\\
Assume that $n_{1}>0$. Using a second order Taylor expansion around $\eps=0$, one easily checks that Assumption \ref{asmp: mean rever} holds with $\xi(x)=|x|^{2p}$, $p\ge 1$, for some $C^{1}_{\xi}$ and $C^{2}_{\xi}$ that do not depend on $\eps>0$. Similarly, Assumption \ref{asmp: controllability for jumps} holds with $\zeta(x,x')=|x-x'|^{2p}$, $p\ge 1$, for some $C_{\zeta}>0$, uniformly in $\eps\in (0,\eps_{\circ})$, for some $\eps_{\circ}>0$ small enough.
\end{Example}

\subsection{Candidate diffusion limit}\label{subsec: candidate diff limit}

 Let $\bar \P$ be a probability measure on $\DD$ and let $W$ be a stochastic process such that $W$ is a $\bar \P$-Brownian motion,  let {$\bar \F=(\bar \Fc_{s})_{s\ge 0}$} be the $\bar \P$-augmentation of the filtration generated by {$W$}, and let $\bar \Ac$ be the collection of $\bar \F$-predictable processes. Given $\bar \alpha \in \bar \Ac$, we can then define $\bar X^{x,\bar\alpha}$ as the unique strong solution (see \cite[Thm. 1]{veretennikov1981strong}) of 
\begin{align}\label{eq: def X continu}
\bar X^{x,\bar\alpha}=x+\int_{0}^{\cdot} \mu(\bar X^{x,\bar\alpha}_{s},\bar\alpha_{s})\de s +\int_{0}^{\cdot} \sigma(\bar X^{x,\bar\alpha}_{s})\de W_{s}\,.
\end{align}%

The corresponding ergodic control problem is defined by 
$$
\bar \rho^{*}(x):=\sup_{\bar \alpha\in\bar\Ac}\liminf_{T\to \infty} \frac{1}{T  }\EE\left[\int_0^T  r(\bar X^{{x},\bar \alpha}_{t},\bar \alpha_{t})\de {t} \right]\,,\;x\in \R^{d}.
$$

As in Section \ref{sec: jump},  we define for $\lambda>0$ and $x \in  \R^{d}$ 
\[ 
\bar V_{\lambda}(x):=\sup_{\bar \alpha\in \bar \Ac}  \bar J_{\lambda}(x,\bar \alpha) \;\mbox{ with } \;\bar J_{\lambda}(x,\bar \alpha):=\E\left[\int_{0}^{\infty}  e^{-\lambda s}r(\bar X_{s}^{x,\bar \alpha},\bar \alpha_{s})\de s\right],
\]
and impose conditions corresponding to the estimates of Lemma  \ref{lemma: Lipschitz value discounted jump} and \ref{lemma: borne EXq}.  

\begin{Assumption}\label{assumption: Lipschitz value discounted jump + borne EXq} There exists $L_{\bar V},C_{\bar X}>0$ and $p_{\bar X}\ge 1$ such that:
\\
{\rm (i)} For all $x,x'\in \R^{d}$ and   $\lambda \in (0,1)$, 
$$
|\bar V_{\lambda}(x)-\bar V_{\lambda}(x')|\le L_{\bar V}|x-x'|.
$$
{\rm (ii)} For all $x\in \R^{d}$ and   $\bar \alpha \in \bar \Ac$, 
\begin{align*} 
 \E[|\bar X^{x,\bar \alpha}_{t}|^{p_{\bar X}}]&\le C_{\bar X}\left\{e^{- t / C_{\bar X} }|x|^{p_{\bar X}}+ 1\right\} ,\;t\ge 0.
\end{align*}
 \end{Assumption}
 
\begin{Remark}\label{rem: pour hyp lipschiz et controle EXq diffusif} 
{\rm (i)} The condition {\rm (i)} of Assumption \ref{assumption: Lipschitz value discounted jump + borne EXq}  holds for instance under \cite[Assumption 7.3.1]{arapostathis2012ergodic}. Indeed, the latter implies a similar bound as \eqref{eq: borne contractance X-X'}, see \cite[Lemma 7.3.4]{arapostathis2012ergodic}, and the estimate of {\rm (i)} then follows from the same arguments as in the proof of Lemma \ref{lemma: Lipschitz value discounted jump}. More generally, it suffices to find a family of $\Cc^{2}(\R^{d}\times \R^{d};\R)$-functions $(\bar \zeta_{\iota})_{\iota>0}$ that is locally bounded, satisfies 
\begin{align}\label{eq: diffusion contraction eqn}
   \D \bar \zeta_{\iota}(x,x')   \begin{pmatrix} \mu(x,a) \\ \mu(x',{a}) \end{pmatrix}   +\frac12\Tr\left[  {\Sigma}(x,x') \D^2\bar \zeta_{\iota}(x,x')\right]\le -C_{{\bar \zeta}} \bar \zeta_{\iota}(x,x')+\varrho_{\iota} \,,\;   x,x'\in \R^{d},\; a\in \Ab,\;\iota>0, 
\end{align}
in which $C_{{\bar \zeta}}>0$, $\lim_{\iota\to 0} \varrho_{\iota}=0$ and 
$$
 \Sigma(x,x'):=\left(\begin{array}{c}\sigma(x)\\\sigma(x')\end{array}\right)\left(\begin{array}{c}\sigma(x)\\\sigma(x')\end{array}\right)^{\top},
$$
and such that $(\bar \zeta_{\iota})_{\iota>0}$ converges pointwise as $\iota \to 0$ to a map $\bar \zeta:\R^{d}\x \R^{d}\mapsto \R$ satisfing 
$$
\frac1{C_{\bar \zeta}}|x-x'|^{p_{\bar \zeta}}\le \bar \zeta(x,x')\le C_{\bar \zeta}|x-x'|^{p_{\bar \zeta}}\,,\; \mbox{for all } x,x'\in \R^{d},
$$
for some  $p_{\bar \zeta}\ge 1$. This follows from the arguments used in the proof of Lemma \ref{lemma: Lipschitz value discounted jump} upon first applying It\^{o}'s lemma to $\bar \zeta_{\iota}$ and then sending $\iota\to 0$ to deduce the counterpart of \eqref{eq: pre borne contractance X-X'} before using the  inequalities just above.
\\
{\rm (ii)} The condition {\rm (ii)} of Assumption \ref{assumption: Lipschitz value discounted jump + borne EXq}    holds for instance if we can find a smooth function $\bar \xi$ and constants $C^{1}_{\bar \xi}>0$ and $C^{2}_{\bar \xi}$ such that 
 \begin{align}\label{eq:cond suff mean revert bar V}
  \D\bar \xi (x)\mu(x,a)+\frac12\Tr\left[   {\sigma\sigma^{\top}}(x) \D^2\bar \xi(x)\right] \le -C^{1}_{\bar \xi}\bar \xi(x)+C^{2}_{\bar \xi},
  \end{align}
  and 
  \begin{align}\label{eq:cond suff mean revert bar V 2}
 \frac{1}{C^{2}_{\bar \xi}}|x|^{p_{\bar \xi}}\le \bar \xi(x) \le C^{2}_{\bar \xi}|x|^{p_{\bar \xi}},
\end{align}
   for all $x\in \RR^{d}$, for some $p_{\bar \xi}\ge 1$. This follows from the same arguments as in the proof of Lemma \ref{lemma: borne EXq}. As in {\rm (i)} above, it suffices that \eqref{eq:cond suff mean revert bar V} holds for a sequence of approximating smooth functions. In particular, condition {\rm (ii)} of Assumption \ref{assumption: Lipschitz value discounted jump + borne EXq} holds under  \cite[Assumption 7.3.1]{arapostathis2012ergodic}, see  \cite[Lemma 7.6.3]{arapostathis2012ergodic}.
\end{Remark}

\begin{Example}\label{example : mean reverting 2}  Consider the context of Example \ref{example : mean reverting 3} with $\sigma$  constant, then it satisfies  \cite[Assumption 7.3.1]{arapostathis2012ergodic}, and therefore Assumption \ref{assumption: Lipschitz value discounted jump + borne EXq}, by \cite[Example 7.3.3]{arapostathis2012ergodic}. 
\end{Example}

In order to state the   counterpart of Theorem \ref{thm: jump ergodic rho HJB + verif} for the diffusive limit ergodic control problem, we also define,  for $T>0$, $t\le T$ and  $x\in   \R^{d}$, 
\[ 
\bar V_{T}(t,x):=\sup_{\bar \alpha\in \bar \Ac}  \bar J_{T}(t,x,\bar \alpha) \;\mbox{ with } \;\bar J_{T}(t,x,\bar \alpha):=\E\left[\int_{t}^{T}  r(\bar X_{s}^{t,x,\bar \alpha},\bar \alpha_{s})\de s\right]  \,,
\]
and set 
$$
\bar \Lc^{\bar a}\vp=\D \vp \mu(\cdot,\bar a)+\frac12 \Tr[ {\sigma\sigma^{\top}} \D^{2}\vp], \;\bar a\in \Ab,
$$
for a smooth function $\vp:\R^{d}\to \R$.

\begin{Theorem}\label{thm: viscosity solution of ergodic diffusion PDE}
  Let Assumptions \ref{asmp: diffusion limit existence} and \ref{assumption: Lipschitz value discounted jump + borne EXq} hold. Then,
there exists sequences $(\lambda_n)_{n\ge1}$ going to $0$ and $(T_{n})_{n\ge 1}$ going to $+\infty$ such that $(\lambda_n \bar V_{\lambda_n})_{n\ge 1}$ and $(T_{n}^{-1}\bar V_{T_n}(0,\cdot))_{n\ge 1}$ converge  uniformly on compact sets to   $\bar \rho^{*}(0)$, and such that      $( \bar V_{\lambda_n} -  \bar V_{\lambda_n}(0))_{n\ge1}$ converges uniformly on compact sets  to a  function $ \bar \Wf\in \Cc^{2} \cap \Cc^{0}_{{\rm lin}} $   that  satisfies
  \begin{align}\label{eq: HJB ergodic diffusive thm}
 \bar \rho^{*}&=\sup_{\bar a\in\Ab}\left\{\bar \Lc^{\bar a}\bar \Wf+r(\cdot,\bar a)\right\} \,,\mbox{ on } \R^{d},
  \end{align}
  and
  \begin{align}\label{eq: estimee bar W}
\|\bar \Wf\|_{\Cc^{0,1}_{b}}  \le L^{\gamma}_{\bar \Wf}\mbox{ and }  \|\bar \Wf\|_{\Cc^{2,\gamma}_{b}(B_{1}(x))}  \le L^{\gamma}_{\bar \Wf}(1+|x|), \;\mbox{ for all $x\in \R^{d}$,} 
  \end{align}
  for some $L^{\gamma}_{\bar \Wf}>0$, for all $\gamma \in (0,1)$. 
 Moreover, 
 $\bar \rho^{*}$ is constant over $\R^{d}$, and, if $(\tilde \Wf,\tilde \rho )\in (\Cc^{2}  \cap \Cc^{0}_{{\rm lin}})  \x \R$ solves the ergodic equation 
  \begin{align}\label{eq: HJB ergodic jump}
  \tilde \rho &=\sup_{\bar a\in\Ab}\left\{\bar \Lc^{\bar a} \tilde \Wf+r(\cdot,\bar a)\right\}  ,\;\mbox{ on } \R^{d},
  \end{align}
  then $\tilde \rho=\bar \rho^{*}$.
\end{Theorem}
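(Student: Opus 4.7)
The plan is to follow the same vanishing discount scheme used for Theorem 2.1, but leverage the uniform ellipticity in Assumption 3.1(ii) to obtain classical regularity of the limit. Writing $W_\lambda := \bar V_\lambda - \bar V_\lambda(0)$ and $\rho_\lambda := \lambda \bar V_\lambda(0)$, the discounted HJB equation
$$ \lambda \bar V_\lambda = \sup_{\bar a\in\Ab}\bigl\{\bar \Lc^{\bar a}\bar V_\lambda + r(\cdot,\bar a)\bigr\} $$
(satisfied by $\bar V_\lambda$ in the viscosity sense by standard diffusion control theory) rewrites, by linearity of $\bar \Lc^{\bar a}$ and the fact that it annihilates constants, as
$$ \rho_\lambda + \lambda W_\lambda = \sup_{\bar a\in\Ab}\bigl\{\bar \Lc^{\bar a} W_\lambda + r(\cdot,\bar a)\bigr\}. $$
First I would record the needed a priori bounds: Assumption 3.3(i) yields that $\{W_\lambda\}_{\lambda\in(0,1)}$ is uniformly Lipschitz (hence locally equicontinuous) with $W_\lambda(0)=0$; Assumption 3.3(ii) combined with the boundedness of $r$ gives $|\rho_\lambda|\le \|r\|_\infty$, and more precisely $|\lambda \bar V_\lambda(x)|\le C(1+|x|^{p_{\bar X}})$.

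By Arzelà–Ascoli and a diagonal extraction, pick $\lambda_n\downarrow 0$ such that $W_{\lambda_n}\to \bar \Wf$ uniformly on compacts for some $\bar \Wf\in\Cc^{0,1}_b$ with $\bar \Wf(0)=0$, and $\rho_{\lambda_n}\to\bar\rho^*(0)$ (identifying the limit with the ergodic value is part of step 4). Stability of viscosity solutions, combined with the uniform bound $\lambda_n W_{\lambda_n}\to 0$ on compacts, gives that $\bar \Wf$ satisfies in the viscosity sense the equation
$$ \bar\rho^*(0) = \sup_{\bar a\in\Ab}\bigl\{\bar \Lc^{\bar a}\bar \Wf + r(\cdot,\bar a)\bigr\}. $$
Now the key regularity step: because $\sigma\sigma^\top\ge\varsigma \bm{I}_d$ and the coefficients $\mu,\sigma,r$ are Lipschitz and bounded (Assumption 3.1), the HJB operator is uniformly elliptic with convex Hamiltonian. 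Evans–Krylov / Safonov interior estimates apply and yield $\bar \Wf\in \Cc^{2,\gamma}_{\rm loc}$ for every $\gamma\in(0,1)$, together with the local bound
$$ \|\bar \Wf\|_{\Cc^{2,\gamma}_b(B_{1}(x))} \le C_\gamma\bigl(\|\bar \Wf\|_{L^\infty(B_{2}(x))} + \|r\|_{\Cc^{0,1}_b} + |\bar\rho^*(0)|\bigr). $$
Feeding the uniform Lipschitz bound $\|\bar \Wf\|_{L^\infty(B_{2}(x))}\le L_{\bar \Wf}(1+|x|)$ into the right-hand side delivers the linear growth estimate \eqref{eq: estimee bar W} after enlarging $L^\gamma_{\bar \Wf}$.

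Finally, I would close the loop with a verification argument, exactly as in the jump case. Since $\bar \Wf\in\Cc^2\cap\Cc^0_{\rm lin}$, for any $(x,\bar\alpha)\in\R^d\times\bar\Ac$ Itô's formula applied to $\bar \Wf(\bar X^{x,\bar\alpha})$ between $0$ and $T$, combined with the moment bound in Assumption 3.3(ii) to kill the local martingale term and to control $\E[\bar \Wf(\bar X^{x,\bar\alpha}_T)]/T\to 0$, yields $T^{-1}\bar J_T(0,x,\bar\alpha)\le \bar\rho^*(0)+o(1)$, and equality up to $o(1)$ for the Markovian control that maximises the Hamiltonian in the HJB. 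This simultaneously shows $\bar\rho^*\equiv \bar\rho^*(0)$ on $\R^d$, gives the convergence $T_n^{-1}\bar V_{T_n}(0,\cdot)\to \bar\rho^*(0)$ for any $T_n\to\infty$, and establishes uniqueness of the constant $\tilde\rho$ for any $(\tilde \Wf,\tilde\rho)\in(\Cc^2\cap\Cc^0_{\rm lin})\times\R$ solving the ergodic equation.

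The main obstacle is the regularity step: obtaining the Schauder-type estimate with the precise linear-in-$x$ dependence of the constant. The dependence of the local $\Cc^{2,\gamma}$ bound on $\|\bar \Wf\|_{L^\infty(B_2(x))}$ is not trivially of linear growth and must be paired carefully with the uniform Lipschitz estimate; any more refined $x$-dependence in the coefficients (beyond the uniformly Lipschitz setting) would break it. Everything else is essentially a transcription of the proof of Theorem 2.1 with the Brownian infinitesimal generator $\bar \Lc^{\bar a}$ in place of its integral counterpart, and standard verification.
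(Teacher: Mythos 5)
Your overall architecture — vanishing discount, Arzel\`a--Ascoli extraction, viscosity stability, interior elliptic regularity, It\^o-based verification — matches the paper's. But the regularity step, which you correctly flag as the crux, has a genuine gap. You invoke Evans--Krylov/Safonov to get $\bar\Wf\in\Cc^{2,\gamma}_{\rm loc}$ for \emph{every} $\gamma\in(0,1)$. Those estimates are for fully nonlinear convex/concave equations and yield $\Cc^{2,\alpha}$ only for \emph{some} universal $\alpha>0$ determined by the ellipticity constants; they do not give an arbitrary H\"older exponent, and the range of $\gamma$ is precisely what drives the convergence rate $\ve^{\gamma/2}$ in Theorem \ref{thm: delta epsilon bound on discounted value difference}. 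The reason the full range $\gamma\in(0,1)$ is attainable here is that the diffusion coefficient $\sigma$ does not depend on the control, so the HJB operator is \emph{quasilinear}: the $\sup_{\bar a}$ acts only on the first-order part. This is exactly what Appendix B exploits — freezing $\D u$ in the drift, applying linear $W^{2,p}$ (Calder\'on--Zygmund) estimates plus Sobolev embedding to get $\Cc^{1,\gamma}$, then linear Schauder estimates (Lemma \ref{lem: estimate linear PDE 2}), tracking the linear-in-$x$ growth of the local constants. Citing Evans--Krylov here both invokes the wrong tool and misses the structural reason the full exponent range is available.

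A second, less critical, divergence: you first pass to the limit $\lambda_n\to 0$ to obtain a viscosity solution $\bar\Wf\in\Cc^{0,1}$ and then try to upgrade its regularity. The paper instead proves a \emph{uniform-in-$\lambda$} bound $\|\Delta\bar V_\lambda\|_{\Cc^{2,\gamma}_b(B_1(x))}\le K(1+|x|)$ at the level of the discounted value functions (after first establishing $\bar V_\lambda\in\Cc^2$ via truncated Dirichlet problems on $B_n(0)$ following \cite{arapostathis2012ergodic}), so that the limit $\bar\Wf$ inherits the smoothness and the local estimate by stability of the bound. Your route is possible in spirit but requires a $W^{2,p}$ theory for \emph{viscosity} solutions (Caffarelli-type), whose constants do not immediately give the full $\gamma$ range either, whereas the paper's order of operations keeps everything within classical linear elliptic theory for strong solutions. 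The verification and the identification of $\bar\rho^*(0)$ as the common limit of $\lambda_n\bar V_{\lambda_n}$ and $T_n^{-1}\bar V_{T_n}(0,\cdot)$ are handled as you describe and agree with the paper.
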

  
\begin{proof} The proof is exactly the same as the one of Theorem \ref{thm: jump ergodic rho HJB + verif} upon replacing the estimates of Lemmas \ref{lemma: Lipschitz value discounted jump} and \ref{lemma: borne EXq} by the ones of Assumption \ref{assumption: Lipschitz value discounted jump + borne EXq}. See the Appendix. The only significant difference is that we have to show  the estimate \eqref{eq: estimee bar W}. 

{1.} The fact that, for an appropriate sequence $(\lambda_{n})_{n\ge 0}$ that converges to $0$, $\lambda_{n} \bar V_{\lambda_{n}}(0)\to c\in \R$ and $ \bar V_{\lambda_{n}}-\bar V_{\lambda_{n}}(0)\to \bar \Wf$ uniformly on compact sets for some $\bar \Wf\in \Cc^{0,1}$   follows from Assumption \ref{assumption: Lipschitz value discounted jump + borne EXq}   and the same arguments as in the first part of the proof of Lemma \ref{lem: jump ergodic rho HJB W} below. 

{2.} We now argue as in the proof of \cite[Theorem 3.5.6]{arapostathis2012ergodic}. {Fix $n\ge 1$, let $\bar \tau^{x,\bar \alpha}_{n}$ be the first exit time of $\bar X^{x,\bar\alpha}$ from $B_{n}(0)$, for $(x,\bar \alpha)\in \R^{d}\x\bar \Ac$, and set 
\[ 
\bar V^{n}_{\lambda}(x):=\sup_{\bar \alpha\in \bar \Ac}  \E\left[\int_{0}^{\bar\tau_{n}^{x,\bar \alpha}}  e^{-\lambda s}r(\bar X_{s}^{x,\bar \alpha},\bar \alpha_{s})\de s\right].
\]
Then, $\bar V^{n}_{\lambda} \in \Cc^{2}(B_{n}(0))$ by the arguments in the proof of \cite[Theorem 3.5.6]{arapostathis2012ergodic}. Moreover, Assumption \ref{assumption: Lipschitz value discounted jump + borne EXq} and the linear growth of $r$ (recall that it is assumed Lipschitz) imply that
$$
\sup_{n\ge 1} [\bar V^{n}_{\lambda}]_{\Cc^0_{{\rm lin}}}\le C_{\lambda}
$$ 
for some $C_{\lambda}>0$. Then, arguing as in the proof of \cite[Theorem 3.5.6]{arapostathis2012ergodic}, we obtain that, for all $\lambda>0$, $(\bar V^{n}_{\lambda} )_{n\ge 1}$ converges as $n\to \infty$ to a map $\psi_{\lambda}\in \Cc^{2}$ that solves  
  \begin{align*} 
    \lambda \psi_{\lambda} &=\sup_{\bar a\in\Ab}\left\{\bar \Lc^{\bar a}  \psi_{\lambda} +r(\cdot,\bar a)\right\}  ,\;\mbox{ on } \R^{d},
  \end{align*}
  and has at most linear growth. Using this linear growth property, Assumption \ref{assumption: Lipschitz value discounted jump + borne EXq}  and a verification argument, we deduce that $\psi_{\lambda}=\bar V_{\lambda}$.}
  
{Since $\bar V_\lambda\in\Cc^{0,1}_b$ by Assumption \ref{assumption: Lipschitz value discounted jump + borne EXq}, it follows from Assumption \ref{asmp: diffusion limit existence} and} Lemma \ref{lem: estimate linear PDE 2} that, given $\gamma\in (0,1)$, $\bar V_{\lambda}\in {\Cc^{2,\gamma}}$ and that there is $K >0$ (depending on $\gamma$ but not on $\lambda\in (0,1)$) such that 
\begin{align}\label{eq: estime C2+1 bar V lambda} 
 \| {\Delta} \bar V_{\lambda}\|_{\Cc^{2,{\gamma}}_{b}(B_{1}(x))}  \le K(1+\abs{x}), \;\mbox{ for all $(x,\lambda)\in \R^{d}\x (0,1)$,} 
\end{align}
 {where}   $\Delta \bar V_{\lambda}:=\bar V_{\lambda}-\bar V_{\lambda}(0)$ solves 
  \begin{align*} 
    \lambda \bar V_{\lambda}(0)+\lambda  \Delta \bar V_{\lambda} &=\sup_{\bar a\in\Ab}\left\{\bar \Lc^{\bar a}  \Delta \bar V_{\lambda} +r(\cdot,\bar a)\right\}  ,\;\mbox{ on } \R^{d}.
  \end{align*}
  Let   $(\lambda_{n})_{n\ge 0}$ be as in step 1. 
  %$\lambda_{n} \bar V_{\lambda_{n}}(0)\to c\in \R$ and $\lambda_{n} \Delta \bar V_{\lambda_{n}}\to 0$ uniformly on compact sets. Thus, by Lemma \ref{lem: estimate linear PDE 2} again, we can find $K'>0$ such that for each $\ell>0$ we can find $n_{\ell}$ large enough so that for   $n\ge n_{\ell}$ we have 
   %$$
 %\|\Delta \bar V_{\lambda_{n}}\|_{\Cc^{2,{\gamma}}_{b}(B_{1}(x))}  \le K'(1+\abs{x}),\;, \;\mbox{ for all $x\in B_{\ell}(0)$.} 
 %$$  
% By Lemma \ref{assumption: Lipschitz value discounted jump + borne EXq}, recall that $\|\Delta \bar V_{\lambda_{n}}\|_{\Cc^{0}(B_{3}(x))}\le L_{\bar V}(4+|x|)$. 
Passing to the limit in the above leads to \eqref{eq: HJB ergodic diffusive thm}, with $c$ defined in step 1.~in place of $\bar \rho^{*}$, and to \eqref{eq: estimee bar W}.
 
 {3.} By the same arguments as in Lemma \ref{lem: jump ergodic convergence and verification}, if $(\tilde \Wf,\tilde \rho )\in (\Cc^{2} \cap \Cc^{0}_{{\rm lin}} )\x \R$ solves \eqref{eq: HJB ergodic jump}   then $\tilde \rho=\bar \rho^{*}$. In particular, $\bar \rho^{*}$ is constant and $c=\bar \rho^{*}$ by step  2.
  
  {4.} The fact that there exists  $(T_{n})_{n\ge 1}$ going to $+\infty$ such that  $(T_{n}^{-1}\bar V_{T_n}(0,\cdot))_{n\ge 1}$ converge  uniformly on compact sets to   $\bar \rho^{*}(0)$ then follows from the same arguments as in Lemma \ref{lemma: equivalence of ergodic limits}.
 \end{proof}

\subsection{First order approximation guarantees}

We  can now  turn to the main part of this paper and quantify the approximation error due to passing to the diffusive limit in the original pure jump problem. We will show below that it controlled by the H\"older regularity of ${\D}^{2} \bar \Wf$, namely that the approximation error is of the order of $\eps^{\frac{\gamma}{2}}$ for all $\gamma \in (0,1)$. In Section \ref{sec: correction term}, we will see that it can be improved by considering appropriate correction terms.  

The cornerstone of the analysis is the residual term of a second order Taylor expansion of $\bar \Wf$ performed on the Dynkin operator of the jump diffusion process \eqref{eq: def X jump}, namely: 
\begin{align}
\delta r_{\ve}(x,a):=
 \frac1\ve\int_{\R^{d'}} \left[\bar \Wf(x+b_{\ve}(x,a,e))-\bar \Wf(x)\right]\nu(\de e)-\D \bar \Wf(x)  \mu(x, a)-\frac12 \Tr[{\sigma\sigma^{\top}}(x) \D^{2}\bar \Wf(x)]\,,\label{eq: def delta r eps}
\end{align}
defined for $(x,a)\in \R^{d}\x \Ab$. The function $\delta r^\ve$ measures the error of the diffusion approximation explicitely in terms of the control problem, and thus will be shown to effectively control the error in all quantities of interest. Leveraging the regularity results in \eqref{eq: estimee bar W}, the H\"older regularity of $\D^2\bar \Wf$ yields Proposition \ref{prop: error order of delta r epsilon}, which in turn yields Theorem \ref{thm: delta epsilon bound on discounted value difference}.

\begin{Proposition}\label{prop: error order of delta r epsilon}
  Let Assumptions \ref{asmp: diffusion limit existence} and \ref{assumption: Lipschitz value discounted jump + borne EXq} hold with $p_{\xi}\ge 3$. {Fix $\gamma\in (0,1)$.} Then, there exists $L^{\gamma,1}_{\delta r} ,L^{\gamma,2}_{\delta r} >0$ such that, for each $0<\eps\le\eps_{\circ}:=  (L_{b_{1},b_{2}} )^{-2}$ and $(x,a)\in \R^{d}\x \Ab$,  
\begin{align}\label{eq: borne delta r epsilon dans thm} 
|\delta r_{\ve} (x,a)|\le  \ve^{\frac{{{\gamma}}}{2}}L^{\gamma,1}_{\delta r} (1+|x|^{3}), 
\end{align}
 and
\begin{align}\label{eq: borne EClambda K eps le long X} 
\sup_{t\ge 0}\sup_{\alpha \in \Ac} \E[\vert\delta r_{\ve}(X^{x,\alpha}_{t},\alpha_t)\vert]\le  \ve^{\frac{{{\gamma}}}{2}} L^{\gamma,2}_{\delta r}(1+|x|^{3})\;.
\end{align}
\end{Proposition}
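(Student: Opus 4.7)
The proof rests on a second-order Taylor expansion of $\bar \Wf$ at $x$ in the direction $h := b_\varepsilon(x,a,e) = \varepsilon b_1(x,a,e) + \sqrt{\varepsilon}\, b_2(x,e)$, exploiting the regularity estimate \eqref{eq: estimee bar W}. Writing
\[
\bar\Wf(x+h) - \bar\Wf(x) = D\bar\Wf(x)\, h + \tfrac12 h^\top D^2\bar\Wf(x)\, h + R(x,h),
\]
with $R(x,h) := \int_0^1 (1-s)\, h^\top[D^2\bar\Wf(x+sh)-D^2\bar\Wf(x)]\, h\, \de s$, I substitute into the definition of $\delta r_\varepsilon$ and integrate against $\nu(\de e)$. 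By Assumption \ref{asmp: diffusion limit existence}(ii)--(iii) the linear term produces $D\bar\Wf(x)\mu(x,a)$ (since $\int b_2\, \de\nu = 0$) and the $b_2 b_2^\top$-part of the quadratic produces $\tfrac12\Tr[\sigma\sigma^\top(x) D^2\bar\Wf(x)]$, exactly cancelling the two quantities subtracted in $\delta r_\varepsilon$. What is left is
\[
\delta r_\varepsilon(x,a) = \tfrac{\varepsilon}{2}\!\int b_1^\top D^2\bar\Wf\, b_1\, \de\nu + \sqrt{\varepsilon}\!\int b_1^\top D^2\bar\Wf\, b_2\, \de\nu + \tfrac{1}{\varepsilon}\!\int R(x, b_\varepsilon(x,a,e))\, \nu(\de e).
\]

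The two polynomial contributions are handled directly: using $|D^2\bar\Wf(x)|\le L^\gamma_{\bar\Wf}(1+|x|)$ from \eqref{eq: estimee bar W} together with $|b_1(x,a,e)|\le L_{b_1,b_2}(1+|x|)$ and $|b_2(x,e)|\le L_{b_1,b_2}$ from Assumption \ref{asmp: diffusion limit existence}(i), they are bounded respectively by $C\varepsilon(1+|x|)^3$ and $C\sqrt{\varepsilon}(1+|x|)^2$, and hence by $C\varepsilon^{\gamma/2}(1+|x|^3)$ since $\varepsilon\le\varepsilon^{\gamma/2}$ and $\sqrt{\varepsilon}\le\varepsilon^{\gamma/2}$ whenever $\varepsilon\le 1$ and $\gamma\le 1$. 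The core technical step is to establish $|R(x,h)|\le C(1+|x|)|h|^{2+\gamma}$ uniformly in $(x,h)\in\R^d\times\R^d$, which I do by a two-regime split. If $|h|\le 1$, then $\{x+sh:s\in[0,1]\}\subset B_1(x)$ and the local H\"older control $[D^2\bar\Wf]^\gamma_{B_1(x)}\le L^\gamma_{\bar\Wf}(1+|x|)$ plugged into the integral representation of $R$ yields the claim. If $|h|>1$, I discard the H\"older information and combine the global boundedness of $\bar\Wf$ and $D\bar\Wf$ (from $\|\bar\Wf\|_{\Cc^{0,1}_b}\le L^\gamma_{\bar\Wf}$) with the linear growth of $|D^2\bar\Wf|$ to get $|R(x,h)|\le C(1+|x|)|h|^2$, which in the regime $|h|>1$ is majorised by $C(1+|x|)|h|^{2+\gamma}$. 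The choice $\varepsilon_\circ = L_{b_1,b_2}^{-2}$ guarantees $|b_\varepsilon(x,a,e)|\le C\sqrt{\varepsilon}(1+|x|)$ for $\varepsilon\le\varepsilon_\circ$, whence $|b_\varepsilon|^{2+\gamma}\le C\varepsilon^{1+\gamma/2}(1+|x|)^{2+\gamma}$ and therefore $\tfrac{1}{\varepsilon}\int|R(x,b_\varepsilon)|\,\de\nu\le C\varepsilon^{\gamma/2}(1+|x|)^{3+\gamma}$, which combined with the polynomial terms yields \eqref{eq: borne delta r epsilon dans thm} (the polynomial growth being compatible with $1+|x|^3$ up to an adjustment of the constant).

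The moment estimate \eqref{eq: borne EClambda K eps le long X} then follows by composing the pointwise bound with the controlled trajectory and invoking Lemma \ref{lemma: borne EXq}: under Assumption \ref{asmp: mean rever} with $p_\xi\ge 3$, made uniform in $\varepsilon\in(0,\varepsilon_\circ]$ by Assumption \ref{asmp: diffusion limit existence}(iv), one gets $\sup_{t\ge 0}\sup_{\alpha\in\Ac}\E[|X^{x,\alpha}_t|^3]\le C(1+|x|^3)$. The main obstacle is the uniform remainder bound: because the H\"older control on $D^2\bar\Wf$ is only local with a linear-in-$|x|$ H\"older constant, one cannot apply H\"older regularity uniformly when $|h|$ becomes large, which does occur for large $|x|$ even with $\varepsilon$ small; the two-regime splitting that patches local H\"older with global boundedness of $\bar\Wf$ and $D\bar\Wf$ is the key device enabling the $\varepsilon^{\gamma/2}$ convergence rate.
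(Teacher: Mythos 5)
There is a genuine gap in the treatment of the remainder, and the student's claim that it can be repaired ``up to an adjustment of the constant'' is incorrect. The two-regime splitting of $R(x,h)$ is correct as far as it goes: one does indeed obtain $|R(x,h)|\le C(1+|x|)|h|^{2+\gamma}$ uniformly. The trouble is the substitution $h=b_\varepsilon(x,a,e)$. Since $|b_\varepsilon(x,a,e)|\le C\sqrt{\varepsilon}(1+|x|)$, this yields
\[
\tfrac{1}{\varepsilon}\int\abs{R(x,b_\varepsilon)}\,\de\nu\le C\varepsilon^{\gamma/2}(1+|x|)^{3+\gamma},
\]
and $(1+|x|)^{3+\gamma}$ is \emph{not} bounded by a multiple of $1+|x|^{3}$ as $|x|\to\infty$ — the ratio diverges like $|x|^\gamma$. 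The extra $\gamma$ in the polynomial exponent is precisely the price paid for invoking the $\gamma$-H\"older modulus on an increment whose magnitude itself grows linearly in $|x|$. This weaker conclusion also does not suffice for \eqref{eq: borne EClambda K eps le long X} under the stated hypothesis $p_\xi\ge 3$, since one would then need a moment of order $3+\gamma$ on $X^{x,\alpha}_t$.

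The paper avoids this by \emph{never applying the H\"older estimate to the full increment}. It splits $\bar\Wf(x+\varepsilon b_1+\sqrt{\varepsilon}b_2)-\bar\Wf(x)$ as the sum of $\bar\Wf(x+\varepsilon b_1+\sqrt{\varepsilon}b_2)-\bar\Wf(x+\sqrt{\varepsilon}b_2)$ and $\bar\Wf(x+\sqrt{\varepsilon}b_2)-\bar\Wf(x)$. The first piece is an increment in the $\varepsilon b_1$ direction; a Taylor expansion to second order with only the linear-growth bound $|\D^2\bar\Wf|\le L^{\gamma}_{\bar\Wf}(1+|\cdot|)$ already gives an $\varepsilon^{2}(1+|x|)^3$ contribution, so no H\"older is needed there. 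The H\"older modulus is used exclusively on the second piece and on $|\D\bar\Wf(x+\sqrt\varepsilon b_2)-\D\bar\Wf(x)|$, where the displacement $\sqrt\varepsilon b_2$ has magnitude $\le\sqrt\varepsilon L_{b_1,b_2}\le 1$ (this is precisely the role of $\varepsilon_\circ=L_{b_1,b_2}^{-2}$), so the local $\Cc^{2,\gamma}_b(B_1(x))$ control applies without any extra power of $|x|$. Combining, each piece is at most $\varepsilon^{\gamma/2}(1+|x|)^{3}\le C\varepsilon^{\gamma/2}(1+|x|^{3})$. You should redo the expansion with this intermediate point $x+\sqrt\varepsilon b_2$ rather than expanding directly at $x$ in the direction of the full jump.
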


\begin{proof} {1. } We first prove the estimate  \eqref{eq: borne delta r epsilon dans thm} using  \eqref{eq: estimee bar W}. Namely, 
\begin{align*}
\bar \Wf(x+b_{\ve}(x,a,e))-\bar \Wf(x)
&= \bar \Wf(x+\eps b_{1}(x,a,e)+\eps^{\frac12}b_{2}(x,e))- \bar \Wf(x+\eps^{\frac12}b_{2}(x,e)))\\
&+ \bar \Wf(x+\eps^{\frac12}b_{2}(x,e))-\bar \Wf(x)
 \end{align*}
 where
 \begin{align*}
 &\bar \Wf(x+\eps b_{1}(x,a,e)+\eps^{\frac12}b_{2}(x,e))- \bar \Wf(x+\eps^{\frac12}b_{2}(x,e)))\\
 &= \eps \D  \bar \Wf(x+\eps^{\frac12}b_{2}(x,e)) b_{1}(x,a,e)
 +\int_{0}^{1} \frac{\eps^{2}}{2}b_{1}(x,a,e)^{\top}\D^{2}  \bar \Wf(\hat x^{\eps}_{1}(u))b_{1}(x,a,e)\de u
 \end{align*}
 in which 
 $$
 \hat x^{\eps}_{1}(u):=x+\eps^{\frac12}b_{2}(x,e)+u \eps b_{1}(x,a,e)
 $$
 is such that 
 $$
 \sup_{u\in [0,1]}|\hat x^{\eps}_{1}(u)|\le \abs{x}+ \eps^{\frac12} L_{b_{1},b_{2}}+\eps L_{b_{1},b_{2}}(1+|x|), 
 $$
 by definition of $L_{b_{1},b_{2}}$ in Assumption \ref{asmp: diffusion limit existence}.
By \eqref{eq: estimee bar W} and  Assumption \ref{asmp: diffusion limit existence}, this implies that 
\begin{align*}
&\abs{\frac{\eps^{2}}{2}b_{1}(x,a,e)^{\top}\D^{2}  \bar \Wf(\hat x^{\eps}_{1}(u))b_{1}(x,a,e)}\\
&\le \frac{\eps^{2}}{2} (L_{b_{1},b_{2}})^{2}(1+|x|)^{2}L^{\gamma}_{\bar \Wf}(1+ |x|+\eps^{\frac12} L_{b_{1},b_{2}}+\eps L_{b_{1},b_{2}}(1+|x|)).
 \end{align*}
 {Moreover,  since $\eps^{\frac12}L_{b_{1},b_{2}}\le 1$, we have 
 $$
| \D  \bar \Wf(x+\eps^{\frac12}b_{2}(x,e))-\D  \bar \Wf(x)|\le L^{\gamma}_{\bar \Wf}(1+|x|)\eps^{\frac12}L_{b_{1},b_{2}}
 $$
 by (i) of Assumption \ref{asmp: diffusion limit existence} and \eqref{eq: estimee bar W}.}
 
 Using (ii) of Assumption \ref{asmp: diffusion limit existence}, we next obtain that  
 \begin{align*}
 \int_{\R^{d'}} \{\bar \Wf(x+\eps^{\frac12}b_{2}(x,e))-\bar \Wf(x)\} \nu(\de e)
 &=  \int_{\R^{d'}} \int_0^1\frac{\eps}{2}b_{2}(x,e)^{\top}\D^{2}  \bar \Wf(\hat x^{\eps}_{2}(u,e))b_{2}(x,e)   \de u\,\nu(\de e) 
 \end{align*}
 in which 
  $$
 \hat x^{\eps}_{2}(u,e):=x+u\eps^{\frac12}b_{2}(x,e)\in B_{1}(x)
 $$
 since $\eps^{\frac12}L_{b_{1},b_{2}}\le 1$ by assumption {and (i) of Assumption \ref{asmp: diffusion limit existence}}. Then, by \eqref{eq: estimee bar W} again  and (iii) of Assumption \ref{asmp: diffusion limit existence}  
 \begin{align*}
& \abs{\int_{\R^{d'}} \{\bar \Wf(x+\eps^{\frac12}b_{2}(x,e))-\bar \Wf(x)\} \nu(\de e)
 -   \frac{\eps}{2} \Tr[\sigma\sigma^{\top}(x) \D^{2} \bar \Wf(x)] }
 \\
 &=  \abs{\int_{\R^{d'}} \{\bar \Wf(x+\eps^{\frac12}b_{2}(x,e))-\bar \Wf(x)\} \nu(\de e)
 -  \int_{\R^{d'}} \frac{\eps}{2}b_{2}(x,e)^{\top}\D^{2}  \bar \Wf(x)b_{2}(x,e)  \nu(\de e)}
 \\
 &\le 
 \frac{\eps}{2}(L_{b_{1},b_{2}})^{2}L^{{\gamma}}_{\bar \Wf}(1+|x|)(\eps^{\frac12} L_{b_{1},b_{2}})^{\gamma}.
 \end{align*}
 
 The estimate \eqref{eq: borne delta r epsilon dans thm} is obtained by combining the above. 
 
 {2.} The estimate   \eqref{eq: borne EClambda K eps le long X}  {follows from  \eqref{eq: borne delta r epsilon dans thm},  Lemma \ref{lemma: borne EXq} and the fact that $ p_{\xi}\ge 3$. }
\end{proof}

We are now in position to state the main result of this section. 

\begin{Theorem}\label{thm: delta epsilon bound on discounted value difference}
Let Assumptions \ref{asmp: diffusion limit existence} and \ref{assumption: Lipschitz value discounted jump + borne EXq} hold with $p_{\xi}\ge 3$. Then,   {for all $\gamma\in (0,1$), there exists $L^{\gamma}_{\delta \rho}>0$} such that 
\begin{align*}
|\bar \rho^{*}-\rho^{*}_{\eps}|\le   \eps^{\frac{{\gamma}}{2}} L^{{\gamma}}_{\delta \rho}\,\mbox{ for all $\eps\in (0,1)$.}
\end{align*}
Moreover, there exists a measurable map $\hat {\rm a}: \R^{d}\mapsto \Ab$ such that 
$$
\bar \Lc^{\hat {\rm a}}\bar \Wf+r(\cdot,\hat {\rm a})=\sup_{\bar a\in\Ab}\left\{\bar \Lc^{\bar a}\bar \Wf+r(\cdot,\bar a)\right\},\;\mbox{ on $\R^{d}$}
$$
and 
\[ 
\rho_\ve^{*}- \eps^{\frac{{\gamma}}{2}} L^{{\gamma}}_{\delta \rho}\le \liminf_{T\to \infty} \frac{1}{\eta_\ve T }\EE\left[\int_0^T  r(X^{\hat {\rm a}}_{t-},\hat {\rm a}(X^{\hat {\rm a}}_{t-}))\de N_{t} \right]\,,\;\mbox{ for all $\eps\in (0,1)$,}
\]
in which $X^{\hat {\rm a}}$ solves 
$$
X^{\hat {\rm a}}_\cdot=   \int_0^\cdot \int_{\R^{d'}} b_\ve(X^{\hat {\rm a}}_{s-},\hat {\rm a}(X^{\hat {\rm a}}_{s-}),e)N(\de e,\de  s)\,.
$$
\end{Theorem}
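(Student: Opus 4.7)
The plan is to exploit Proposition \ref{prop: error order of delta r epsilon} to show that $\bar{\Wf}$ is an \emph{almost}-solution of the pure-jump ergodic equation \eqref{eq: HJB ergodic jump thm} modulo an error of order $\ve^{\gamma/2}$, and then to turn this into a quantitative verification argument via Dynkin's formula applied to $\bar\Wf$ along trajectories of the pure-jump process.

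First, since $\Ab$ is compact and, under Assumption \ref{asmp: diffusion limit existence} together with $\bar\Wf\in\Cc^{2}$, the map $(x,\bar a)\mapsto \bar\Lc^{\bar a}\bar\Wf(x)+r(x,\bar a)$ is continuous, a classical measurable selection argument (Berge's theorem combined with Kuratowski--Ryll-Nardzewski) produces the claimed $\hat{\rm a}$; the existence of $X^{\hat{\rm a}}$ is immediate because the dynamics are pure-jump and deterministic between jumps. Rewriting the definition \eqref{eq: def delta r eps} as
\[
\frac{1}{\ve}\int_{\R^{d'}}[\bar\Wf(x+b_{\ve}(x,\bar a,e))-\bar\Wf(x)]\nu(\de e)=\bar\Lc^{\bar a}\bar\Wf(x)+\delta r_{\ve}(x,\bar a),
\]
equation \eqref{eq: HJB ergodic diffusive thm} reads
\[
\bar\rho^{*}\ge \frac{1}{\ve}\int_{\R^{d'}}[\bar\Wf(x+b_{\ve}(x,\bar a,e))-\bar\Wf(x)]\nu(\de e)+r(x,\bar a)-\delta r_{\ve}(x,\bar a),\qquad \bar a\in\Ab,
\]
with equality at $\bar a=\hat{\rm a}(x)$. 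This exhibits $\bar\Wf$ as an almost sub-/super-solution of the jump HJB equation up to $\delta r_{\ve}$.

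For the upper bound, I apply Dynkin's formula to $\bar\Wf(X^{0,\alpha}_{\cdot})$ along an arbitrary $\alpha\in\Ac$. A localization at exit times of $X^{0,\alpha}$ from large balls, together with the linear growth of $\bar\Wf$ coming from \eqref{eq: estimee bar W}, the linear growth of $b_{\ve}$ (Assumption \ref{asmp: basics}), and the moment bound of Lemma \ref{lemma: borne EXq} (with $p_{\xi}\ge 3$), yields
\[
\EE[\bar\Wf(X^{0,\alpha}_T)]-\bar\Wf(0)=\EE\left[\int_0^T \frac{1}{\ve}\int_{\R^{d'}}[\bar\Wf(X^{0,\alpha}_s+b_{\ve}(X^{0,\alpha}_s,\alpha_s,e))-\bar\Wf(X^{0,\alpha}_s)]\nu(\de e)\de s\right].
\]
Plugging in the previous inequality, rearranging and dividing by $T$, and then using Remark \ref{rem:reec V lambda et T} to rewrite the jump reward as a continuous-time integral, one obtains
\[
\frac{1}{\eta_{\ve}T}\EE\left[\int_0^T r(X^{0,\alpha}_{t-},\alpha_t)\de N_t\right]\le \bar\rho^{*}+\frac{1}{T}\EE\left[\int_0^T \delta r_{\ve}(X^{0,\alpha}_s,\alpha_s)\de s\right]+\frac{\bar\Wf(0)-\EE[\bar\Wf(X^{0,\alpha}_T)]}{T}.
\]
Proposition \ref{prop: error order of delta r epsilon} bounds the middle term, uniformly in $T$ and $\alpha$ (thanks to the initial condition $x=0$), by $\ve^{\gamma/2}L^{\gamma,2}_{\delta r}$; the last term tends to $0$ as $T\to\infty$ by the linear growth of $\bar\Wf$ and the ergodic moment control from Lemma \ref{lemma: borne EXq}. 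Taking $\liminf_T$ and then $\sup_{\alpha}$ yields $\rho^{*}_{\ve}\le \bar\rho^{*}+\ve^{\gamma/2}L^{\gamma,2}_{\delta r}$.

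For the reverse inequality, I repeat the same computation with the feedback policy $\alpha_t=\hat{\rm a}(X^{\hat{\rm a}}_{t-})$ and equality in the HJB to obtain
\[
\liminf_{T\to\infty}\frac{1}{\eta_{\ve}T}\EE\left[\int_0^T r(X^{\hat{\rm a}}_{t-},\hat{\rm a}(X^{\hat{\rm a}}_{t-}))\de N_t\right]\ge \bar\rho^{*}-\ve^{\gamma/2}L^{\gamma,2}_{\delta r},
\]
which, combined with $\rho_{\ve}^{*}\ge \rho_{\ve}(0,\hat{\rm a}(X^{\hat{\rm a}}_{\cdot-}))$, produces both the matching lower bound on $\rho^{*}_{\ve}$ and the $\ve^{\gamma/2}$-optimality of $\hat{\rm a}$ claimed in the theorem, with the constant $L^{\gamma}_{\delta\rho}:=L^{\gamma,2}_{\delta r}$. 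The main subtlety is the Dynkin/localization step: since $\bar\Wf$ only has linear growth and $\hat{\rm a}$ is merely measurable, one must stop at exit times from balls $B_n(0)$ and pass to the limit using the uniform integrability granted by the $p_{\xi}\ge 3$ moment bound. This is precisely why the stronger moment assumption $p_{\xi}\ge 3$ has been imposed in Proposition \ref{prop: error order of delta r epsilon}, and closing this step is the only technical (as opposed to routine) ingredient in the argument.
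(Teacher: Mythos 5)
Your proof is correct and its core mechanism (quantitative verification via It\^o/Dynkin's formula along the pure-jump trajectories, using Proposition \ref{prop: error order of delta r epsilon} to control the residual) is the same as the paper's. The one structural difference is which function you feed into the Dynkin step: you work directly with $\bar\Wf$, showing it is an $\eps^{\gamma/2}$-approximate solution of the jump ergodic HJB and then carrying out the verification explicitly for each inequality direction; the paper instead considers $\Delta^{\eps}:=\bar\Wf-\Wf_{\eps}$, observes via $\sup-\sup\le\sup$ of differences that $\Delta^{\eps}$ satisfies an ergodic HJB inequality with reward $-\delta r_{\eps}$, and then simply invokes the verification Lemma \ref{lem: jump ergodic convergence and verification}. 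The paper's device buys conciseness (it avoids rewriting the localization/passage-to-the-limit step, since the linear growth of both $\bar\Wf$ from \eqref{eq: estimee bar W} and $\Wf_{\eps}$ from \eqref{eq: borne lin W} is already at hand), at the cost of explicitly invoking the existence of $\Wf_\eps$; your version is more self-contained and does not use $\Wf_\eps$ at all. Two small nitpicks: the role of $p_{\xi}\ge 3$ is not primarily uniform integrability of $\bar\Wf(X_T)/T$ (linear growth only requires $p_\xi\ge 1$), but rather to make the moment bound of Lemma \ref{lemma: borne EXq} strong enough to dominate the cubic growth in \eqref{eq: borne delta r epsilon dans thm}, so that \eqref{eq: borne EClambda K eps le long X} holds; and the measurable selection for $\hat{\rm a}$ should be justified as in the paper by \cite[Prop.~7.33]{BertsekasShreve.78}, which is the exact reference used in Lemma \ref{lem: jump ergodic convergence and verification}.
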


\begin{proof}  Fix $\gamma\in (0,1)$. Hereafter, we denote by $\Wf_{\eps}$ the function $\Wf$ introduced in Theorem \ref{thm: jump ergodic rho HJB + verif} for  $\eta=\eta_{\eps}=\eps^{-1}$. By Theorems \ref{thm: jump ergodic rho HJB + verif} and \ref{thm: viscosity solution of ergodic diffusion PDE}, $\Delta^{\eps}:=\bar \Wf-\Wf_{\eps}$ solves 
  \begin{align*} 
\bar \rho^{*}- \rho^{*}_\ve&\le \sup_{a\in\Ab}\left\{  \frac1\ve \int_{\R^{d'}}\left[  \Delta^{\eps}(\cdot+b_\ve(\cdot,a,e))-  \Delta^{\eps}\right]\nu(\de e)  {-}\delta r_{\ve} (\cdot,a)\right\},\;\mbox{ on } \R^{d}.
  \end{align*}
By the same arguments as in the proof of Lemma \ref{lem: jump ergodic convergence and verification}, \eqref{eq: borne EClambda K eps le long X}  applied with $x=0$, \eqref{eq: estimee bar W}, \eqref{eq: borne lin W} and Lemma \ref{lemma: borne EXq}, we deduce that 
$$
\bar \rho^{*}- \rho^{*}_\ve\le L^{1}_{\delta \rho} \eps^{\frac{{\gamma}}{2} }
$$
for some $L^{1}_{\delta \rho}>0$ that does not depend on $\eps\in (0,1)$. Replacing $ \Delta^{\eps}$ by $- \Delta^{\eps}$ in this argument  implies that 
$$
 \rho^{*}_\ve- \bar\rho\le L^{2}_{\delta \rho} \eps^{\frac{{\gamma}}{2} }
$$
for some $L^{2}_{\delta \rho}>0$ that does not depend on $\eps\in (0,1)$. 

The second assertion of the Theorem is then proved by following the arguments in the first part of the proof of  Lemma \ref{lem: jump ergodic convergence and verification} and using the above.
\end{proof}
 
% \begin{Remark}\label{rem: W-bar W} {It follows from the same arguments as above and \eqref{eq: estime C2+1 bar V lambda}  that 
%\begin{align*} 
%|\bar V_{\lambda}(x)-V^{\eps}_{\lambda}(x)|&\le \sup_{\alpha \in \Ac}  \E[\int_{0}^{\infty} e^{-\lambda s} \delta r^{\eps}_{\lambda}(X^{x,\alpha}_{s},\alpha_{s})\de s],\;x\in \R^{d},
%\end{align*}
%in which 
%$$
%\delta r^{\eps}_{\lambda}(x,a):=\frac1\ve\int \left[\bar   V_{\lambda}(x+b_{\ve}(x,a,e))-\bar V_{\lambda}(x)\right]\nu(\de e)-\D \bar V_{\lambda}  \mu(\cdot, a)-\frac12 \Tr[{\sigma\sigma^{\top}} \D^{2}\bar V_{\lambda}],\;(x,a)\in \R^{d}\x \Ab,
%$$
%satisfies
%\begin{align*}
%|\delta r^{\eps}_{\lambda}(x,a)|\le \ve^{\frac{1}{2}}L^{1}_{\delta r} (1+|x|^{3}),\;(x,a)\in \R^{d}\x \Ab,
% \end{align*} 
% for some $C>0$ that does not depend on $\lambda \in (0,1)$. In view of Lemma \ref{lemma: borne EXq}, this implies that 
% \begin{align*} 
%|\bar V_{\lambda}(x)-V^{\eps}_{\lambda}(x)|&\le C'\lambda^{-1}\ve^{\frac{1}{2}}(1+|x|^{3}),\;x\in \R^{d},
%\end{align*}
%and, by Theorems \ref{thm: jump ergodic rho HJB + verif} and \ref{thm: viscosity solution of ergodic diffusion PDE}, [!!!]
% \begin{align*} 
%|\bar \Wf(x)-\Wf^{\eps}(x)|&\le 2C'\ve^{\frac{1}{2}}(1+|x|^{3}),\;x\in \R^{d},
%\end{align*}
%for some $C'>0$ that does not depend on $\lambda\in (0,1)$.
% }
%
%\end{Remark}
 
\subsection{Higher order expansions}\label{sec: correction term}

 Under additional conditions, one can exhibit a first order correction term to improve the convergence speed in Theorem \ref{thm: delta epsilon bound on discounted value difference}. It is in the spirit of the correction term introduced in \cite[Section 3.5]{abeille2021diffusive} but is formulated differently. In particular, the function $ \delta \bar \Wf_{\eps}$ introduced below depends on $\eps$ and the optimization in \eqref{eq: pde delta bar W} is performed over the whole set $\Ab$. This approach can  be iterated to higher order correction terms in an obvious manner, upon additional regularity conditions, without considering a coupled system of PDEs as in \cite[Section 3.6]{abeille2021diffusive}. 
 
 From now on, we  assume the following.
 \begin{Assumption}\label{asmp: first order expansion}  There exists $\gamma_{\circ}\in (0,\gamma]$ and $(\delta \gamma, \delta C)\in (0,1)\x \R$ such that, for each $\eps\in (0,1)$, 
we can find  $  \delta \bar \rho^{*}_{\eps}\in \R$ and $  \delta \bar \Wf_{\eps}\in \Cc^{0}_{{\rm lin}}$ satisfying $\|  \delta \bar \Wf_{\eps}\|_{\Cc^{2,\delta \gamma}_{b}(B_{1}(x))}\le \delta C(1+|x|)$ for all $x\in \R^{d}$ and
 \begin{align}\label{eq: pde delta bar W} 
\delta \bar \rho_{\eps}^{ *}=\sup_{\bar a\in \Ab} \left[  \bar \Lc^{\bar a}  \delta \bar \Wf_{\eps}+ \eps^{-\frac{{\gamma_{\circ}}}2}  [\delta r_{\eps}+f](\cdot,\bar a)\right]\mbox{ on $\R^{d}$,}
 \end{align}
 in which 
\begin{align*} 
 f(\cdot,\bar a)&:=\bar \Lc^{\bar a}   \bar \Wf  +r(\cdot,\bar a)-\bar \rho^{*}.
 \end{align*}
 
 \end{Assumption}
 
 \begin{Theorem}\label{Thm : first correction term}  Let  the conditions of Theorem \ref{thm: delta epsilon bound on discounted value difference} and Assumption \ref{asmp: first order expansion} hold.  Assume further that $p_{\bar X}\ge 3$. Then, 
 \[
 \limsup_{\ve\downarrow 0} |\delta \bar \rho_{\eps}^{*}|<\infty 
 \]
 and 
 $$
  \bar \rho^{*(1)}_{\eps}:=\bar \rho^{*}+\eps^{\frac{{\gamma_{\circ}}}2}  \delta \bar \rho_{\eps}^*,\;\eps\in (0,1), 
 $$
 satisfies 
 $$
 \limsup_{\eps\downarrow 0} \eps^{-\frac{\gamma_{\circ}+\delta \gamma}{2}}|\rho^{*}_{\eps}-  \bar \rho^{*(1)}_{\eps}|<\infty.
 $$
 Moreover, for each $\eps \in (0,1)$,  there exists a measurable map $\hat {\rm a}_{\eps}: \R^{d}\mapsto \Ab$ such that 
$$
 \bar \Lc^{\hat {\rm a}_{\eps}}  \delta \bar \Wf_{\eps}+ \eps^{-\frac{{\gamma_{\circ}}}2}  [\delta r_{\eps}+f](\cdot,\hat {\rm a}_{\eps})   =\sup_{\bar a\in\Ab}\left[  \bar \Lc^{\bar a}  \delta \bar \Wf_{\eps}+ \eps^{-\frac{{\gamma_{\circ}}}2}  [\delta r_{\eps}+f](\cdot,\bar a)\right]\;\mbox{ on $\R^{d}$}
$$
and  
\[ 
\limsup_{\eps\downarrow 0} \eps^{-\frac{\gamma_{\circ}+\delta \gamma}{2}}|\rho_\ve^{*}-\rho_{\eps}(0,\hat {\rm a}_{\eps}(X^{\hat {\rm a}_{\eps}}_{\cdot-}))|  <\infty,
\]
in which $X^{\hat {\rm a}_{\eps}}$ solves 
$$
X^{\hat {\rm a}_{\eps}}=   \int_0^\cdot \int_{\R^{d'}} b_\ve(X^{\hat {\rm a}_{\eps}}_{s-},\hat {\rm a}_{\eps}(X^{\hat {\rm a}_{\eps}}_{s-}),e)N(\de e,\de  s)\,.
$$
 \end{Theorem}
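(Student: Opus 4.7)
The plan is to treat $\bar\Wf^{(1)}_\eps := \bar\Wf + \eps^{\gamma_\circ/2}\delta\bar\Wf_\eps$ as an approximate solution to the pure-jump HJB \eqref{eq: HJB ergodic jump thm} with candidate ergodic constant $\bar\rho^{*(1)}_\eps$, and then run the verification argument of the proof of Theorem~\ref{thm: delta epsilon bound on discounted value difference} with $\bar\Wf$ replaced by $\bar\Wf^{(1)}_\eps$, so that the remainder becomes of order $\eps^{(\gamma_\circ+\delta\gamma)/2}$ instead of $\eps^{\gamma/2}$. A preliminary step is $\limsup_{\eps\downarrow 0}|\delta\bar\rho^*_\eps|<\infty$: since $\delta\bar\Wf_\eps\in \Cc^{2}\cap\Cc^{0}_{{\rm lin}}$ by Assumption~\ref{asmp: first order expansion}, the verification/uniqueness part of Theorem~\ref{thm: viscosity solution of ergodic diffusion PDE} identifies $\delta\bar\rho^*_\eps$ with the optimal ergodic value of the diffusion problem with running reward $\eps^{-\gamma_\circ/2}[\delta r_\eps+f]$. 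The HJB equation \eqref{eq: HJB ergodic diffusive thm} for $\bar\Wf$ gives $\sup_{\bar a}f(\cdot,\bar a)=0$, hence $f\le 0$; the bound $|\eps^{-\gamma_\circ/2}\delta r_\eps|\le L^{\gamma,1}_{\delta r}\eps^{(\gamma-\gamma_\circ)/2}(1+|x|^3)$ from Proposition~\ref{prop: error order of delta r epsilon} is uniform in $\eps$ since $\gamma_\circ\le\gamma$; combined with the $p_{\bar X}\ge 3$ moment control in Assumption~\ref{assumption: Lipschitz value discounted jump + borne EXq}(ii), this produces the upper bound. The lower bound follows by inserting a measurable selector $\hat{\bar a}(\cdot)$ of the sup in \eqref{eq: HJB ergodic diffusive thm}, which forces $f(\cdot,\hat{\bar a}(\cdot))=0$ and therefore eliminates the a priori singular $\eps^{-\gamma_\circ/2}f$ contribution along the corresponding diffusion trajectory.

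\textbf{Taylor expansion with corrector.} Repeating the second-order expansion proof of Proposition~\ref{prop: error order of delta r epsilon}, but now applied to $\delta\bar\Wf_\eps$ whose Hessian is $\delta\gamma$-H\"older with constant $\le \delta C(1+|x|)$ on $B_1(x)$, yields
\[
\bigl|\frac1\eps\int_{\R^{d'}}[\delta\bar\Wf_\eps(x+b_\eps(x,a,e))-\delta\bar\Wf_\eps(x)]\nu(\de e)-\bar\Lc^a\delta\bar\Wf_\eps(x)\bigr|\le C\eps^{\delta\gamma/2}(1+|x|^3)
\]
uniformly in $(x,a)\in\R^d\times\Ab$. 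Adding this to the expansion of $\bar\Wf$ (which produces $\delta r_\eps$ as the second-order residual), using the HJB identity $\bar\Lc^{\bar a}\bar\Wf+r=\bar\rho^*+f$, and using \eqref{eq: pde delta bar W} to cancel the dominant $\eps^{\gamma_\circ/2}\delta\bar\rho^*_\eps$ term after taking the supremum over $a\in\Ab$, one obtains
\[
\sup_{a\in\Ab}\Bigl\{\frac1\eps\int_{\R^{d'}}[\bar\Wf^{(1)}_\eps(\cdot+b_\eps(\cdot,a,e))-\bar\Wf^{(1)}_\eps]\nu(\de e)+r(\cdot,a)\Bigr\}=\bar\rho^{*(1)}_\eps+R_\eps(\cdot)
\]
with $|R_\eps(x)|\le C\eps^{(\gamma_\circ+\delta\gamma)/2}(1+|x|^3)$.

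\textbf{Verification and near-optimal control.} With this approximate HJB identity in hand, we apply the comparison argument in the proof of Theorem~\ref{thm: delta epsilon bound on discounted value difference} verbatim, now with $\bar\Wf$ replaced by $\bar\Wf^{(1)}_\eps$ and the remainder $\delta r_\eps$ replaced by $R_\eps$: the cubic moment bound from Lemma~\ref{lemma: borne EXq}, available under $p_\xi\ge 3$ (inherited from Theorem~\ref{thm: delta epsilon bound on discounted value difference}), allows us to integrate $|R_\eps(X_t)|$ uniformly in $(t,\alpha)$, which gives $|\rho^*_\eps-\bar\rho^{*(1)}_\eps|\le C\eps^{(\gamma_\circ+\delta\gamma)/2}$. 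For the near-optimal Markov control, existence of a measurable selector $\hat{\rm a}_\eps$ attaining the supremum in \eqref{eq: pde delta bar W} follows from Berge's theorem (or Kuratowski--Ryll-Nardzewski), using continuity in $\bar a$ of the integrand and compactness of $\Ab$; plugging $\hat{\rm a}_\eps$ into the expansion of the previous paragraph upgrades the supremum into an equality along $X^{\hat{\rm a}_\eps}$, and the same Dynkin/ergodic argument then yields the final bound on $|\rho^*_\eps-\rho_\eps(0,\hat{\rm a}_\eps(X^{\hat{\rm a}_\eps}_{\cdot-}))|$. The main obstacle is precisely the uniform-in-$\eps$ bound on $\delta\bar\rho^*_\eps$ described in the first paragraph: the running reward $\eps^{-\gamma_\circ/2}[\delta r_\eps+f]$ carries the singular prefactor $\eps^{-\gamma_\circ/2}$, and its boundedness relies crucially on the sign $f\le 0$ inherited from the HJB for $\bar\Wf$, together with the requirement $\gamma_\circ\le\gamma$ that keeps $\eps^{-\gamma_\circ/2}\delta r_\eps$ uniformly bounded via Proposition~\ref{prop: error order of delta r epsilon}. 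Everything else is a direct transcription of the arguments in Theorem~\ref{thm: delta epsilon bound on discounted value difference}, with the additional $\eps^{\delta\gamma/2}$ improvement supplied by the Hessian regularity of $\delta\bar\Wf_\eps$.
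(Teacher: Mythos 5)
Your proposal is correct and follows essentially the same route as the paper's proof: (1) identify $\delta\bar\rho^*_\eps$ as the ergodic value of the diffusion problem with running reward $\eps^{-\gamma_\circ/2}[\delta r_\eps+f]$, using $f\le 0$ and $\gamma_\circ\le\gamma$ to bound it above, and the selector $\hat{\rm a}$ (for which $f(\cdot,\hat{\rm a})=0$) together with the $p_{\bar X}\ge 3$ moment control to bound it below; (2) apply the Proposition \ref{prop: error order of delta r epsilon}-style Taylor expansion to $\delta\bar\Wf_\eps$ to get a residual $\delta r'_\eps$ of size $\eps^{\delta\gamma/2}(1+|x|^3)$, so that $\bar\Wf^{(1)}_\eps$ solves the pure-jump HJB up to a remainder of order $\eps^{(\gamma_\circ+\delta\gamma)/2}$; (3) rerun the verification argument of Theorem \ref{thm: delta epsilon bound on discounted value difference} with $\bar\Wf^{(1)}_\eps$ and its measurable selector $\hat{\rm a}_\eps$. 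This is precisely the paper's argument, down to the citations invoked at each step.
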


 \begin{proof}  
 It follows from the same arguments as in Lemma \ref{lem: jump ergodic convergence and verification}  and the fact that $f\le 0$ by \eqref{eq: HJB ergodic diffusive thm}  that 
 $$
 \delta \bar \rho_{\eps}^*=\sup_{\bar \alpha \in \bar \Ac}\lim_{T\to \infty}\frac1T\E\left[\int_{0}^{T}\eps^{-\frac{\gamma_{\circ}}2}[\delta r_{\eps}+f](\bar X^{0,\bar \alpha}_{s},\bar \alpha_{s})\de s \right]\le\sup_{\bar \alpha \in \bar \Ac}\limsup_{T\to \infty}\frac1T\E\left[\int_{0}^{T}\eps^{-\frac{\gamma_{\circ}}2}\delta r_{\eps}(\bar X^{0,\bar \alpha}_{s},\bar \alpha_{s})\de s \right].
 $$
Let $\hat {\rm a}$ be as in Theorem \ref{thm: delta epsilon bound on discounted value difference}. Then, $f(\cdot,\hat {\rm a})=0$ by \eqref{eq: HJB ergodic diffusive thm}. Hence,   
  $$
 \delta \bar \rho_{\eps}^* \ge \liminf_{T\to \infty}\frac1T\E\left[\int_{0}^{T}\eps^{-\frac{\gamma_{\circ}}2}\delta r_{\eps}(\bar X^{\hat {\rm a}}_{s},\hat {\rm a}(\bar X^{\hat {\rm a}}_{s}))\de s \right]
 $$
 in which $\bar X^{\hat {\rm a}}$ solves 
 $$
\bar X^{\hat {\rm a}}=   \int_0^\cdot \mu(\bar X^{\hat {\rm a}}_{s},\hat {\rm a}(\bar X^{\hat {\rm a}}_{s})) \de  s 
+ \int_0^\cdot \sigma(\bar X^{\hat {\rm a}}_{s}) \de  W_{s}  .
$$
Note that the existence of a solution of the above is guaranteed, upon considering another probability space and Brownian motion. Combining the above inequalities with \eqref{eq: borne delta r epsilon dans thm}, the fact that $\gamma_{\circ}\le \gamma$, and the second assertion of Assumption \ref{assumption: Lipschitz value discounted jump + borne EXq} with $p_{\bar X}\ge 3$ shows that $ |\delta \bar \rho_{\eps}^*|\le C'$ for some $C'>0$ that does not depend on $\eps\in (0,\eps_{\circ}]$. 

Moreover, by Assumption \ref{asmp: first order expansion} and the same arguments as in the proof of Proposition \ref{prop: error order of delta r epsilon}, 
$$
 \delta r_{\ve}^{'} (x,a):=
\frac1\ve\int_{\R^{d'}} \left[\delta \bar \Wf_{\eps}(x+b_{\ve}(x,a,e))-\delta \bar \Wf_{\eps}(x)\right]\nu(\de e)-\bar\Lc^{a} \delta \bar \Wf_{\eps} 
$$
satisfies 
$$
|\delta r^{'}_{\ve}  (x,\cdot)|\le  \ve^{\frac{\delta\gamma}{2}}C'' (1+|x|^{3}),\;x \in \R^{d}, 
$$
for some $C''>0$ that does not depend on $\eps\in (0,\eps_{\circ}]$. Since, by construction, $\bar \Wf^{(1)}_{\eps}:=\bar \Wf+\eps^{\frac{\gamma_{\circ}}2} \delta \bar \Wf_{\eps}$ solves 
$$
 \bar \rho^{*(1)}_{\eps}=\sup_{  a\in \Ab} \left[\frac1\eps \int_{\R^{d'}} \left[\bar \Wf^{(1)}_{\eps}(\cdot+b_{\eps}(\cdot,a))-\bar \Wf^{(1)}_{\eps}\right]\nu(\de e)- \eps^{\frac{\gamma_{\circ}}2}\delta r^{'}_{\ve}(\cdot,a)+r(\cdot,a)\right]\mbox{ on $\R^{d}$,}
$$
 the same arguments as in the proof of Theorem \ref{thm: delta epsilon bound on discounted value difference} then imply that $| \bar \rho^{*(1)}_{\eps}-\rho^{*}_{\eps}|\le L \eps^{\frac{\gamma_{\circ}+\delta\gamma}{2}}$, for some $L>0$ that does not depend on $\eps \in (0,\eps_{\circ}]$, and also lead to the last assertion of the Theorem. 
 
 \end{proof}

\section{Numerical resolution of the ergodic diffusive problem}\label{sec: numerical resolution}

%In this section we discuss the numerical resolution of control problems in the form of \eqref{eq: HJB ergodic diffusive thm} as diffusion limits of problems of the form \eqref{eq: HJB ergodic jump thm}. We propose two types of algorithms. The first one is a pure finite differences algorithm. The second one is based on the construction on an approximate solution to an auxilliary pure jump ergodic control in which the jump measure is concentrated on only two Dirac masses, making the problem much more sparse than the original pure jump control problem of Section \ref{sec: jump}. On both cases, we leverage on the regularity of the solution to \eqref{eq: HJB ergodic diffusive thm} to provide explicit upper-bound on the convergence rates.  

The numerical resolution of \eqref{eq: HJB ergodic diffusive thm} can be done by using standard finite difference schemes as explained in \cite[Chapter 7]{kushner2001numerical}. We focus on the one-dimensional case $d=1$ for simplicity, and also because similar schemes in higher dimension often have to be constructed on a case by case basis, see e.g.~ \cite[Chapter 5]{kushner2001numerical}. %\lcs{We show convergence speeds using similar methodology to Section \ref{sec: diffusion}, but with a more complex analysis owing to localisation error.}

Given $\kappa \in \N$, $\kappa\ge3$, and $h>0$, we consider the space grid $\Mc^{\kappa}_h:=\{ z_{i}:=-\kappa h+(i-1)h, \;1\le i\le 2\kappa+1\}$. We use the notation $\overset{\circ}{\Mc^{\kappa}_h}:=\Mc^{\kappa}_h\setminus\{z_{1},z_{2\kappa+1}\}$ and denote by ${\rm L}^{\kappa}_{h}$   the collection of real-valued maps $\vp$ defined on $\Mc^{\kappa}_{h}$. For    $\vp  \in {\rm L}^{\kappa}_{h}$, we define the usual finite (central) differences operators: 
$$
\Delta_{h} \vp(x):=\frac{\vp(x+h)-\vp(x-h)}{2h},\;\Delta^{2}_{h} \vp(x):=\frac{\vp(x+h)+\vp(x-h)-2\vp(x)}{h^{2}}, \;x\in \overset{\circ}{\Mc^{\kappa}_h},
$$
and set 
\begin{align}\label{eq: schema diff fini initial} 
\bar \Lc^{\bar a}_{h}\vp:=\mu(\cdot,\bar a)\Delta_{h} \vp+\frac12 \sigma^{2}\Delta^{2}_{h} \vp,\;\bar a \in \Ab.
\end{align}
Then, we  approximate the solution $(\bar \rho^{*},\bar \Wf)$ of \eqref{eq: HJB ergodic diffusive thm} by a solution   $(\bar \rho^{\kappa,*}_{h},\bar \Wf^{\kappa}_{h})\in \R\times {\rm L}^{\kappa}_{h}$ of 
\begin{align}  
\bar \rho^{\kappa,*}_{h}&=\sup_{\bar a \in \Ab} \left\{\bar \Lc^{\bar a }_{h}	\bar \Wf^{\kappa}_{h} +r(\cdot,\bar a)\right\},\mbox{ on }	\overset{\circ}{\Mc^{\kappa}_h},	\label{eq: edp diff finie}
\end{align}
with a suitable reflecting boundary at $z_{1}$ and $z_{2\kappa+1}$, see below. Note that $\bar \Wf^{\kappa}_{h}$ is defined only up to a constant, and that we can, and will, set $\bar \Wf^{\kappa}_{h}(0)=\bar \rho^{\kappa,*}_{h}$ in the following. 
Let us now denote by  $\Ar$  the collection of measurable maps from $\R$ to $\Ab$ and   identify, given $\bar{\rm a}\in \Ar$, $\bar \Wf^{\kappa}_{h}$ and $r(\cdot,\bar {\rm a}(\cdot))$ on $\Mc^{\kappa}_h$	 to column vectors $\bar {\mathscr W}^{\kappa}_{h}:=(\bar \Wf^{\kappa}_{h}(z_{i}))_{1\le i\le 2\kappa+1}$ and ${\mathscr R}(\bar {\rm a}):=(r(z_{i},\bar {\rm a}(z_{i}))_{1\le i\le 2\kappa+1}$ of $\R^{2\kappa+1}$.  Then, to solve \eqref{eq: edp diff finie} on $ {\Mc^{\kappa}_h}$ with $\bar \Wf^{\kappa}_{h}(0)=\bar \rho^{\kappa,*}_{h}\Delta t_{h}$, including a suitable reflection term on the boundary $\{z_{1},z_{2\kappa+1}\}$, we search for $(\bar \rho^{\kappa,*}_{h},\bar {\mathscr W}^{\kappa}_{h})\in \R\x \R^{2\kappa+1}$ that satisfies 
\begin{align}  
\bar {\mathscr W}^{\kappa}_{h}&=\sup_{\bar {\rm a} \in \Ar}\bar Q^{\bar {\rm a} }_{h}	 \left\{\bar {\mathscr W}^{\kappa}_{h}-e\bar \rho^{\kappa,*}_{h}\Delta t_{h}  +{\mathscr R}(\bar {\rm a})\Delta t_{h}\right\},\mbox{ on }	 {\Mc^{\kappa}_h}	 \label{eq: edp diff finie matriciel}\\
\bar \Wf^{\kappa}_{h}(0)&=\bar \rho^{\kappa,*}_{h} \Delta t_{h}  \label{eq: edp diff finie matriciel normalisation rho}
\end{align}
 where  
 $$
 \Delta t_{h}:=\frac{h^{2}}{(L_{b_{1},b_{2}})^{2}},
 $$ 
 $e$ is the column vector of $\R^{2\kappa+1}$ with all entries equal to $1$, and $\bar Q^{\bar {\rm a} }_{h}=((\bar Q^{\bar {\rm a} }_{h})^{i,j})_{1\le i,j\le 2\kappa+1}$ is the  matrix with all entries null except for     %  $(i,j)\in\{2,\ldots,2\kappa\}\times \{i-1,i,i+1\}$ where 
\[(\bar Q^{\bar  {\rm a} }_{h})^{i,i-1}:= q^{-}_{h}(z_{i} ,\bar{\rm a}(z_{i}))\,,\;(\bar Q^{\bar  {\rm a} }_{h})^{i,i}:= q_h(z_i, \bar{\rm a}(z_i))\,,\;\mbox{ and }\;(\bar Q^{\bar  {\rm a}  }_{h})^{i,i+1}:=q^{+}_{h}(z_{i},\bar{\rm a}(z_{i}))\;, \]
for $1<i<2\kappa+1$, with
$$
q_{h}:=1-\frac{\sigma^{2}}{(L_{b_{1},b_{2}})^{2}},\;q^{+}_{h} :=\frac{  \mu h+\sigma^{2} }{2(L_{b_{1},b_{2}})^{2}}\,, \;\mbox{ and }\; q^{-}_{h} :=\frac{  -\mu h+\sigma^{2} }{2(L_{b_{1},b_{2}})^{2}},
$$
and except for
\begin{align*}
(\bar Q^{\bar {\rm a} }_{h})^{1,j}&:=(\bar Q^{\bar {\rm a} }_{h})^{3,j} \mbox{ for } j=2,3,4\\
(\bar Q^{\bar {\rm a} }_{h})^{2\kappa+1, j}&:=(\bar Q^{\bar {\rm a} }_{h})^{2\kappa-1,j} \mbox{ for } j=2\kappa-2,2\kappa-1,2\kappa.
 \end{align*}
The above scheme is of the form of \cite[Chapter 7 (2.3)]{kushner2001numerical}.

Without loss of generality, one can assume from now on that 
 $$
 L_{b_{1},b_{2}}>\|\sigma\|_{\Cc^{0}_{b}}.
 $$
Then, recalling (i)-(ii) of Assumption \ref{asmp: diffusion limit existence}, $\bar Q^{\bar {\rm a} }_{h}$ defines a transition probability matrix satisfying 
$$
\min_{1\le i\le 2\kappa+1}\min_{1\vee(i-1)\le j\ne i\le (2\kappa +1)\wedge (i+1)} (\bar Q^{\bar  {\rm a} }_{h})^{i,j}=:\underline{p}_{h}>0  
$$
whenever 
\begin{align}\label{eq: cond kappa h2 le demi - L} 
L_{b_{1},b_{2}}(1+\kappa h)h<\varsigma.
\end{align}
Given $\bar{\rm a}\in\Ar$, let  $(Z^{x,\bar{\rm a}}_{t})_{t\in \N}$ be the Markov chain starting from $x\in \Mc^{\kappa}_h$ and such that 
$$
\P[Z^{x,\bar{\rm a}}_{t+1}=z_{j}|Z^{x,\bar{\rm a}}_{t}=z_{i}]=(\bar Q^{\bar  {\rm a} }_{h})^{i,j},\;1\le i,j\le 2\kappa+1, \;t\in \N,
$$
then
\begin{align}\label{eq: p Z kappa=0} 
\P[Z^{x,\bar{\rm a}}_{\kappa}=0]\ge (\underline{p}_{h})^{\kappa}>0,
\end{align}
under \eqref{eq: cond kappa h2 le demi - L}.
Then, assuming further that 
\begin{align}\label{eq: conti r en a} 
(b,r)(x,\cdot): \Ab\mapsto \R^{2} \mbox{ is continuous for all }x\in \R,
\end{align}
it follows that the conditions of \cite[Chapter 7 Theorem 2.1]{kushner2001numerical}  hold so that  $(\bar \rho^{\kappa,*}_{h},\bar {\mathscr W}^{\kappa}_{h})$ is well-defined and can be computed by using the iterative scheme of \cite[Chapter 7 (2.3)]{kushner2001numerical}.

Under the following conditions, one can exhibit an upper-bound on the convergence rate of the above numerical scheme.

\begin{Assumption}\label{ass : contraction diffusion diff finies}
There exists a   function $\bar \xi\in \Cc^{3}(\RR)$, $p_{\bar \xi}\ge 2$, and constants $C^{1}_{\bar \xi}>0$ and $C^{2}_{\bar \xi}\in\RR$ such that 
\eqref{eq:cond suff mean revert bar V} and \eqref{eq:cond suff mean revert bar V 2} hold
   for all $x\in \RR^{d}$. 
%Moreover,  $|\D^{2}\bar \xi(x) |+|\D^{3}\bar \xi(x) |\le L(1+|x|^{p_{\bar \xi}-1})$, for all $x\in \R$, for some $L>0$. 
  Moreover, there are constants $L>0$, $\Upsilon>0$, and $C_\Upsilon>0$, such that $|\D^{2}\bar \xi(x) |+|\D^{3}\bar \xi(x) |\le L(1+|x|^{p_{\bar \xi}-1})$ for all $x\in \R$, and ${\rm sgn}(x)\D\bar\xi(x)\ge C_\Upsilon \abs{x}^{p_{\bar \xi}-1}$ for all $\abs{x}\ge \Upsilon$, where ${\rm sgn}(\cdot)$ is the sign function.
\end{Assumption}

\begin{Proposition}\label{prop : conv rho num}  Let Assumptions \ref{asmp: diffusion limit existence},  \ref{assumption: Lipschitz value discounted jump + borne EXq}  and \ref{ass : contraction diffusion diff finies} hold with $p_{\xi}\ge 3$.  Assume further that \eqref{eq: conti r en a} is satisfied. Then, there exists $L_{\rm num}>0$ and $h_{\rm num}>0$ such that, for all $(h,\kappa)\in (0,h_{\rm num})\x \N$, satisfying \eqref{eq: cond kappa h2 le demi - L},  $\kappa h^{2}\le 1$ and ${(\kappa-3) h}\ge \Upsilon$, we have
\[
|\bar \rho^{\kappa,*}_{h}-  {\bar\rho^{*}}|\le L_{\rm num} (h^\gamma + h^{-1}  \abss{\kappa h}^{-|p_{\bar \xi}-1|}).
\]
 In particular, 
\begin{align*}
|\bar \rho^{\kappa,*}_{h}-\rho^{*}_{\eps}|\le L_{\rm num} (h^\gamma + h^{-1}  \abss{\kappa h}^{-|p_{\bar \xi}-1|})+   \eps^{\frac{{\gamma}}{2}} L^{{\gamma}}_{\delta \rho}\,\mbox{ for all $\eps\in (0,1)$.}
\end{align*}
\end{Proposition}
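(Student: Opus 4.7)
The plan is to bound $|\bar\rho^{\kappa,*}_h-\bar\rho^{*}|$ by comparing the scheme's ergodic MDP with the continuous problem through the HJB equation \eqref{eq: HJB ergodic diffusive thm}; the second assertion then follows from Theorem \ref{thm: delta epsilon bound on discounted value difference} and the triangle inequality.

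First, we establish an interior consistency estimate. Using the local H\"older bound on $\D^{2}\bar\Wf$ from \eqref{eq: estimee bar W} and a Taylor expansion with integral remainder of the central differences $\Delta_h,\Delta^{2}_h$, we obtain
\[
|\bar\Lc^{\bar a}_h \bar\Wf(x)-\bar\Lc^{\bar a}\bar\Wf(x)|\le Ch^{\gamma}(1+|x|),\qquad x\in \overset{\circ}{\Mc^{\kappa}_h},\;\bar a\in \Ab,
\]
together with the sharper estimate $|\bar\Lc^{\bar a}_h\bar\xi(x)-\bar\Lc^{\bar a}\bar\xi(x)|\le Ch(1+|x|^{p_{\bar\xi}-1})$ exploiting the $\Cc^{3}$ regularity and polynomial growth of the derivatives of $\bar\xi$ from Assumption \ref{ass : contraction diffusion diff finies}.

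Next, we control the scheme's invariant measure near the truncation boundary. For every stationary policy $\bar{\rm a}\in \Ar$, the chain $(Z^{\cdot,\bar{\rm a}}_t)$ admits a unique invariant probability $\pi^{\bar{\rm a}}_h$ by \cite[Chapter 7, Theorem 2.1]{kushner2001numerical}. Expanding the invariance identity $\E^{\pi^{\bar{\rm a}}_h}[\bar Q^{\bar{\rm a}}_h\bar\xi-\bar\xi]=0$ into interior and boundary contributions, substituting the interior Lyapunov drift from Assumption \ref{ass : contraction diffusion diff finies} up to the $O(h)$ consistency error of Step 1, and exploiting the negative boundary jump $\bar\xi(z_3)-\bar\xi(z_1)\le -2h\,C_\Upsilon(\kappa h-2h)^{p_{\bar\xi}-1}$ (valid thanks to $(\kappa-3)h\ge \Upsilon$ and the sign condition on $\D\bar\xi$) together with its symmetric analogue at $z_{2\kappa+1}$, should yield, uniformly in $\bar{\rm a}$,
\[
 \E^{\pi^{\bar{\rm a}}_h}[\bar\xi(Z)]\le C,\qquad \pi^{\bar{\rm a}}_h(\{z_1,z_{2\kappa+1}\})\le C\,h\,(\kappa h)^{-(p_{\bar\xi}-1)}.
\]

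Finally, we perform a two-sided HJB comparison. Let $\bar{\rm a}^{\kappa}_h$ attain the supremum in \eqref{eq: edp diff finie matriciel}. Evaluating the continuous HJB pointwise at $\bar{\rm a}^{\kappa}_h(x)$, integrating against $\pi^{\bar{\rm a}^{\kappa}_h}_h$, and using the invariance identity for $\bar\Wf$ to remove the interior generator up to a boundary correction of size $\pi^{\bar{\rm a}^{\kappa}_h}_h(z_1)(\bar\Wf(z_3)-\bar\Wf(z_1))/\Delta t_h$ (and its analogue at $z_{2\kappa+1}$), the Lipschitz regularity of $\bar\Wf$ combined with Steps 1 and 2 yields $\bar\rho^{*}\ge \bar\rho^{\kappa,*}_h - C(h^{\gamma} + h^{-1}(\kappa h)^{-(p_{\bar\xi}-1)})$. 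The reverse inequality follows symmetrically by testing the scheme against a measurable selector $\hat{\rm a}:\R\to\Ab$ of maximisers of the continuous HJB (whose existence is granted by \eqref{eq: conti r en a} and the continuity of $\mu$, $\sigma$) and using its sub-optimality in the scheme. The hard part will be the boundary bookkeeping: each visit to $\{z_1,z_{2\kappa+1}\}$ induces a Lipschitz-size jump $|\bar\Wf(z_3)-\bar\Wf(z_1)|\le 2hL_{\bar\Wf}$ which, once rescaled by $\Delta t_h=h^{2}/(L_{b_{1},b_{2}})^{2}$ to convert step-discrepancies into ergodic-rate contributions, is precisely what produces the factor $h^{-1}(\kappa h)^{-(p_{\bar\xi}-1)}$ in the statement.
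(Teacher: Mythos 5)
Your proposal is correct in outline and shares the two load-bearing ingredients of the paper's own argument (a consistency estimate for the finite-difference generator on $\bar\Wf$, and a Lyapunov control of the boundary of the truncated grid), but it organises the ergodic comparison differently. The paper works with the continuous-time pure-jump Markov chain $\tilde X^{x,\bar{\rm a}}$ with jump rate $1/\Delta t_h$: it first establishes the ergodic representation \eqref{eq: caracterisation bar rho * kappa h}, proves the uniform-in-time moment bound \eqref{eq: control Lp tilde X } via the discrete Lyapunov drift (using \eqref{eq: b1}--\eqref{eq: b2}), injects $\bar\Wf$ into the discrete Dynkin operator to produce a residual $\delta r_h$ (with interior part $C((1+|x|)h+h^\gamma)(1+|x|)$ and boundary part $Ch^{-1}(1+|x|)\1_{\{|x|=\kappa h\}}$), and finally applies the verification arguments of Lemma \ref{lem: jump ergodic convergence and verification} followed by H\"older's and Markov's inequalities, so that the boundary contribution is controlled via the tail probability $\P[|\tilde X_s|=\kappa h]\le C(\kappa h)^{-p_{\bar\xi}}$. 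You instead phrase everything in terms of the invariant probability measures $\pi^{\bar{\rm a}}_h$ of the controlled chain and extract the boundary mass $\pi^{\bar{\rm a}}_h(\{z_1,z_{2\kappa+1}\})\le Ch(\kappa h)^{-(p_{\bar\xi}-1)}$ directly from the stationarity identity $\E^{\pi^{\bar{\rm a}}_h}[\bar Q^{\bar{\rm a}}_h\bar\xi-\bar\xi]=0$, combining the $O(\Delta t_h)$ interior drift with the strongly negative reflected drift of size $-hC_\Upsilon((\kappa-3)h)^{p_{\bar\xi}-1}$ at the boundary. This is a genuinely different (and in fact slightly sharper) route: it avoids the loss incurred by H\"older's inequality and gives a boundary contribution of order $(\kappa h)^{-(p_{\bar\xi}-1)}$, which is smaller than the paper's $h^{-1}(\kappa h)^{-(p_{\bar\xi}-1)}$ under $\kappa h^{2}\le 1$; the paper's looser bookkeeping through the tail probability is what produces the extra $h^{-1}$. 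What your approach needs, and the paper's does not state explicitly, is the existence of stationary distributions $\pi^{\bar{\rm a}}_h$ for every stationary policy, which is covered by the communication property \eqref{eq: p Z kappa=0} and \citethm[Chapter 7, Theorem 2.1]{kushner2001numerical} but should be cited.

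Two small imprecisions to fix. First, your interior consistency claim $|\bar\Lc^{\bar a}_h\bar\Wf-\bar\Lc^{\bar a}\bar\Wf|\le Ch^{\gamma}(1+|x|)$ omits the term coming from the drift $\mu(x,\bar a)\Delta_h\bar\Wf$: since $[\mu]_{\Cc^0_{\rm lin}}<\infty$ and the local $\Cc^{2,\gamma}$-norm of $\bar\Wf$ grows linearly by \eqref{eq: estimee bar W}, the error from the first-order finite difference contains an additional $C(1+|x|)^{2}h$ term (this is the $(1+|x|)h(1+|x|)$ part of the paper's $\delta r_h$). It is harmless because it averages to $O(h)\le O(h^\gamma)$ under the moment bound $\E^{\pi^{\bar{\rm a}}_h}[(1+|Z|)^{2}]\le C$ (valid since $p_{\bar\xi}\ge 2$), but the pointwise statement should include it. Second, your closing sentence attributes the factor $h^{-1}(\kappa h)^{-(p_{\bar\xi}-1)}$ ``precisely'' to the boundary Lipschitz jump; in fact $\pi(\text{bdry})\cdot O(h^{-1})=O((\kappa h)^{-(p_{\bar\xi}-1)})$, which is a sharper quantity, and the appearance of $h^{-1}$ in the statement is simply a coarser upper bound valid since $\kappa h\le h^{-1}$.
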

%\lc{Tu avais: $|\bar \rho^{\kappa,*}_{h}-  {\bar\rho^{*}}|\le  {L_{\rm num} \left( h^{\gamma} +\bru{h^{-1}  |\kappa h|^{-(p_{\xi}-1)}}\right)}.$}

\begin{proof} Given $\bar{\rm a}\in \Ar$ and $x\in \Mc^{\kappa}_{h}$, let $\tilde X^{x,\bar{\rm a}}$ be the pure jump continuous time Markov  {chain} defined by a sequence of jump times $(\tau_{n})_{n\ge 1}$ such that the increment{s} $(\tau_{n+1}-\tau_{n})_{n\ge 0}$ (with the convention $\tau_{0}=0$) are independent and identically distributed according to the exponential law of mean $\Delta t_{h}$ and such that{, for $n\ge 1$},
$$
\P[\tilde X^{x,\bar{\rm a}}_{\tau_{n}}=z_{i}|{(\tilde X^{x,\bar{\rm a}}_{0},\tau_{0}),\ldots,(\tilde X^{x,\bar{\rm a}}_{\tau_{n-1}},\tau_{n-1}),\tau_{n}}]=  
(\bar Q^{\bar  {\rm a} }_{h})^{i,j(\tilde X^{x,\bar{\rm a}}_{\tau_{n-1}})},
$$
with 
$$
j(\tilde X^{x,\bar{\rm a}}_{\tau_{n-1}}) {\in \N\;\mbox{ s.t. }\;}z_{j(\tilde X^{x,\bar{\rm a}}_{\tau_{n-1}})}=\tilde X^{x,\bar{\rm a}}_{\tau_{n-1}},
$$
and $\tilde X^{x,\bar {\rm a}}=\tilde X^{x,\bar{\rm a}}_{\tau_{n-1}}$ on $[\tau_{n-1},\tau_{n})$.  

1. First note that, by construction, $\bar \Wf^{\kappa}_{h}$ is bounded on the finite set $\Mc^{\kappa}_{h}$. Then,  by the arguments in the proof of  Lemma \ref{lem: jump ergodic convergence and verification} and \eqref{eq: edp diff finie matriciel}, we have 
\begin{align}\label{eq: caracterisation bar rho * kappa h} 
\bar \rho^{\kappa,*}_{h}=\sup_{\bar {\rm a}\in \Ar} \lim_{T\to \infty} \frac1T\E\left[\int_{0}^{T} r(\tilde X^{x,\bar {\rm a}}_{s},\bar {\rm a}(\tilde X^{x,\bar{\rm a}}_{s}))ds\right].
\end{align}

  2. We now prove that there exists $C^{1'}_{\bar \xi},C^{2'}_{\bar \xi},h_{{\rm num}}>0$  such that, for all $x\in \R$, $\bar {\rm a}\in \Ar$, $0<h\le  h_{{\rm num}}$ and $\kappa$ such that \eqref{eq: cond kappa h2 le demi - L} holds,  $\kappa h^{2}\le 1$ and ${(\kappa-3) h}\ge \Upsilon$, we have
  \begin{align}\label{eq: control Lp tilde X } 
  \E[|\tilde X^{x,\bar{\rm a}}_{t}|^{p_{\bar \xi}}]\le C^{2}_{\bar \xi}\left\{e^{-  C^{1'}_{\bar \xi} t}C^{2}_{\bar \xi}|x|^{p_{\bar \xi}}+\frac{C^{2'}_{\bar \xi}}{  C^{1'}_{\bar \xi}}(1-e^{-  C^{1'}_{\bar \xi} t})\right\} ,\;t\ge 0.
  \end{align}
  Using Assumption \ref{ass : contraction diffusion diff finies} and Assumption \ref{asmp: diffusion limit existence}, and Taylor expansions of first and second orders, we first deduce that, for $x\in\overset{\circ}{\Mc^{\kappa}_h}$,
  \begin{align*} 
    \D\bar \xi (x)\mu(x,\bar {\rm a}(x))+\frac12 \sigma^{2}(x) \D^2\bar \xi(x) 
    &= \frac{1}{\Delta t_{h}}\E[   \bar \xi(\tilde X^{x,\bar{\rm a}}_{\tau_{1}})-\bar \xi(x)]-c(x)h,\;
  \end{align*}
  in which $|c(x)|\le C(1+|x|^{p_{\bar \xi}})\le C(1+C^{2}_{\bar \xi}\bar \xi(x))$ for some $C>0$ independent on $x\in \R$, $\bar {\rm a}\in \Ar$, $h$ and $\kappa$.
  Using   \eqref{eq:cond suff mean revert bar V}, this implies that, for $x\in\overset{\circ}{\Mc^{\kappa}_h}$,
  \begin{align}\label{eq: b1}
  \frac{1}{\Delta t_{h}}\E[   \bar \xi(\tilde X^{x,\bar{\rm a}}_{\tau_{1}})-\bar \xi(x)]\le  -\left(C^{1}_{\bar \xi} -hCC^{2}_{\bar \xi}\right)\bar \xi(x)+C^{2}_{\bar \xi}+Ch .
  \end{align}
   Consider now the case $x=z_1$, the other boundary being symmetric. Let $\Xi$ be a discrete random variable taking value $k\in\{1,2,3\}$ with probability $(\bar Q^{\bar{ \rm a}}_h)^{1,k}$. Using Assumption \ref{ass : contraction diffusion diff finies} and \eqref{eq:cond suff mean revert bar V 2},  we obtain that, for some  random variable $\hat z_\Xi$ such that $\hat z_\Xi\in [z_1,z_1+\Xi h]$ a.s.,
  \begin{align} \frac{1}{\Delta t_h}\EE[\xi(z_1+\Xi h)-\xi(z_1)]&=\frac{1}{\Delta t_h}\EE[\Xi h\D\xi(\hat z_\Xi)]\le -\frac {L_{b_1,b_2}^2}{h}C_\Upsilon\EE[\Xi \abs{\hat z_\Xi}^{p_{\bar\xi}-1}]\nonumber\\
     &\le  - L_{b_1,b_2}^2 C_\Upsilon\EE[ \kappa h \abs{\hat z_\Xi}^{p_{\bar\xi}-1}]\le -C'\bar \xi(z_1)\label{eq: b2} 
    \end{align}
    when $\abs{(\kappa-3) h}\ge\Upsilon$  and $\kappa h^{2}\le 1$, in which $C'>0$ does not depend on $\kappa$ nor $h$. The above also holds with $z_{2\kappa+1}$ in place of $z_{1}$. Combining \eqref{eq: b1}-\eqref{eq: b2},  we obtain
    \begin{align*} 
      \frac{1}{\Delta t_{h}}\E[ \bar \xi(\tilde X^{x,\bar{\rm a}}_{\tau_{1}})-\bar \xi(x)] \le -\left((C^{1}_{\bar \xi} -hCC^{2}_{\bar \xi})\wedge C'\right)\bar \xi(x)+C^{2}_{\bar \xi}+Ch \le -  C^{1'}_{\bar \xi}\bar \xi(x)+C^{2'}_{\bar \xi},
     \end{align*}
     for all $x\in\Mc^\kappa_h$, whenever $h\le h_{\rm num}$, in which $C^{1'}_{\bar\xi},C^{2'}_{\bar\xi}, h_{\rm num}>0$ do not depend on $\kappa$ nor $h$. One can then argue as in the proof of Lemma \ref{lemma: borne EXq} to obtain \eqref{eq: control Lp tilde X }.

 3. From now on, we denote by $C>0$ a generic constant, which may change from line to line, but does not depend on $\kappa$ or $h$. We now appeal to \eqref{eq: estimee bar W} and the Lipschitz continuity of $(\mu,\sigma^{2})$, and use the fact that $h\le 1$ to deduce by consistency arguments  that, for $x\in {\Mc^{\kappa}_h}$, 
 $$
\bar \Lc^{\bar {\rm a}(x)} \bar \Wf(x) = \frac{1}{\Delta t_{h}}\E[ \bar \Wf(\tilde X^{x,\bar{\rm a}}_{\tau_{1}})-\bar \Wf(x)] +\delta r_{h}(x,{\rm a}(x))
 $$
in which 
\begin{align*} 
  |\delta r_{h}(x,\bar {\rm a}(x))|\le& C((1+|x|)h+h^{\gamma})(1+|x|) +Ch^{-1}(1+\abs{x})\1_{\{|x|= \kappa h  \}} .
  \end{align*}
The above  combined with \eqref{eq: HJB ergodic diffusive thm} implies that   
\begin{align*}
\bar \rho^{*}=\frac1{\Delta t_{h}}\sup_{\bar a\in\Ar}\E\left[ \bar \Wf(\tilde X^{x,\bar a}_{\tau_{1}})-\bar \Wf(x) + \{r(x,\bar a)+\delta r_{h}(x,\bar a)\}\Delta t_{h}   \right]\;\mbox{ for } x\in  \Mc^{\kappa}_h.
\end{align*} 
Arguing again as in the proof of  Lemma \ref{lem: jump ergodic convergence and verification},  recalling \ref{eq: caracterisation bar rho * kappa h}, and combining \eqref{eq: control Lp tilde X } with H\"older's and Markov's inequality, we deduce that we can find $C,C',C''>0$ such that
\begin{align*}
|\bar \rho^{\kappa,*}_{h}-\bar \rho^{*}|
\le& \sup_{\bar{\rm a}\in \Ar}\limsup_{T\to \infty} \frac1T \E\left[\int_{0}^{T} |\delta r_{h}|(\tilde X^{0,\bar{\rm a}}_{s},\bar{\rm a}(\tilde X^{0,\bar{\rm a}}_{s}))\left(\1_{\{|\tilde X^{0,\bar{\rm a}}_{s}|<  \kappa h\}} +  \1_{\{|\tilde X^{0,\bar{\rm a}}_{s}|= \kappa h\}} \right)ds \right]\\
\le &\sup_{\bar{\rm a}\in \Ar}\limsup_{T\to\infty}\frac CT\int_0^T\EE\bigg[(h+h\abss{\tilde X^{0,\bar{\rm a}}_{s}}^2+h^\gamma\abss{\tilde X^{0,\bar{\rm a}}_{s}}) + h^{-1}(1+\abss{\tilde X^{0,\bar{\rm a}}_{s}})\1_{\{\abs{\tilde X^{0,\bar{\rm a}}_{s}}=\kappa h\}}\bigg]\de s \\
\le& C'(h+h^\gamma) + \sup_{\bar{\rm a}\in \Ar}\limsup_{T\to\infty} h^{-1}\frac CT\int_0^T \EE[(1+\abss{\tilde X^{0,\bar{\rm a}}_{s}})^{p_{\bar \xi}}]^{\frac1{p_{\bar \xi}}}\left(\frac{\EE[\abss{\tilde X^{0,\bar{\rm a}}_{s}}^{p_{\bar \xi}}]}{(\kappa h)^{p_{\bar \xi}}}\right)^{\frac{p_{\bar \xi}-1}{p_{\bar \xi}}}\de s\\
\le& C''(h^\gamma + h^{-1}  \abss{\kappa h}^{-|p_{\bar \xi}-1|})\,.
%\le& \bru{C\left( h^{\gamma} + h |\kappa h|^{-(p_{\xi}-1)}+ h^{\gamma} |\kappa h|^{-p_{\xi}}\right).}
\end{align*}
It remains to appeal to Theorem \ref{thm: delta epsilon bound on discounted value difference} to complete the proof. 
\end{proof}

One can also construct from the above scheme an almost optimal control for the original pure jump problem. 
For this purpose, let $\phi$ be a smooth density function with support $(-1,1)$ such that $\|\phi\|_{\Cc^{2}_{b}}\le 1$. Given $n\ge 1$, let 
$$
\bar {  \Wf}^{\kappa,n}_{h}(x):=\int (\bar {  \Wf}^{\kappa}_{h}(y)-\bar\rho^{\kappa,*}_h\Delta t_{h})\phi(n(y-x))\de y,\;x\in \R,
$$ 
with the convention that $\bar {\Wf}^{\kappa}_{h}=\bar { \Wf}^{\kappa}_{h}(z_{1})-\bar\rho^{\kappa,*}_h\Delta t_{h}$ on $(-\infty,z_{1})$ and $\bar {  \Wf}^{\kappa}_{h}=\bar {  \Wf}^{\kappa}_{h}(z_{2\kappa+1})-\bar\rho^{\kappa,*}_h\Delta t_{h}$ on $(z_{2\kappa+1},\infty)$.

Let $\bar {\rm a}^{\kappa,n}_{h}\in \Ar$ be such that 
\begin{align}
\bar {\rm a}^{\kappa,n}_{h}\in {\rm arg}\max_{a\in \Ab}[\bar \Lc^{a}\bar {  \Wf}^{\kappa,n}_{h}+r(\cdot,a)],\;\mbox{ on } \R,\label{eq: smoothed numerical max on a}
\end{align}
and set 
 $\hat \alpha^{\kappa,n}_{h}=\bar{\rm a}^{\kappa,n}_{h}(\hat X^{\kappa,n,h})$ with 
\begin{align*}
\hat X^{\kappa,n,h}_\cdot&=   \int_0^\cdot \int_{\R^{d'}} b_\ve(\hat X^{\kappa,n,h}_{s-},\bar{\rm a}^{\kappa,n}_{h}(\hat X^{\kappa,n,h}_{s-}),e)N(\de e,\de  s)\,.
 \end{align*}
%In practice, if $r$ and $\mu$ are differentiable in $a$ for all $x$, first order conditions can then be exploited to represent $\bar{\rm a}^{\kappa,n}_h$ explicitly at low cost, with the guarantees of Proposition \ref{prop: conv politique num}.\bru{[reprendre]}
%\lcs{the construction is numerically explicit, ctrl peut etre calculé numerioquemenent at no cost eventuellement et la propo donne les guaranties d'erreur. + l'approche semble nouvelle dans la litterature and of independent interest methodologically.} 
The control $\bar{\rm a}_h^{\kappa,n}$ can be computed numerically at low cost, e.g.~via first order conditions; Proposition \ref{prop: conv politique num} gives the associated error bounds. This approach seems novel in the literature, and is of independent methodological interest.

\begin{Proposition}\label{prop: conv politique num} Let the conditions of Proposition \ref{prop : conv rho num} 
hold. 
Then, there exists $C>0$ such that, for all $K>0$, $n\ge 1$ and  $\eps\in (0,1)$, 
\begin{align}\label{eq: rho eps control optim schema num} 
|\rho^{*}_{\eps}(0,\hat \alpha^{\kappa,n}_{h})-\rho^{*}_{\eps}|\le C\left(  n^{-\gamma}+  \eps^{\frac{\gamma}2}+  n\sup_{x\in B_{K}(0)}  |\bar {  \Wf}^{\kappa }_{h} -\bar\rho^{\kappa,*}_h\Delta t_{h} -\bar \Wf|(x)+n K^{-1}\right).
\end{align}
If, moreover,
\\
{\rm (i)}  $\sigma$ is constant, \\
{\rm (ii)}  there exists $c>0$ such that   $\mu(x)-\mu(x')\le -c (x-x')$ if  $x\ge x' \in \R$,\\
{\rm (iii)}  there exists $R>0$ such that 
\begin{align}\label{eq: hyp R large}  
\sup_{\abs{x}>R}\sup_{\bar a \in \Ab}\mu(x,\bar a)x <-\frac12\sigma^2,
\end{align}
then
$$
  \limsup_{h \to 0} \sup_{x\in B_{K}(0)} |\bar {  \Wf}^{\kappa_{h} }_{h}-\bar\rho^{\kappa_{h},*}_h\Delta t_{h}-\bar \Wf|(x) =0
$$ 
for any family $(\kappa_{h})_{h>0}\subset (2\N+1)$ such that  $\lim_{h\downarrow 0 }\kappa_{h}h^{2}=0$ and $\lim_{h\downarrow 0 }\kappa_{h}h^{\frac{p_{\bar \xi}}{p_{\bar \xi}-1}}=\infty$.
\end{Proposition}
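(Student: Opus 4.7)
The argument for the first assertion is a verification-style step whose key structural observation is that, since $\sigma$ does not depend on $a$ by Assumption~\ref{asmp: diffusion limit existence}(iii), the defining relation \eqref{eq: smoothed numerical max on a} is equivalent to $\bar{\rm a}^{\kappa,n}_h(x)\in \argmax_{a\in\Ab}[\mu(x,a)\D\bar\Wf^{\kappa,n}_h(x) + r(x,a)]$; hence the suboptimality of $\bar{\rm a}^{\kappa,n}_h$ for the true HJB \eqref{eq: HJB ergodic diffusive thm} involves only the \emph{first} derivative of $\bar\Wf^{\kappa,n}_h - \bar\Wf$. Setting $H(x,p) := \sup_{a\in\Ab}[\mu(x,a) p + r(x,a)]$ and using both $|H(x,p) - H(x,p')| \le C(1+|x|)|p-p'|$ (from the linear growth of $\mu$) and \eqref{eq: HJB ergodic diffusive thm}, one derives the pointwise bound
\[
0 \le \bar\rho^* - \bar\Lc^{\bar{\rm a}^{\kappa,n}_h(x)}\bar\Wf(x) - r(x,\bar{\rm a}^{\kappa,n}_h(x)) \le C(1+|x|)|\D\bar\Wf^{\kappa,n}_h - \D\bar\Wf|(x).
\]
Applying Dynkin's formula to $\bar\Wf$ along $\hat X^{\kappa,n,h}$, rewriting the generator via \eqref{eq: def delta r eps}, dividing by $T$ and letting $T\to\infty$ (the boundary term vanishes by the linear growth of $\bar\Wf$ together with the uniform $L^{p_\xi}$ bound of Lemma~\ref{lemma: borne EXq}, valid since Assumption~\ref{asmp: diffusion limit existence}(iv) yields Assumption~\ref{asmp: mean rever} with $p_\xi\ge 3$ uniformly in $\eps$), and controlling the $\delta r_\eps$-contribution by $C\eps^{\gamma/2}$ via Proposition~\ref{prop: error order of delta r epsilon}, gives
\[
\rho_\eps(0,\hat\alpha^{\kappa,n}_h) \ge \bar\rho^* - C\eps^{\gamma/2} - C\liminf_{T\to\infty}\frac{1}{T}\int_0^T \EE\bigl[(1+|\hat X^{\kappa,n,h}_s|)|\D\bar\Wf^{\kappa,n}_h - \D\bar\Wf|(\hat X^{\kappa,n,h}_s)\bigr]\de s.
\]

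The first-derivative gap is then controlled by the decomposition $\bar\Wf^{\kappa,n}_h - \bar\Wf = [\bar\Wf^{\kappa,n}_h - M_n] + [M_n - \bar\Wf]$, where $M_n$ denotes the convolution of $\bar\Wf$ against the same kernel $\phi(n(\cdot - x))$ used in defining $\bar\Wf^{\kappa,n}_h$. Using the local $C^{2,\gamma}$ estimate \eqref{eq: estimee bar W} and a Taylor argument (exploiting the symmetry of $\phi$ and H\"older continuity of $\D^2\bar\Wf$), the second bracket satisfies $|\D(M_n - \bar\Wf)|(x) \le C n^{-\gamma}(1+|x|)$. The first bracket is the convolution of $(\bar\Wf^\kappa_h - \bar\rho^{\kappa,*}_h\Delta t_h - \bar\Wf)$ with $\phi(n(\cdot - x))$, so moving the derivative onto the kernel produces the factor $n$, yielding
\[
|\D(\bar\Wf^{\kappa,n}_h - M_n)|(x) \le Cn \sup_{y\in B_{K+1}(0)}|\bar\Wf^\kappa_h - \bar\rho^{\kappa,*}_h\Delta t_h - \bar\Wf|(y),\quad x \in B_K(0).
\]
Splitting the $\de s$-integrand on $\{|\hat X^{\kappa,n,h}_s|\le K\}$ (which yields $Cn\sup + Cn^{-\gamma}$ after taking expectations, using the uniform $L^1$-moments) and on its complement (where a rough pointwise bound $|\D\bar\Wf^{\kappa,n}_h|(x) \le Cn(1+|x|)$---from the linear growth of the scheme solution $u := \bar\Wf^\kappa_h - \bar\rho^{\kappa,*}_h\Delta t_h$---combined with Markov's inequality and $p_\xi \ge 3$ gives a $Cn/K$ contribution), and combining with the trivial upper bound $\rho_\eps(0,\hat\alpha^{\kappa,n}_h) \le \rho^*_\eps$ and $|\bar\rho^* - \rho^*_\eps| \le C\eps^{\gamma/2}$ from Theorem~\ref{thm: delta epsilon bound on discounted value difference}, produces \eqref{eq: rho eps control optim schema num}.

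For the second assertion, the plan is a compactness-uniqueness argument for $u_h := \bar\Wf^{\kappa_h}_h - \bar\rho^{\kappa_h,*}_h\Delta t_h$. Assumption (i), $\sigma$ constant, keeps the scheme consistent with the semilinear PDE \eqref{eq: HJB ergodic diffusive thm}. Assumption (ii), via a discrete coupling on the Markov chains associated to the transition matrices $\bar Q^{\bar{\rm a}}_h$ in the spirit of Lemma~\ref{lemma: Lipschitz value discounted jump}, provides uniform-in-$h$ Lipschitz estimates for $u_h$. Assumption (iii) gives uniform tightness of the invariant measures of these chains and hence uniform local bounds on $u_h$. Arzel\`a--Ascoli then produces locally uniform subsequential limits; by consistency and Proposition~\ref{prop : conv rho num} ($\bar\rho^{\kappa_h,*}_h \to \bar\rho^*$), each such limit is a $C^{0,1}_b$ viscosity solution of \eqref{eq: HJB ergodic diffusive thm}, which must be $\bar\Wf$ by the uniqueness up to additive constants of linear-growth solutions established in Theorem~\ref{thm: viscosity solution of ergodic diffusion PDE}. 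The scale conditions $\kappa_h h^2\to 0$ and $\kappa_h h^{p_{\bar\xi}/(p_{\bar\xi}-1)}\to\infty$ ensure, respectively, that the admissibility conditions of Proposition~\ref{prop : conv rho num} eventually hold and that its error term $h^{-1}(\kappa_h h)^{-(p_{\bar\xi}-1)}$ vanishes. The main obstacle is the reduction of the suboptimality bound to the first-derivative gap alone---essential for producing the factor $n$ rather than $n^2$ on the numerical error---which relies crucially on the control-independence of $\sigma$; a secondary subtlety is the uniform-in-$h$ Lipschitz estimate for $u_h$ under (ii) needed to pass to the limit in the second assertion.
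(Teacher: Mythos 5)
Your argument for the first assertion is correct and takes a genuinely cleaner route than the paper. You exploit that $\sigma$ is control-independent to observe that $\bar{\rm a}^{\kappa,n}_h$ is a maximizer of the purely first-order Hamiltonian $a\mapsto \mu(\cdot,a)\D\bar\Wf^{\kappa,n}_h + r(\cdot,a)$, so the pointwise suboptimality of $\bar{\rm a}^{\kappa,n}_h$ with respect to \eqref{eq: HJB ergodic diffusive thm} is controlled by $|\D\bar\Wf^{\kappa,n}_h - \D\bar\Wf|$ alone; this is valid. The paper instead keeps the second-order term and bounds $\D^2(\bar\Wf^{\kappa,n}_h - \bar\Wf)$ by moving both derivatives onto the mollifier, producing an $n^2\int_{B_{1/n}(\cdot)}$ factor. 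Your claim that your reduction is ``essential for producing the factor $n$ rather than $n^2$'' is, however, mistaken: since $|B_{1/n}|\sim 2/n$ in dimension one, the paper's $n^2\int_{B_{1/n}}$ already produces the factor $n$, so both methods match \eqref{eq: rho eps control optim schema num}; your route merely avoids the intermediate second-derivative estimate (and, incidentally, would yield $n^{-1}$ in place of $n^{-\gamma}$ without even needing the symmetry of $\phi$, a slight tightening of the constant).

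There is a genuine gap in your treatment of the second assertion. After extracting locally uniform subsequential limits $u_\infty$ of $u_h:=\bar\Wf^{\kappa_h}_h-\bar\rho^{\kappa_h,*}_h\Delta t_h$, you conclude that $u_\infty=\bar\Wf$ ``by the uniqueness up to additive constants of linear-growth solutions established in Theorem~\ref{thm: viscosity solution of ergodic diffusion PDE}.'' That theorem establishes no such thing: it only proves uniqueness of the ergodic constant $\tilde\rho$, not uniqueness of $\tilde\Wf$ modulo constants (and its verification part applies to $\Cc^2$ solutions, whereas your Arzel\`a--Ascoli limit is merely a viscosity solution). Proving that the linear-growth viscosity (sub/super-)solution limit equals $\bar\Wf$ up to a constant is in fact the hard core of the second assertion; the paper has to establish it in-proof, following the unbounded-domain comparison argument of \citethm{barles2016unbounded}. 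This is precisely where hypothesis (iii), \eqref{eq: hyp R large}, is used: one perturbs $\bar\Wf^{\infty*}_0-\bar\Wf-g$ by $-\iota|\cdot|^2$, locates an interior maximum outside a large ball, and uses \eqref{eq: hyp R large} together with the subsolution property to reach a contradiction, after which the strong maximum principle (e.g.~\cite{kawohl1998strong}) gives the identity. Your attribution of (iii) to ``uniform tightness of the invariant measures and hence uniform local bounds on $u_h$'' is also off: the uniform local bounds already follow from the coupling/contraction estimate based on (ii), exactly as you describe, while (iii) plays no role at the compactness stage. As written, your proof of the second assertion simply invokes an unavailable uniqueness result and therefore does not close.
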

 
\begin{proof} 1. We first note that 
\begin{align*} 
\D\bar {  \Wf}^{\kappa,n}_{h}(x)&=\int \D \bar \Wf(y) \phi(n(y-x))dy-\int (\bar {  \Wf}^{\kappa}_{h}-\bar\rho^{\kappa,*}_h\Delta t_{h}-\bar \Wf)(y)n\phi'(n(y-x))dy\\
\D^{2}\bar {  \Wf}^{\kappa,n}_{h}(x)&=\int \D^{2} \bar \Wf(y) \phi(n(y-x))dy+\int (\bar {  \Wf}^{\kappa}_{h}-\bar\rho^{\kappa,*}_h\Delta t_{h}-\bar \Wf)(y)n^{2}\phi''(n(y-x))dy,\;x\in \R,
\end{align*}
in which $\phi'$ and $\phi''$ stand for the first and second order derivatives of $\phi$. Hence, it follows from  \eqref{eq: estimee bar W}, (i) of Assumption \ref{asmp: diffusion limit existence} and \eqref{eq: HJB ergodic diffusive thm}  that
\begin{align*}
\bar \Lc^{\bar {\rm a}^{\kappa,n}_{h}(\cdot)}\bar {  \Wf}^{\kappa,n}_{h}+r(\cdot,\bar {\rm a}^{\kappa,n}_{h}(\cdot))&=\max_{a\in \Ab}[\bar \Lc^{a}\bar {  \Wf}^{\kappa,n}_{h}+r(\cdot,a)]\\
&\ge  \max_{a\in \Ab}[\bar \Lc^{a}\bar {  \Wf} +r(\cdot,a)]-\frac12\delta r^{\kappa,n}_{h}
\\
&=\bar \rho^{*}-\frac12\delta r^{\kappa,n}_{h}
 \end{align*}
 in which $\delta r^{\kappa,n}_{h}$ satisfies, for some $C>0$ independent on $n, \kappa$ and $h$,  
$$
0\le \delta r^{\kappa,n}_{h}(x)\le C (1+|x|)\left[n^{-\gamma}+2 n^{2} \int_{B_{\frac1n}(x)} |\bar {  \Wf}^{\kappa }_{h}-\bar\rho^{\kappa,*}_h\Delta t_{h}-\bar \Wf|(y)dy \right]. 
$$
Similarly, 
\begin{align*}
\bar \rho^{*}-\frac12\delta r^{\kappa,n}_{h}
&\le \bar \Lc^{\bar {\rm a}^{\kappa,n}_{h}(\cdot)}\bar {  \Wf}^{\kappa,n}_{h}+r(\cdot,\bar {\rm a}^{\kappa,n}_{h}(\cdot))\\
&\le \bar \Lc^{\bar {\rm a}^{\kappa,n}_{h}(\cdot)}\bar {  \Wf}+r(\cdot,\bar {\rm a}^{\kappa,n}_{h}(\cdot))+\frac12\delta r^{\kappa,n}_{h}.
 \end{align*}
 Recalling \eqref{eq: def delta r eps} and Theorem \ref{thm: delta epsilon bound on discounted value difference}, we deduce that 
$$
  \rho^{*}_{\ve}-\eps^{\frac{\gamma}2} L^{\gamma}_{\delta \rho}\le   \frac1\ve\int_{\R^{d'}} \left[\bar \Wf(x+b_{\ve}(x,\bar {\rm a}^{\kappa,n}_{h}(x),e))-\bar \Wf(x)\right]\nu(\de e) +r(x,\bar {\rm a}^{\kappa,n}_{h}(x))+\delta r^{\kappa,n}_{h}(x)-\delta r_{\ve}(x,\bar {\rm a}^{\kappa,n}_{h}(x)) 
$$
for all $x\in \R$. 
We then deduce \eqref{eq: rho eps control optim schema num}  by the same arguments as in the proof of Theorem \ref{thm: delta epsilon bound on discounted value difference}.

2. It remains to prove the second assertion of the proposition.  For ease of notations, we do not write the dependence of $\kappa$ with respect to $h$, but we keep in mind that we can consider $h$ small and that $\kappa$ can be adjusted as soon as  the following results can apply to sequences such that $\lim_{h\downarrow 0 }\kappa_{h}h^{2}=0$ and $\lim_{h\downarrow 0 }\kappa_{h}h^{\frac{p_{\bar \xi}}{p_{\bar \xi}-1}}=\infty$.
\\
2.a. We first prove that  $[\bar \Wf^{\kappa}_{h}]_{\Cc^{0}_{\rm lin}(\Mc^{\kappa}_h)}$ does not depend on $\kappa$ nor $h$.  To this end, we adapt the arguments of Lemma \ref{lemma: Lipschitz value discounted jump} and Theorem \ref{thm: jump ergodic rho HJB + verif}, and actually prove that it is Lipschitz, uniformly in $\kappa$ and $h$. \\
Let $(\xi_{j})_{j\ge 1}$ be a sequence of i.i.d.~random variables following the uniform distribution on $[0,1]$ and let  $(\tau_{n})_{n\ge 1}$ be a random sequence, independent of $(\xi_{j})_{j\ge 1}$, such that the increment{s} $(\tau_{n+1}-\tau_{n})_{n\ge 0}$ (with the convention $\tau_{0}=0$) are independent and identically distributed according to the exponential law of mean $\Delta t_{h}$. 
Given $(x,\bar a,y)\in \R\x \Ab\x \R$, set 
$$
\Delta {\rm x}(x, \bar  a,y):=h\1_{\{y\le q^{+}_{h}(x, \bar  a)\}}-h\1_{\{q^{+}_{h}(x,  a)<y\le( q^{+}_{h}+q^{-}_{h})(x, \bar  a)\}}, \;\mbox{if } x\in \overset{\circ}{\Mc^{\kappa}_h},
$$
and  
\begin{align*} 
\Delta {\rm x}(z_{1}, \bar  a,y):=2h+\Delta {\rm x}(z_{3}, \bar  a,y)\;,\;\Delta {\rm x}(z_{2\kappa+1}, \bar  a,y):=-2h+\Delta {\rm x}(z_{2\kappa-1},\bar   a,y) .
\end{align*}
Let  $\check{\Ac\;\,}\!\!\!$ denote the collection of $\Ab$-valued processes that are predictable with respect to the filtration generated by $t\mapsto \sum_{i\ge 1}\xi_{i}\1_{\{\tau_{i}\le t\}}$
Given $\check \alpha\in \check{\Ac\;\,}\!\!\!$ and $x\in \Mc^{\kappa}_{h}$, let $\tilde X^{x,\check \alpha}$ be the pure jump continuous time Markov  {chain} defined by 
$$
\tilde X^{x,\check \alpha}_{\tau_{i+1}}=\tilde X^{x,\check \alpha}_{\tau_{i}}+\Delta {\rm x}(\tilde X^{x,\bar{\rm a}}_{\tau_{i}},  \check \alpha_{\tau_{i}},\xi_{i+1})
$$
and $\tilde X^{x,\check \alpha}=\tilde X^{x,\check \alpha}_{\tau_{i}}$ on $[\tau_{i},\tau_{i+1})$, $i\ge 0$. It has the same law as the process introduced at the beginning of the proof of Proposition \ref{prop : conv rho num}, and in particular 
\begin{align*}%\label{eq: caracterisation bar rho * kappa h bis} 
\bar \rho^{\kappa,*}_{h}=\sup_{\check \alpha\in \check{\Ac\;\,}\!\!\!} \lim_{T\to \infty} \frac1T\E\left[\int_{0}^{T} r(\tilde X^{x,\bar{\rm a}}_{s},  \check \alpha_{s})ds\right].
\end{align*}

We  set
$$
\check V_{\lambda}(x):=\sup_{\check\alpha\in\check{\Ac\;\,}\!\!\!}\E\left[\int_{0}^{\infty}e^{-\lambda s} r(\check X^{x,\check \alpha}_{s},\check \alpha_{s})ds\right].
$$

2.a.(i) We first need to  obtain contraction estimates similar to the ones obtained in the proof  of Lemma \ref{lemma: Lipschitz value discounted jump}. We restrict for the moment to the case where the distance between the initial data are in $2h\Z$.
\\
Let us first observe that,  for $h$ small enough for condition \eqref{eq: cond kappa h2 le demi - L} to hold, we have $q^{+}_{h}(x)<(q^{+}_{h}+ q^{-}_{h})(x')=\sigma^{2}/(L_{b_{1},b_{2}})^{2}=:m$ and conversely. Although recall that, by Assumption, $ \mu(x)-\mu(x')\le -c (x-x')\le 0$ and therefore $q^{+}_{h}(x)\le q^{+}_{h}(x')$ if  $x\ge x' \in \R$. 
Keeping this in mind, direct computations show that, if $x-x'\in 2h\Z$ and  $x,x'\in \overset{\circ}{\Mc^{\kappa}_h}$, and $\bar a \in \Ab$, then
\begin{align*}  
&\frac{1}{\Delta t_{h}}\E\left[|x+\Delta {\rm x}(x,\bar a,\xi_{1})-x'-\Delta {\rm x}(x',\bar a,\xi_{1})|-|x-x'|\right]
\\
&=
 \left(|x-x'+2h|-|x-x'|\right)\frac{q^{+}_{h}(x)\wedge m-q^{+}_{h}(x)\wedge q^{+}_{h}(x')}{\Delta t_{h}} \\
 &\;\;+ \left(|x-x'-2h|-|x-x'|\right)\frac{q^{+}_{h}(x')\wedge m-q^{+}_{h}(x')\wedge q^{+}_{h}(x)}{\Delta t_{h}}\\
&=
 \left(|x-x'+2h|-|x-x'|\right)\frac{\mu(x)-\mu(x)\wedge\mu(x')}{2h} + \left(|x-x'-2h|-|x-x'|\right)\frac{\mu(x')-\mu(x)\wedge\mu(x')}{2h}\\
 &=\left[\1_{\{x\ge x'\}} (\mu(x)-\mu(x'))+ \1_{\{x'> x\}} (\mu(x')-\mu(x))\right]\\
 &\le -c|x-x'|\,.
\end{align*}
On the other hand, if $x=z_{1}$,  $x'\in \overset{\circ}{\Mc^{\kappa}_h}$ and $z_{1}-x'\in 2h\Z$, then 
\begin{align*}  
%&\frac{1}{\Delta t_{h}}\E\left[|x+\Delta {\rm x}(x,\bar a,\xi_{1})-x'-\Delta {\rm x}(x',\bar a,\xi_{1})|-|x-x'|\right]
%\\
%&=\frac{1}{\Delta t_{h}}\E\left[|x+\Delta {\rm x}(x,\bar a,\xi_{1})-x'-\Delta {\rm x}(x',\bar a,\xi_{1})|\1_{\{x'\ge z_{1}+4h\}}-|x-x'|\right] \\
%\intertext{as if $x'\in\{z_1,z_3\}$ then it evolves identically to $z_1$, by definition; thus,}
&\frac{1}{\Delta t_{h}}\E\left[|x+\Delta {\rm x}(x,\bar a,\xi_{1})-x'-\Delta {\rm x}(x',\bar a,\xi_{1})|-|x-x'|\right]\\
&= \frac{1}{\Delta t_{h}}\left(-2h q^{+}_{h}(x',\bar a)-4h(q^{+}_{h}(z_{3},\bar a))-q^{+}_{h}(x',\bar a))-2h(1-q^{+}_{h}(z_{3},\bar a))\right)\1_{\{x'\ge z_{1}+4h\}}\\
&\;\;\; -\frac{1}{\Delta t_h}\abs{x-x'}\1_{\{x'< z_{1}+4h\}}\\
&\le -c|z_{3}-x'|\\%\1_{\{x'\ge z_{1}+4h\}}-2\frac{(L_{b_{1},b_{2}})^{2}}{h^{2}}(1-\frac{\sigma^{2}+L_{b_{1},b_{2}}(1+\kappa h)h}{(L_{b_{1},b_{2}}^{2})})\\
&\le -\frac{c}{2}|x-x'|\,.
\end{align*}
%for $h$ small enough with respect to $c$, so that in particular $\sigma^{2}+L_{b_{1},b_{2}}(1+\kappa h)h<(L_{b_{1},b_{2}})^{2}$ (recall that we assume in this part that $L_{b_{1},b_{2}}>\sigma$). 
In the case, $x'=z_{2\kappa+1}$ (with $\kappa\ge 4$ which we can assume here w.l.o.g.), then 
\begin{align*}  
&\frac{1}{\Delta t_{h}}\E\left[|x+\Delta {\rm x}(x,\bar a,\xi_{1})-x'-\Delta {\rm x}(x',\bar a,\xi_{1})|-|x-x'|\right]
\\
&= \frac{1}{\Delta t_{h}}\left[-4h q^{+}_{h}(x',\bar a)-6h(q^{+}_{h}(z_{3},\bar a))-q^{+}_{h}(z_{2\kappa-1},\bar a))-4h(1-q^{+}_{h}(z_{3},\bar a)))\right]\\
&\le -c|z_{3}-z_{2\kappa-1}|\\
&\le -\frac{c}{2}|x-x'|,
\end{align*}
in which the last inequalities follows from the fact that $\kappa\ge 4$.
A similar analysis can be done when $x'=z_{2\kappa+1}$ and $x\in {\Mc^{\kappa}_h}$. The above implies that, for $h$ small enough, 
\begin{align*}  
\frac{1}{\Delta t_{h}}\E\left[|x+\Delta {\rm x}(x,\bar a,\xi_{1})-x'-\Delta {\rm x}(x',\bar a,\xi_{1})|-|x-x'|\right]
\le -\frac{c}{2}|x-x'|\,\forall\; x,x'\in {\Mc^{\kappa}_h} \mbox{ s.t. } x-x'\in  2h\Z,
\end{align*}
 which is the required contraction property, whenever $x-x'\in  2h\Z$. The key property is  that $\check X^{x,\check \alpha}-\check X^{x',\check \alpha}$ remains in  $2h\mathbb{Z}$ whenever $x-x'\in 2h\mathbb{Z}$ (by the above calculations jumps of $\check X^{x,\check \alpha}-\check X^{x',\check \alpha}$ lie in $\{-6h,-4h,-2h,0,2h,4h,6h\}$). Then, the same arguments as in the proof of Lemma \ref{lemma: Lipschitz value discounted jump}   imply that one can find $\check L>0$, that only depends on $c$, such that 
\begin{align}\label{eq: check V lambda lip sur 2h} 
|\check V_{\lambda}(x)-\check V_{\lambda}(x')|\le \check L|x-x'|,\; \mbox{ for } x,x'\in {\Mc^{\kappa}_h}\mbox{ s.t. }  x-x'\in  (2h\mathbb{Z}).
\end{align}
In particular, 
\begin{align}\label{eq: check V lambda lin growth sur 2h} 
|\check V_{\lambda}(x)-\check V_{\lambda}(0)|\le \check L|x|,\; \mbox{ for } x\in {\Mc^{\kappa}_h}\cap  (2h\mathbb{Z}).
\end{align}  

2.a.(ii) We now turn to the general case in which the distance between the initial data does not belong to $2h\Z$. Take $x\in \{x_{\circ}-h,x_{\circ}+h\}\cap \overset{\circ}{\Mc^{\kappa}_h}$, for some $x_{\circ}\in {\Mc^{\kappa}_h}\cap  (2h\mathbb{Z})$.  Let $\theta_1$ be the first time at which $|\check X^{x,\check \alpha}_{\theta_{1}}-x|=h$. By the dynamic programming principle,
\begin{align*}
|\check V_{\lambda}(x)-\check V_{\lambda}(x_{\circ})|
&\le \sup_{\check\alpha\in\check{\Ac\;\,}\!\!\!}\E\left[\frac1\lambda (1-e^{-\lambda \theta_{1}})\|r\|_{\Cc^{0}_{b}}+e^{-\lambda \theta_{1}}|\check V_{\lambda}(\check X^{x,\check \alpha}_{\theta_{1}})-\check V_{\lambda}(x_{\circ})|\right]
\\&+\E\left[(1-e^{-\lambda\theta_{1}})|\check V_{\lambda}(x_{\circ})|\right]
 \end{align*}
 in which $\check X^{x,\check \alpha}_{\theta_{1}}-x_{\circ}\in \{-2h,0,2h\}$ and therefore  $|\check V_{\lambda}(\check X^{x,\check \alpha}_{\theta_{1}})-\check V_{\lambda}(x_{\circ})|\le 2\check L|h|$ by \eqref{eq: check V lambda lip sur 2h}. By exhaustive enumeration, one can compute  
 \begin{align*} 
\E[ e^{-\lambda \theta_{1}}]
&=\sum_{k\ge 1} q_{h}(x)^{k-1}(1-q_{h}(x) \left( \int_{0}^{\infty} e^{-\lambda y}\frac1{\Delta t_{h}}e^{-\Delta t_{h}^{-1} y} dy\right)^{k}\\
&=\sum_{k\ge 1} q_{h}(x)^{k-1}(1-q_{h}(x)) \left(\lambda \Delta t_{h}+1\right)^{-k}\\
&= \left(\lambda \Delta t_{h}+1\right)^{-1}(1-q_{h}(x))\frac{\lambda \Delta t_{h}+1}{\lambda\Delta t_{h}+1-q_{h}(x_\circ)}\\
&= \frac{1-q_{h}(x)}{\lambda\Delta t_{h}+1-q_{h}(x)}\le 1.
 \end{align*}
 Since $1-q_{h}(x)\ge (\sigma/L_{b_{1},b_{2}})^{2}\ge (\varsigma/L_{b_{1},b_{2}})^{2}>0$ for all $h$, by Assumption \ref{asmp: diffusion limit existence},     the above   implies that, for some $C>0$, independent on $\lambda$, $\kappa$ and $h$,
 \begin{align*}
|\check V_{\lambda}(x)-\check V_{\lambda}(x_{\circ})|
\le& \sup_{a\in\Ab}\E\left[\frac{\Delta t_{h}}{\lambda\Delta t_{h}+1-q_{h}(x)}\|r\|_{\Cc^{0}_{b}}+2\frac{1-q_{h}(x)}{\lambda\Delta t_{h}+1-q_{h}(x)}\check L|h|\right]
\\
&+\E\left[\frac{\lambda \Delta t_{h}}{\lambda\Delta t_{h}+1-q_{h}(x)}|\check V_{\lambda}(x_\circ)|\right]\\
=&C(\Delta t_{h}+h+\lambda \Delta t_{h}|\check V_{\lambda}(x_{\circ})| ). 
 \end{align*}
 Note that $\lambda \check V_{\lambda}$ is bounded by $\|r\|_{\Cc^{0}_{b}}<\infty$, while $\Delta t_{h}\le h\le |x|$, for $x\ne 0$ and $h$ small enough. 
 Since $x_{\circ}\in {\Mc^{\kappa}_h}\cap  (2h\mathbb{Z})$, the above, combined with \eqref{eq: check V lambda lin growth sur 2h},
 thus shows that 
 \begin{align}\label{eq: check V lambda lin growth sur 2h et h - bords}  
 |\check V_{\lambda}(x)-\check V_{\lambda}(0)|\le \check L'|x| ,\; \forall\;x\in \overset{\circ}{\Mc^{\kappa}_h},
 \end{align}
 for some $ \check L'>0$ that does not depend on $\lambda$, $h$ nor $\kappa$. In the case where $x \in \{z_{1},z_{2\kappa+1}\}$, we can conduct a similar analysis by considering the first time $\theta_{1}$ at which  $\check X^{x,\check \alpha}$ jumps. In this case, $\check X^{x,\check \alpha}_{\theta_{1}}\in \overset{\circ}{\Mc^{\kappa}_h}$ by construction and $|\check X^{x,\check \alpha}_{\theta_{1}}-x_{\circ}|\le 2h$. Given \eqref{eq: check V lambda lip sur 2h}, we retrieve a similar estimate as \eqref{eq: check V lambda lin growth sur 2h et h - bords}. Hence, 
 \begin{align}\label{eq: check V lambda lin growth sur 2h et h} 
|\check V_{\lambda}(x)-\check V_{\lambda}(0)|\le \check L'|x| ,\; \forall\;x\in \Mc^{\kappa}_h,
\end{align}
for some $ \check L'>0$ that does not depend on $\lambda$, $h$ nor $\kappa$.

2.a.(iii) We are now in position to show that $[\bar \Wf^{\kappa}_{h}]_{\Cc^{0}_{\rm lin}(\Mc^{\kappa}_h)}$ does not depend on $\kappa$ nor $h$.
 Using \eqref{eq: check V lambda lin growth sur 2h et h} and the arguments of Lemma \ref{lem: jump ergodic rho HJB W}, we obtain that, after possibly passing to a subsequence,
 $(\check V_{\lambda}-\check V_{\lambda}(0))_{\lambda>0}$ converges pointwise, as $\lambda \to 0$, to $\bar \Wf^{\kappa}_{h}-\bar\rho^{\kappa,*}_h\Delta t_{h}$ and that the latter satisfies 
\begin{align}\label{eq: borne bar Wc lipschitz} 
|\bar \Wf^{\kappa}_{h}(x)-\bar\rho^{\kappa,*}_h\Delta t_{h}|\le  \check L'|x|,\;x\in \Mc^{\kappa}_h.
\end{align}
 
2.b. To complete the proof, it remains to appeal to the stability of viscosity solutions,  and use comparison results in the class of semi-continuous  super/sub-solutions with linear growth. Let $(\kappa_{h})_{h>0}$ be as in the statement of the Proposition.  By \eqref{eq: borne bar Wc lipschitz}, $(\bar \Wf^{\kappa_{h}}_{h}-\bar\rho^{\kappa_h,*}_h\Delta t_{h})_{h>0}$ admits  locally bounded relaxed semi-limits 
$$
\bar \Wf^{\infty *}_{0}(x):=\limsup_{x'\to x, \;h\downarrow 0} \bar \Wf^{\kappa_{h}}_{h}(x')-\bar\rho^{\kappa_h,*}_h\Delta t_{h} \;,\;\bar \Wf^{\infty}_{0 *}(x):=\liminf_{x'\to x, \;h\downarrow 0} \bar \Wf^{\kappa_{h}}_{h}(x')-\bar\rho^{\kappa_h,*}_h\Delta t_{h} .
$$
 which take the value $0$ at $0$, recall \eqref{eq: edp diff finie matriciel normalisation rho}, and have linear growth. 
 We can then use \eqref{eq: edp diff finie matriciel}, Proposition \ref{prop : conv rho num} and standard stability arguments for viscosity solutions, see e.g.~\cite[Section 3]{fleming1989existence}, to deduce that $\bar \Wf^{\infty}_{0 *}$ and $\bar \Wf^{\infty *}_{0}$  are respectively   viscosity super- and subsolutions of \eqref{eq: HJB ergodic jump}. We claim that $\bar \Wf^{\infty *}_{0}=\bar \Wf+g$ for some $g\in \R$. Then, we will deduce that $g= \bar \Wf^{\infty *}_{0}(0)-\bar \Wf(0)=0$ by construction. The same argument can be used to prove that $\bar \Wf^{\infty}_{0*}=\bar \Wf$. To prove the above, we follow the arguments of \cite[Proof of Theorem 3.1]{barles2016unbounded}. We first fix $R>0$ and let $B_{R}:=B_{R}(0)$ be the open ball of radius $R$ centered at $0$. Set $g:=\max_{\partial B_{R}} (\bar \Wf^{\infty*}_{0}-\bar \Wf)$.
Since $\Phi:=\bar \Wf^{\infty*}_{0}-\bar \Wf-g$ has linear growth, see \eqref{eq: estimee bar W} and above, we can fix $\iota>0$, independently of $R$, such that $x\mapsto \Phi(x)-\iota |x|^{2}$ has a maximum point $\hat x_{R}$ on   $(B_{R})^{c}$. If $\sup_{(B_{R})^{c}}\Phi>0$, then, for $\iota>0$ small enough, we have $\Phi(\hat x_{R})-\iota |\hat x_{R}|^{2}>0$ and therefore $\hat x_{R}$ lies in the interior of  $(B_{R})^{c}$.
We now use the subsolution property of $\bar \Wf^{\infty *}_{0}$ and the fact that $\bar \Wf$ is a smooth solution of \eqref{eq: HJB ergodic jump} to obtain 
\begin{align*} 
0\le&  \sup_{\bar a \in \Ab}\left\{\bar \Lc^{\bar a} \bar \Wf(\hat x_{R})+r(\hat x_{R},\bar a)-\bar \rho^{*}+\iota \left(2\mu(\hat x_{R},\bar a)  \hat x_{R} + \sigma^{2}\right)\right\}
\\
\le & \iota \sup_{\bar a \in \Ab}\left\{2\mu(\hat x_{R},\bar a)  \hat x_{R} + \sigma^{2} \right\}.
\end{align*}
Using \eqref{eq: hyp R large},  we get a contradiction for $R$ large enough. This shows that $\sup_{(B_{R})^{c}}\Phi\le 0$. Now the fact that $\max_{B_{R}\cup \partial B_{R}}\Phi= 0$ follows by the maximum principle applied to \eqref{eq: HJB ergodic jump} on $B_{R}$ with Dirichlet boundary conditions on $\partial B_{R}$. Moreover, $\Phi$ is a viscosity subsolution of
\begin{align*} 
0\le&  \sup_{\bar a \in \Ab} \bar \Lc^{\bar a} \Phi.
\end{align*}
We can thus now appeal to the strong maximum principle, see e.g.~\cite[Theorem 1]{kawohl1998strong}, to deduce that $\bar \Wf^{\infty}_{0}-\bar \Wf-g=\Phi\equiv 0$, which concludes the proof.

\end{proof}
  
\section{Application to high-frequency auctions}\label{sec:numerical example}
 
 \subsection{Motivation and setting}\label{subsec: example motivation}
 
 Web display advertising is a typical example of real-world high-frequency pure jump control problems \cite{fernandez2017optimal}. The ad-spaces are sold by algorithmic platforms in automated auctions which occur at the dozen microsecond scale \cite{ostrovsky_reserve_2011}. The frequency imposes computational issues on optimisation problems in this industry, while at the same time the volume creates a significant monetary incentive for all parties to engage in revenue maximisation.

 Consequently, the question of the strategic behaviour of bidders in repeated auctions in the face of learning sellers has been a popular topic in contemporary auction theory, see e.g.~\cite[\textsection~4]{nedelec2022learning} for a survey. A rich line of work has focused on asymetric problems where one player is signifcantly more patient than the other \cite{amin2013learning, nedelec2019learning}. This asymetry reduces game theoretic considerations in the analysis to optimisation or control problems. In this example we take interest in the case where the buyer is infinitely patient (it optimises an ergodic objective), while the seller's algorithm has an effectively finite memory.
 
 Given these horizons, the format of the auction will strongly influence the behaviour of bidders and sellers when they seek to maximise their profit, see e.g.~\cite{krishna_auction_2009} for some generic examples. While it is a sub-optimal auction format for the seller \cite{myerson_optimal_1981}, we choose to focus on the second price auction format here. Indeed, there are unsurmoutable difficulties in learning the optimal auction format \cite{morgenstern_pseudo-dimension_2015}, and second price is in practice a common compromise between tractability and optimality \cite{roughgarden_minimizing_2019}. 
 
 Recalling the notations introduced in Example \ref{example : mean reverting 1}, in a second price auction (with reserve) the bidder wins if it outbids the competition $e_4$ and the reserve price $x$, and pays the smallest bid which still wins the auction, i.e.~$x\vee e_4$. 
 As a result of the time-scale there is little time in practice to perform computations to determine the bid, and one typically relies on using a precomputed a function of the value to bid when an auction arrives and $e_3$ is revealed. More formally, the bid  should be predictable. For simplicity, in this example, we consider a linear shading of the value: $a  e_2$, where the control input value $a$ is the shading factor. Consequently, we have the (expected) reward function
 \begin{align}\label{eq: auction example  reward}
    r(x,a):=\int(e_2-x\vee e_4)\1_{\{a e_2\ge x\vee e_4 \} }\nu(\de e)\,.
 \end{align}
 Such auctions are well defined only for positive bids. Thus, we impose $a\in\RR_+$.
 
 Within the constraints of a second price auction, maximising profits corresponds to tuning the reserve price $x$. Dynamically optimising the reserve price is a difficult problem even for a stationary bidder, see e.g.~\cite{croissant2020real,amin2013learning}. To simplify, we consider the mean-reverting dynamic introduced in Example \ref{example : mean reverting 1}. For some $\eta=\varepsilon^{-1}$ fixed, this dynamic is given by \eqref{eq: def X jump} with 
 \begin{align}\label{eq: jump  params of auction example}
   b:=b_\epsilon = \eps b_1 +\sqrt{\eps}b_2\mbox{ where } b_1(x,a,e) := e_1(ae_2-x)\;\mbox{ and } b_2(x,a,e):= e_1e_3.
 \end{align} 
 
In the above framework, the noise $e_1$ models seller aggressivity as an exogenous randomness, while $e_3$ models the seller's internal randomisation aimed at increasing robustness to strategic play. Under the conditions outlined in Example \ref{example : mean reverting 1}, we can choose for simplicity
$$
\nu(\de e)=\prod_{i=1}^{4} f_{i}(e_{i})  \de e_{i}
$$
in which
 \[f_1\sim \texttt{Unif}(0,1)\quad \&\quad f_3\sim \Nc(0,\sigma_0^2) \]
 with $\sigma_0=\frac12$. %\lc{C'est pas correct d'ecrire ça car $e_1$ n'est pas une VA, il faut trouver un moyen de l'exprimer clairement en Mathmode. peut etre $\nu(\cdot\times\RR^3)\overset{d}{=} \texttt{Unif}(0,1)$ ? }
 
Second price auctions without reserve leave the most revenue on the table when the buyers are highly asymmetrical, we therefore study 
\[ f_2\sim \texttt{LogNorm}(\mu_1,\sigma_1)\quad\&\quad f_4\sim \texttt{Unif}(0,1) \]
with $\mu_1=0$ and $\sigma_1=\frac12$.
Note that empirical observations \cite{ostrovsky_reserve_2011} suggest log-normals are a realistic model for values.

 Assumption \ref{asmp: basics}, and the remaining conditions in Example \ref{example : mean reverting 1} for Assumptions \ref{asmp: controllability for jumps} and \ref{asmp: mean rever} are easily seen to hold under the above choices. Therefore, this pure jump process admits, and converges to, a diffusion limit by Theorem \ref{thm: delta epsilon bound on discounted value difference}, in particular it is easily checked that the coefficients of the limit diffusion are given by 
 \begin{align}\label{eq: diff params of auction examples}
   \mu(x,a):= \frac12\left(aC-x\right) \mbox{ and } \sigma(x):= \frac{\sigma_0}{\sqrt{3}}\,,
 \end{align}
 where $C:=\exp\left(\mu_1+\frac{\sigma_1^2}{2}\right)$. It is clear from  \eqref{eq: auction example  reward} and \eqref{eq: diff params of auction examples}, that values of $a$ larger than $1$ cannot be optimal, therefore  we  fix $\Ab=[0,1]$.

 \subsection{Numerical Resolution of the HJB Equations}
 
 Using this example motivated by high-frequency auctions we illustrate in this section the benefits of the diffusion limit problem in regards to numerical computation. We use the method detailed in Section \ref{sec: numerical resolution} to solve numerically \eqref{eq: HJB ergodic diffusive thm}, with parameters $\mu$ and $\sigma$ given by \eqref{eq: diff params of auction examples}. Throughout, we will take $\kappa_h:=h^{-1/4}$, for which $h\le \left(\frac{\sigma}{2}\right)^{\frac83}$ suffices to uphold condition \eqref{eq: cond kappa h2 le demi - L} since we have $[\mu]_{\Cc^0_{\rm lin}}\le(1+e^{1/8})/2$. Note that, with $f_1,f_2,f_3$ as above, $p$ in Example \ref{example : mean reverting 1} and Example \ref{example : mean reverting 2} can be taken to be any positive real number.
 
 %\lc{Dans cet exemple on peut fixer le $p$ qu'on veut, et les conditions se reduisent à $kh\ge h^{-\frac{1+\epsilon}{p_{\bar\xi}-1}}$, donc ça marche.}
 
 In comparison to \eqref{eq: HJB ergodic diffusive thm}, solving \eqref{eq: HJB ergodic jump thm} with coefficients given by \eqref{eq: jump  params of auction example} is  complicated by the computation of the integral term. In many situations, when $\nu$ is a non-atomic measure with known closed form, quadrature would be the preferred method for resolution, see e.g.~\cite{cont_financial_2004}. In this example, this quadrature would be $4$-dimensional, which is somewhat expensive. %However, in this example, the (non-linear) combination of several noise sources prohibits closed form expressions for the density of $\nu$, which effectively rules out quadrature. \bru{[bof : est produit de densit\'e...]}

 In contrast, the relatively simple form of the combination of independent noise sources makes Monte Carlo simulation competitive in this specific example. Fixing a grid $\Mc_{\ve,h_\ve}^{\kappa_\ve}$ analogous to the one in Section \ref{sec: numerical resolution}, we compute the empirical transition distribution $p_{N_\ve}^{\ve,h_\ve}:(x,a)\in\Mc_{\ve,h_\ve}^{\kappa_\ve}\times\Ab\to p^{\ve,h_\ve}_{N_\ve}(\cdot;x,a)\in\varDelta_{2\kappa_\ve+1}$, where $\varDelta_{2\kappa_\ve+1}$ is the $2\kappa_\ve+1$-dimensional probability simplex, based on $N_\ve$ independent samples from each law, by projecting sample transitions onto $\Mc_{\ve,h_\ve}^{\kappa_\ve}$. We then approximate for \eqref{eq: HJB ergodic jump thm} by solving the analogue of \eqref{eq: edp diff finie matriciel}, i.e. finding ($\rho_{\varepsilon,h_\ve}^{\kappa_\ve,*}$, $\Wc^{\kappa_\ve}_{\ve,h_\ve}$),  $\Wc^{\kappa_\ve}_{\ve,h_\ve}:=(\Wf_{\ve,h_\ve}^{\kappa_\ve}(z_i))_{1\le i\le 2\kappa_\ve+1}$, solving
 \begin{align}
    0&= \max_{{\rm a}\in{\rm  A}}\left\{ \frac{1}{\varepsilon}(P_{N_\ve,\ve,h_\ve}^{\rm a}-\bm{I}_{2\kappa_\ve+1})\Wc^{\kappa_\ve}_{\ve,h_\ve} - e\Wf^{\kappa_\ve}_{\ve,h_\ve}(0) +{\mathscr R}({\rm a})\right\} \label{eq: num resolution jump HJB}\\
    \rho_{\varepsilon,h_\ve}^{\kappa_\ve, *}&= \Wf^{\kappa_\ve}_{\ve,h_\ve}(0) \label{eq: num resolution jump rho}
 \end{align}
 by policy iteration, where $P_{N_\ve,\ve,h_\ve}^{{\rm a}}= (p_{N_\ve}^{\ve,h_\ve}(z_j;z_i,{\rm a}(z_i)))_{1\le i,j\le 2\kappa_\ve+1}$, $\bm{I}_{2\kappa_\ve+1}$ is the $2\kappa_\ve+1$-dimensional identity matrix, and ${\mathscr R}(\rm a)$ is as in Section \ref{sec: numerical resolution}. 
 
As $\varepsilon\to0$, all the transitions concentrate into a ball of size $\varepsilon^{\frac12}$ with a drift of size $\varepsilon$, meaning that the mesh must refine faster than $\varepsilon$, in order to avoid degeneracy. Therefore, we consider the sequence of grids $\{\Mc^{\kappa_\ve}_{\ve,h_\ve}\}_{\ve\ge0}$, with $\Mc^{\kappa_\ve}_{\ve,h_\ve}=\{y_i= -10 + (i-1)h_\ve, 1\le i \le 2\kappa_\ve+1 \}$ with $h_\ve=\ve^{\frac32}$ and $\kappa_\ve=N_\ve=20\ve^{-\frac32}$.  Note that the refinement of the grid $\Mc^{\kappa_\ve}_{\ve,h_\ve}$ as $\ve\to0$ does not imply that the accuracy of the scheme increases as $\varepsilon\to0$, the increasingly fine resolution is a \emph{cost} incurred due to $\eta_\ve$. The increase in this cost becomes impossible to maintain as $\varepsilon$ becomes small, this is illustrated by Figure \ref{fig: comp time}: it rises at a rate $\ve^{-\frac32}$.

 % Figure of comp speeds
 \begin{figure}[ht]
   \centering
   \includegraphics[width=.5\textwidth]{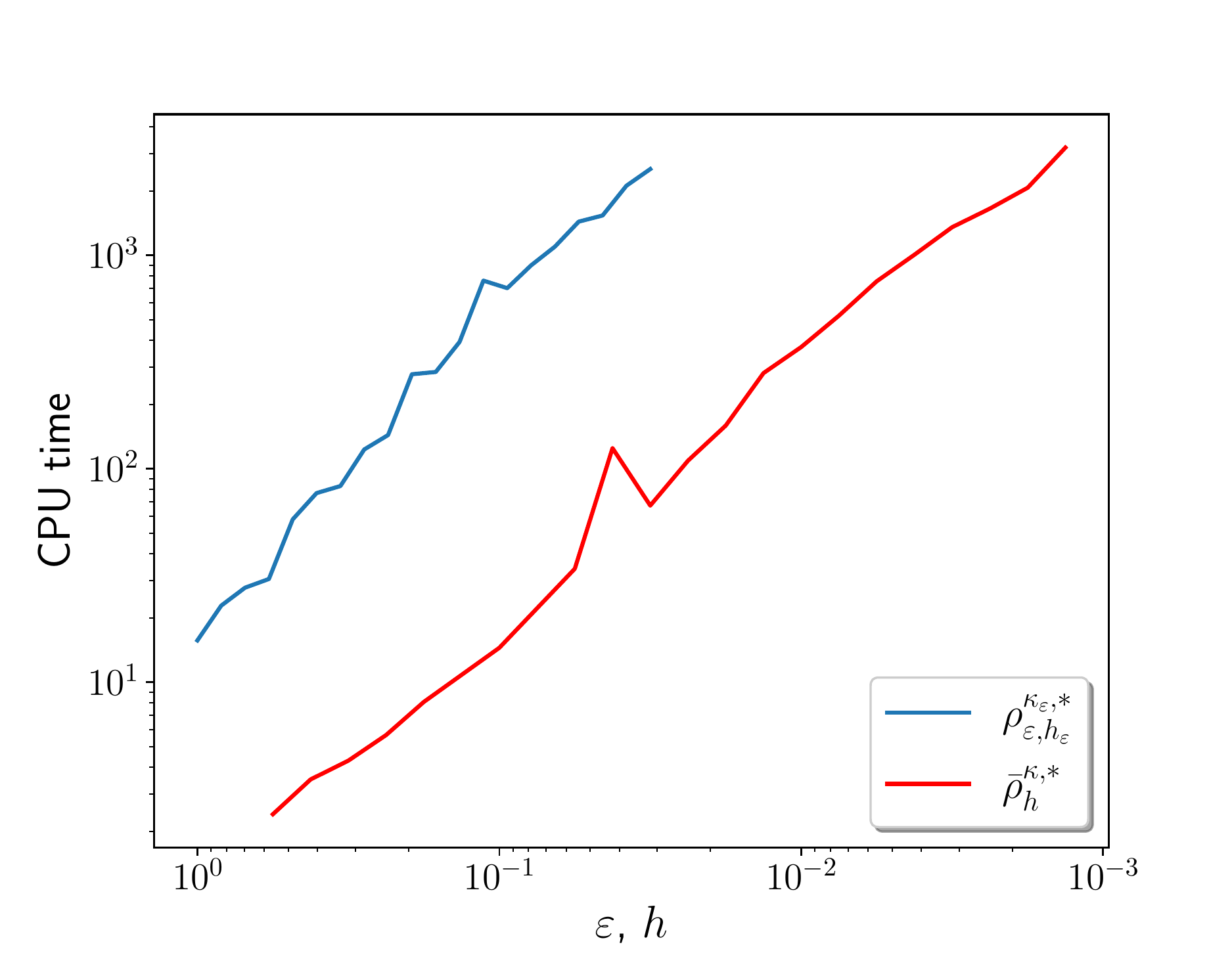}
   \caption{Comparison of computation costs for \eqref{eq: num resolution jump HJB} ($\rho_{\ve,h_\ve}^{\kappa_\ve,*}$) and \eqref{eq: edp diff finie matriciel} ($\bar\rho^{\kappa,*}_h$).}\label{fig: comp time}
 \end{figure}

In contrast, using the diffusion limit by combining Sections \ref{sec: diffusion} and \ref{sec: numerical resolution}, allows us to solve the problem to a high precision for relatively cheap. Figure \ref{fig: rho approximation} demonstrates the convergence in value of Theorem \ref{thm: delta epsilon bound on discounted value difference}, with a rate of $\ve^{\frac{1}{2}}$.
 
Explicit computation for an approximately optimal control using \eqref{eq: smoothed numerical max on a} is impractical for the $r$ given in \eqref{eq: auction example reward}, due to its lack of a closed form derivative to apply first order conditions. Nevertheless, in order to illustrate the bounds in Propositions \ref{prop: conv politique num}, we resort to numerical approximation. 
We fix a grid $\Ab_\Gamma:=\{i\Gamma^{-1},0\le i\le \Gamma\}$ on $\Ab=[0,1]$, fixing $\Gamma=100$, and then solve the maximum in \eqref{eq: smoothed numerical max on a} on $\Ab_\Gamma$ instead of $\Ab$. Contrary to Section \ref{sec: numerical resolution}, we only compute it on $\Mc^{\kappa_h}_{h}$. This yields a map $\bm{a}_h^\Gamma:\Mc_h^{\kappa_h}\to\Ab_\Gamma$, which can be viewed as a vector of controls associated to $\Mc^{\kappa_{h}}_h$.

%to obtain $\bar {\rm a}_h^N\in\mathrm{A}$. This can be viewed as a finite difference version of \eqref{eq: smoothed numerical max on a}. 

From here, we define $\breve {\rm a}_h^\Gamma\in\mathrm{A}$ by $\breve{\rm a}_h^\Gamma:=\bm{a}_h^\Gamma(\Pi_{\Mc^{\kappa_h}_h}(\cdot))$ where $\Pi_{\Mc^{\kappa_h}_h}:\RR\to\Mc^{\kappa_h}_h$ is the projector onto the grid $\Mc^{\kappa_h}_h$, consider the solution $\breve{X}^{\kappa_h, h, \Gamma}$ of
\[\breve{X}^{\kappa_h, h, \Gamma}_\cdot= \int_0^\cdot \int_{\R^{d'}} b_\ve(\breve X^{\kappa_h,h, \Gamma}_{s-},\breve{\rm a}^{\Gamma}_{h}(\breve X^{\kappa_h,h,\Gamma}_{s-}),e)N(\de e,\de s)\,, \]
and evaluate $\rho_\ve(0,\breve \alpha^{\Gamma}_{h})$ for each $\ve$, where $\breve \alpha^{\Gamma}_{h}:=\breve{\rm a}^{\Gamma}_{h}(\breve X^{\kappa_h,h,\Gamma}_{\cdot-})$. In pratice, we fix $T=1000$  and compute 
\[ \frac\ve T\EE\left[\int_0^T r(\breve{X}^{\kappa_{h}, h, \Gamma}_{t-}, \breve{\rm a}^{\Gamma}_{h}(\breve X^{\kappa_h,h,\Gamma}_{t-}))\de N_t\right]\]
by Monte Carlo with $1000$ trajectories\footnote{Computing an ergodic average over each trajectory is very numerically expensive for small values of $\ve$, reducing the feasible amount of samples. In spite of the noise, the slope $\frac12$ is still visible in Figure \ref{fig: policy suboptimality}.} of $\breve{X}^{\kappa_{h}, h, \Gamma}$.

In spite of the noise and the simple approximate control scheme, we recover the bounds of Propositions \ref{prop: conv politique num}, in terms of $\ve$ in Figure \ref{fig: policy suboptimality}, with $h=0.002667$, the smallest $h$ on Figure \ref{fig: comp time}. Note that this convergence rate matches the one of Figure \ref{fig: rho approximation}.
 
 % img/convergence_rhos.pdf
 
 \begin{figure}[ht]
   \centering
   \begin{minipage}[t]{.45\textwidth}
       \centering
       \includegraphics[width=\textwidth]{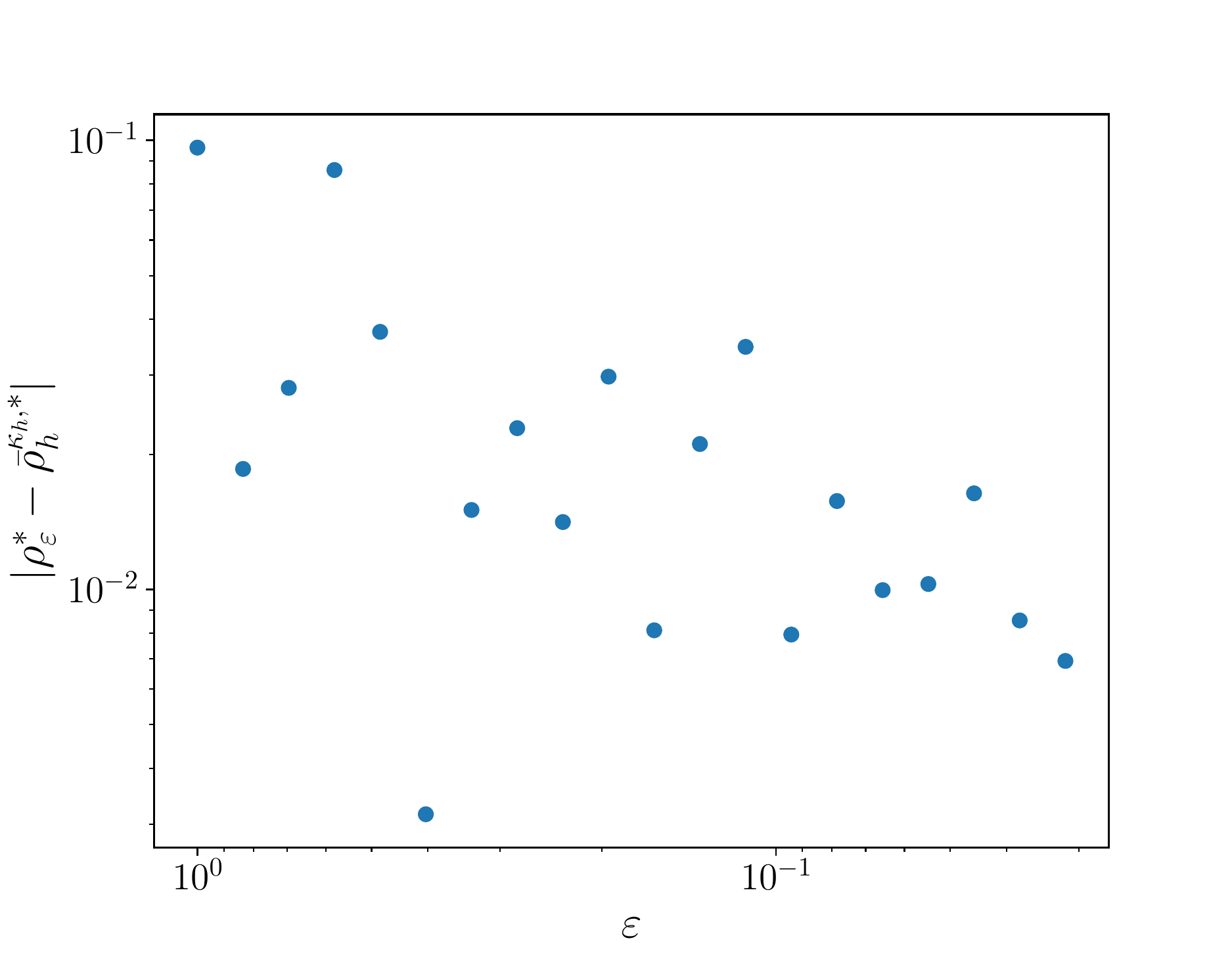}
       \caption{Approximation error of $\rho_\varepsilon^*$ by $\bar\rho_h^{\kappa_h,*}$.}\label{fig: rho approximation}
   \end{minipage}
   ~~
   \begin{minipage}[t]{.45\textwidth}
       \centering
       \includegraphics[width=\textwidth]{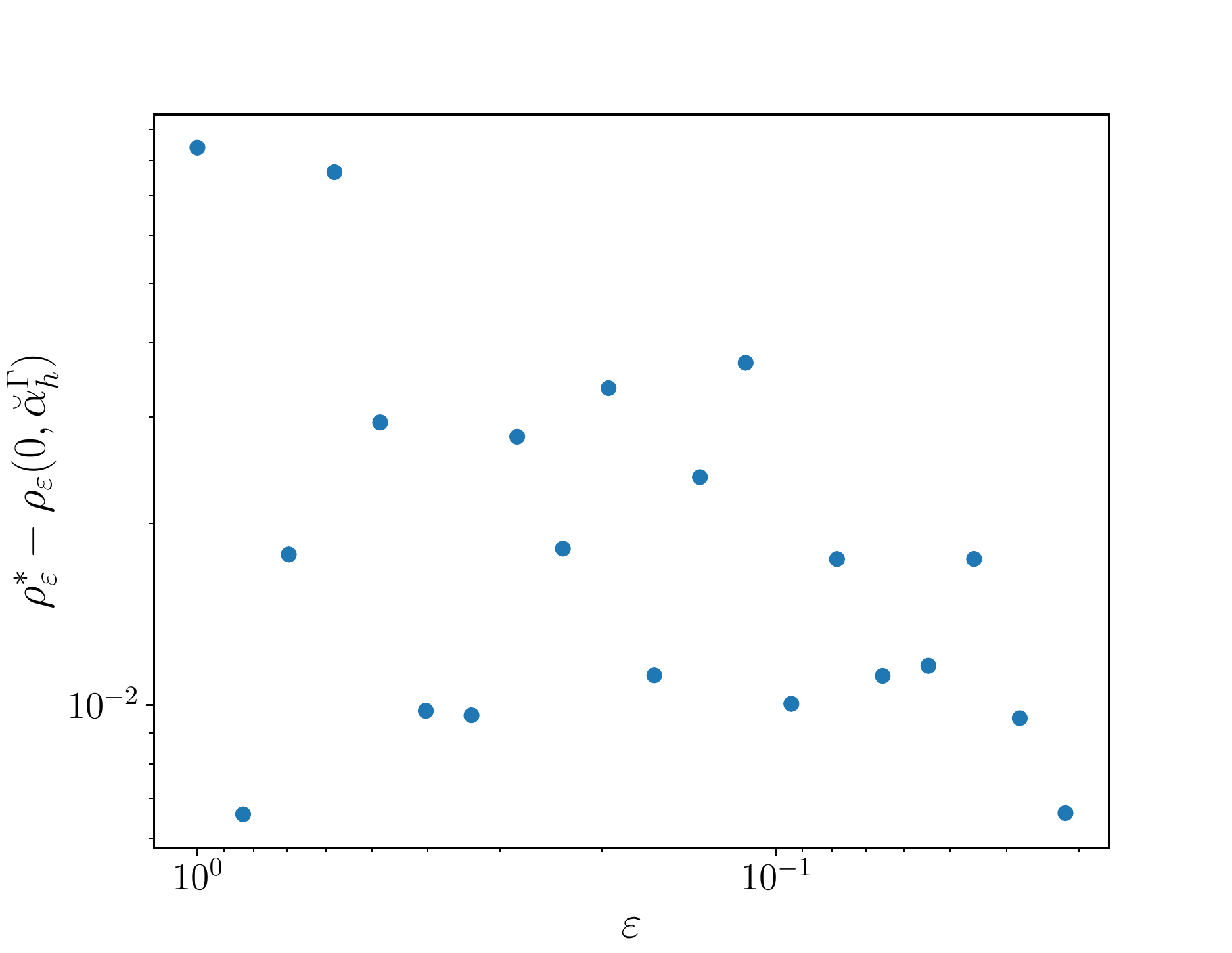}
       \caption{Suboptimality of the diffusive control relative to $\rho_\varepsilon^*$.}\label{fig: policies}\label{fig: policy suboptimality}
   \end{minipage}
 \end{figure}

\appendix
 
\section*{Appendix}
\section{Proof of Theorem \ref{thm: jump ergodic rho HJB + verif}}\label{app:jump}

In this Appendix, we first provide the proof of Theorem \ref{thm: jump ergodic rho HJB + verif}. It follows a standard route. We adapt arguments of  \cite{arisawa1998ergodic} and \cite{arapostathis2012ergodic} to our context.
 
We first show that {$(V_{\lambda})_{\lambda\in (0,1)}$ is equi-Lipschitz} continuous, under the contraction condition of Assumption \ref{asmp: controllability for jumps}.

\begin{Lemma}\label{lemma: Lipschitz value discounted jump}
Let Assumptions \ref{asmp: basics} and \ref{asmp: controllability for jumps} hold, then
$$
|V_{\lambda}(x)-V_{\lambda}(x')|\le L_{V}|x-x'|, \mbox{ for } x,x'\in \R^{d},  \lambda \in (0,1), 
$$
in which
$$
L_{V}:=\frac{  L_{{b,r}} p_\zeta}{C_{\zeta}}\left(\frac{L_\zeta}{\ell_\zeta}\right)^{\frac{1}{p_\zeta}}.
$$
\end{Lemma}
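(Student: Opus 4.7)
The plan is to establish the Lipschitz estimate on $V_\lambda$ by showing that any two trajectories $X^{x,\alpha}$ and $X^{x',\alpha}$ driven by the same control $\alpha$ and the same random measure $N$ contract exponentially in the pseudo-metric $\zeta$, and then transferring this contraction to $J_\lambda$, and hence to $V_\lambda$, via the Lipschitz continuity of $r$. Fix $\lambda\in(0,1)$, $\alpha\in\Ac$, and $x,x'\in\R^d$. Since both processes are pure jump and share the same jump times (those of $N$), and since $\zeta$ is merely continuous, I would start from the exact pathwise telescoping identity
\begin{equation*}
\zeta(X^{x,\alpha}_t,X^{x',\alpha}_t)-\zeta(x,x')=\int_0^t\!\!\int_{\R^{d'}}\!\Delta_{s,e}\zeta\,N(\de e,\de s),
\end{equation*}
where $\Delta_{s,e}\zeta$ is the $\zeta$-increment produced by a jump of mark $e$ at time $s$. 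No form of It\^o's formula is required here; the identity simply collects the (a.s.~finitely many on $[0,t]$) jumps of $\zeta$ along the joint trajectory.

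Taking expectation (after a standard localisation justified by the polynomial growth of $b$ in Assumption \ref{asmp: basics}, which gives finite moments of $\sup_{[0,t]}\abs{X^{x,\alpha}}$) and replacing $N$ by its predictable compensator $\eta\nu(\de e)\,\de s$, Assumption \ref{asmp: controllability for jumps}(ii) applied under the integral yields the closed integral inequality
\begin{equation*}
\E[\zeta(X^{x,\alpha}_t,X^{x',\alpha}_t)]\le \zeta(x,x')-C_\zeta\int_0^t\E[\zeta(X^{x,\alpha}_s,X^{x',\alpha}_s)]\,\de s,
\end{equation*}
from which Gronwall's lemma delivers $\E[\zeta(X^{x,\alpha}_t,X^{x',\alpha}_t)]\le e^{-C_\zeta t}\zeta(x,x')$. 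Sandwiching with the two-sided $p_\zeta$-power bounds from Assumption \ref{asmp: controllability for jumps}(i) and applying Jensen's inequality (valid since $p_\zeta\ge 1$) then produces the expectation-level contraction
\begin{equation*}
\E[\abs{X^{x,\alpha}_t-X^{x',\alpha}_t}]\le \left(\frac{L_\zeta}{\ell_\zeta}\right)^{1/p_\zeta}e^{-C_\zeta t/p_\zeta}\abs{x-x'}.
\end{equation*}

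To conclude, I would invoke the equivalent $\de s$-integral representation of $J_\lambda$ from Remark \ref{rem:reec V lambda et T}, combined with the Lipschitz bound on $r(\cdot,a)$ uniform in $a$ from Assumption \ref{asmp: basics}, so that $\abs{J_\lambda(x,\alpha)-J_\lambda(x',\alpha)}$ is bounded by $L_{b,r}$ times the Laplace-type time integral of the previous display against $e^{-\lambda s}$. The elementary bound $(\lambda+C_\zeta/p_\zeta)^{-1}\le p_\zeta/C_\zeta$ produces exactly the constant $L_V$ stated in the lemma, uniformly in $\alpha\in\Ac$ and in $\lambda\in(0,1)$; taking the supremum over $\alpha$ on both sides closes the argument.

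The main obstacle is the first step: because $\zeta$ is only continuous, no differential calculus is directly available, and the identity must genuinely be read as an exact pathwise sum over the jump times of $N$. The real technical care goes into the localisation (stopping times $\tau_n:=\inf\{t\ge 0:\abs{X^{x,\alpha}_t}\vee\abs{X^{x',\alpha}_t}\ge n\}$) that promotes this pathwise sum to a compensated stochastic integral whose expectation vanishes, and in justifying the passage to the limit $n\to\infty$ using the polynomial growth of $\zeta$ from Assumption \ref{asmp: controllability for jumps}(i) combined with the moment control on $X^{x,\alpha}$ implied by Assumption \ref{asmp: basics}. Everything else is standard manipulation.
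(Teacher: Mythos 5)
Your proposal is correct and follows essentially the same route as the paper's proof: pathwise jump decomposition of $\zeta(X^{x,\alpha},X^{x',\alpha})$, localisation and compensation, Assumption~\ref{asmp: controllability for jumps}(ii) under the integral, an ODE comparison/Gronwall step yielding $\E[\zeta(X_t,X'_t)]\le e^{-C_\zeta t}\zeta(x,x')$, Jensen with the two-sided bound from~\ref{asmp: controllability for jumps}(i), and finally the $\de s$-representation of $J_\lambda$ from Remark~\ref{rem:reec V lambda et T} together with the Lipschitz bound on $r$. Your remark that the jump decomposition is a pure telescoping identity (not requiring $\zeta\in\Cc^2$) is a fair precision of the paper's informal invocation of It\^o's lemma, but it is the same underlying computation.
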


\begin{proof} Fix $x,x'\in \R^{d}$, together with $\alpha\in \Ac$. By Assumption \ref{asmp: controllability for jumps}, 
  \begin{align*} 
 & \eta\int_{\R^{d'}} \left\{\zeta (X_{{\cdot}-}+b(X_{{\cdot}-},\alpha,e),X_{-}'+b(X_{{\cdot}-}',\alpha,e))-\zeta(X_{{\cdot}-},X'_{{\cdot}-})\right\}\nu(\de e) \le -C_{\zeta}\zeta(X_{{\cdot-}},X'_{{\cdot-}})
   \end{align*}
  in which $(X,X'):=(X^{x,\alpha},X^{x',\alpha})$ and $(X_{{\cdot}-},X_{{\cdot}-}')$ is its left-limit. 
Applying It\^{o}'s Lemma then implies that 
  \begin{align*}
      \zeta(X_t,X'_t)    \le \zeta(x,x') -C_{\zeta}\int_0^{t}\zeta(X_s,X'_s)\de s +M_{t},\;t\ge 0,
  \end{align*}
  where $M$ is a local martingale. Upon using a localisation argument, {recall (i) of Assumption \ref{asmp: controllability for jumps}},  taking the expectation and using an immediate comparison result for ODEs leads to 
 \begin{align}\label{eq: pre borne contractance X-X'} 
       \EE[\zeta(X_t,X'_t)]\le \zeta(x,x')e^{-C_{\zeta} t} ,\;t\ge 0.
  \end{align}
  It remains to use (i) of Assumption \ref{asmp: controllability for jumps}  to deduce that 
  \begin{align}\label{eq: borne contractance X-X'} 
  \EE[|X_t-X'_t|^{p_{\zeta}}]\le \frac{L_{\zeta}}{\ell_{\zeta}}|x-x'|^{p_{\zeta}}e^{-C_{\zeta}t}\,,t\ge 0.
  \end{align}

Combining the above  with {Remark \ref{rem:reec V lambda et T}}, the Lipschitz continuity assumption on $r$, {Assumption \ref{asmp: basics}}, and using Jensen's inequality then leads to  
    \begin{align*}
     \abs{J_\lambda(x,\alpha)- J_\lambda(x',\alpha)}&\le    L_{b,r}\int_0^\infty e^{-\lambda t} \EE\left[ \abs{X_t-X_t'}\right] \de t\\
    &\le   L_{b,r}\int_0^\infty e^{-\lambda t}\EE\left[ |X_t-X_t'|^{p_{\zeta}}\right]^{\frac{1}{p_\zeta}}\de t\\
    & \le  L_{b,r}\left(\frac{L_\zeta}{\ell_\zeta}\right)^{\frac{1}{p_\zeta}} \int_0^\infty \abs{x-x'}e^{-\lambda t - \frac{C_{\zeta}}{p_\zeta} t}\de t\\
    &\le  \frac{L_{b,r} p_\zeta}{C_{\zeta}+\lambda p_\zeta}\left(\frac{L_\zeta}{\ell_\zeta}\right)^{\frac{1}{p_\zeta}}\abs{x-x'}\,.
    \end{align*}
    Since $\abs{V_\lambda(x) - V_\lambda(x')}\le \sup_{\alpha\in \Ac}  \abs{J_\lambda(x,\alpha)- J_\lambda(x',\alpha)}$ and $\lambda p_{\zeta}\ge 0$, this  completes the proof.
    \end{proof}
     
    We now use Assumption  \ref{asmp: mean rever} to provide a uniform (in time and the control) estimate on the diffusion \eqref{eq: def X jump}. 
     \begin{Lemma}\label{lemma: borne EXq} Let Assumptions \ref{asmp: basics} and   \ref{asmp: mean rever}  hold. Then, for all $(x,\alpha)\in \R^{d}\x \Ac$, 
\begin{align*} 
 \E[|X^{x,\alpha}_{t}|^{p_{\xi}}]&\le \frac{1}{\ell_{\xi}} \left\{e^{-C^{1}_{\xi} t}L_{\xi}|x|^{p_{\xi}}+\frac{C^{2}_{\xi}}{C^{1}_{\xi}}(1-e^{-C^{1}_{\xi} t})\right\} ,\;t\ge 0.
\end{align*}
 \end{Lemma}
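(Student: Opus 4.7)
The plan is to show that $t \mapsto \EE[\xi(X^{x,\alpha}_t)]$ satisfies a linear differential inequality via Dynkin's formula, then apply Gronwall and convert back to $|X|^{p_\xi}$ using the two-sided bound on $\xi$.

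First, I would write out Dynkin's formula for the pure jump process \eqref{eq: def X jump} applied to $\xi$: for each $t \ge 0$,
\[
\xi(X^{x,\alpha}_t) = \xi(x) + \int_0^t \eta \int_{\R^{d'}} \bigl[\xi(X^{x,\alpha}_{s-} + b(X^{x,\alpha}_{s-},\alpha_s,e)) - \xi(X^{x,\alpha}_{s-})\bigr]\nu(\de e)\,\de s + M_t,
\]
where $M$ is the (purely discontinuous) local martingale part obtained by compensating $N$. By assertion (ii) of Assumption \ref{asmp: mean rever}, the drift term is pointwise bounded above by $-C^1_\xi \xi(X^{x,\alpha}_{s-}) + C^2_\xi$, so
\[
\xi(X^{x,\alpha}_t) \le \xi(x) - C^1_\xi \int_0^t \xi(X^{x,\alpha}_{s})\,\de s + C^2_\xi\, t + M_t.
\]

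The main (and really only) technical point is handling the local martingale $M$. I would introduce the localising sequence $\tau_n := \inf\{t\ge 0 : |X^{x,\alpha}_t| \ge n\}$. Since Assumption \ref{asmp: basics} (linear growth of $b$, together with $N$ having finite intensity $\eta$) ensures, as pointed out in Remark \ref{rem:reec V lambda et T}, that $\sup_{s\le t}|X^{x,\alpha}_s|$ has moments of every order, we have $\tau_n \to \infty$ almost surely and the stopped process $M^{\tau_n}$ is a true martingale with zero expectation. Taking expectations at $t\wedge \tau_n$, using (i) of Assumption \ref{asmp: mean rever} to control $\xi$ by a polynomial in $|x|$, and letting $n\to\infty$ via monotone/dominated convergence yields
\[
\EE[\xi(X^{x,\alpha}_t)] \le \xi(x) - C^1_\xi \int_0^t \EE[\xi(X^{x,\alpha}_s)]\,\de s + C^2_\xi\, t.
\]

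Then I would apply Gronwall's inequality (in its integral form, or equivalently compare with the ODE $\dot f = -C^1_\xi f + C^2_\xi$) to the continuous function $t\mapsto \EE[\xi(X^{x,\alpha}_t)]$ to obtain
\[
\EE[\xi(X^{x,\alpha}_t)] \le e^{-C^1_\xi t}\xi(x) + \frac{C^2_\xi}{C^1_\xi}\bigl(1 - e^{-C^1_\xi t}\bigr).
\]
Finally, by the two-sided bound in (i) of Assumption \ref{asmp: mean rever}, $\ell_\xi \EE[|X^{x,\alpha}_t|^{p_\xi}] \le \EE[\xi(X^{x,\alpha}_t)]$ and $\xi(x)\le L_\xi|x|^{p_\xi}$, so dividing by $\ell_\xi$ gives exactly the claimed estimate. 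The only genuinely delicate step is the localisation/uniform integrability needed to discard $M$ in expectation; everything else is bookkeeping.
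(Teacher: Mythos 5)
Your proof is correct and follows essentially the same route as the paper: apply It\^{o}/Dynkin's formula for the pure jump process, invoke (ii) of Assumption \ref{asmp: mean rever} to bound the drift, localise and use the a priori moment estimate from Remark \ref{rem:reec V lambda et T} to discard the martingale in expectation, then conclude by Gronwall/ODE comparison and the two-sided bound in (i) of Assumption \ref{asmp: mean rever}. The paper compresses this by referring to ``the same arguments as in the proof of Lemma \ref{lemma: Lipschitz value discounted jump}''; you have simply spelled out those arguments.
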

 \begin{proof} Fix  $(x,\alpha)\in \R^{d}\x \Ac$ and let us write $X$ for $X^{x,\alpha}$. By \eqref{eq: Borkar contraction jump pour mean rever} and the same arguments as in the proof of Lemma \ref{lemma: Lipschitz value discounted jump}, 
 $$
 \E[\xi(X_{t})]\le  \xi(x)+ \int_{0}^{t}\E[-C^{1}_{\xi}\xi(X_{s}) +C^{2}_{\xi}]ds, \;t\ge 0,
 $$
 which implies that 
\[ \E[\xi(X_{t})]\le  e^{-C^{1}_{\xi} t}\xi(x)+\frac{C^{2}_{\xi}}{C^{1}_{\xi}}(1-e^{-C^{1}_{\xi} t}), \;t\ge 0.\]
We conclude with (i) of Assumption \ref{asmp: mean rever}. 
 \end{proof}
 
 We can now prove a first convergence result. 
\begin{Lemma}\label{lem: jump ergodic rho HJB W}
  Let Assumptions \ref{asmp: basics} and \ref{asmp: controllability for jumps} hold. Then there is $c\in \R$ and a sequence $(\lambda_n)_{n\ge1}$ going to $0$ such that $(\lambda_n V_{\lambda_n})_{n\ge 1}$ converges uniformly on compact sets to $c$, and such that   $(V_{\lambda_n} - V_{\lambda_n}(0))_{n\ge1}$ converges uniformly on compact sets  to a  function $\Wf\in \Cc^{0,1}$ that  solves
  \begin{align*}
  c&=\sup_{a\in\Ab}\left\{  \eta\int_{\R^{d'}}\left[  \Wf(\cdot+b(\cdot,a,e))-  \Wf\right]\nu(de) +r(\cdot,a)\right\},\;\mbox{ on } \R^{d},
  \end{align*}
and satisfies 
  \begin{align}\label{eq: borne lin W} 
  |\Wf(x)|\le L_{V}|x|,\;x\in \R^{d}.
  \end{align}
 \end{Lemma}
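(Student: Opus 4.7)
The plan is to follow the classical vanishing discount procedure, extracting a subsequence from $(V_\lambda)_{\lambda\in(0,1)}$ and passing to the limit in the HJB equation of the discounted problem.

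First, I would set up the compactness. By Lemma \ref{lemma: Lipschitz value discounted jump}, the family $(V_\lambda - V_\lambda(0))_{\lambda\in(0,1)}$ is equi-$L_V$-Lipschitz and vanishes at $0$, so it is uniformly bounded on every compact set. By Arzel\`a--Ascoli, one can extract a subsequence $\lambda_n \downarrow 0$ along which $V_{\lambda_n}-V_{\lambda_n}(0)$ converges uniformly on compact sets to some $\Wf\in\Cc^{0,1}$ satisfying $\Wf(0)=0$ and $[\Wf]^1_{\Cc^{0}}\le L_V$, which immediately yields \eqref{eq: borne lin W}. Second, since $r$ is bounded by $L_{b,r}$ under Assumption \ref{asmp: basics}, $|\lambda V_\lambda|\le L_{b,r}/\eta$ on $\R^d$, so $(\lambda_n V_{\lambda_n}(0))_n$ is a bounded real sequence; up to a further subsequence it converges to some $c\in\R$. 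Writing $\lambda_n V_{\lambda_n}= \lambda_n V_{\lambda_n}(0) + \lambda_n(V_{\lambda_n}-V_{\lambda_n}(0))$ and using the uniform boundedness of the second term on compacts together with $\lambda_n\downarrow 0$, the convergence $\lambda_n V_{\lambda_n}\to c$ is uniform on compact sets.

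Next, I would pass to the limit in the HJB equation for $V_\lambda$. By the dynamic programming principle for the pure jump controlled process (obtained by conditioning on the first jump time, which is exponential with rate $\eta$, and on the mark $e\sim\nu$), $V_\lambda$ satisfies classically
\[
\lambda V_\lambda(x)=\sup_{a\in\Ab}\left\{\eta\int_{\R^{d'}}[V_\lambda(x+b(x,a,e))-V_\lambda(x)]\nu(\de e)+r(x,a)\right\},\quad x\in\R^d.
\]
The crucial observation is that the integrand is unchanged when $V_\lambda$ is replaced by $V_\lambda-V_\lambda(0)$ (the constant cancels). Fix $x\in\R^d$. By Assumption \ref{asmp: basics}, $|b(x,a,e)|\le L_{b,r}(1+|x|)$ for all $(a,e)\in\Ab\x\R^{d'}$, so the set $\{x+b(x,a,e):a\in\Ab,\,e\in\R^{d'}\}$ is contained in the compact ball $\overline{B_{L_{b,r}(1+|x|)}(x)}$. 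On this ball, $V_{\lambda_n}-V_{\lambda_n}(0)\to \Wf$ uniformly, so
\[
\sup_{a\in\Ab}\left|\int_{\R^{d'}}\left\{[V_{\lambda_n}-V_{\lambda_n}(0)](x+b(x,a,e))-[V_{\lambda_n}-V_{\lambda_n}(0)](x) - [\Wf(x+b(x,a,e))-\Wf(x)]\right\}\nu(\de e)\right|\to 0,
\]
where the bound inside the integral is uniform in $e$ (and the integrand is dominated by a constant since $\nu$ is a probability measure). Consequently the $\sup_{a\in\Ab}$ can be interchanged with the limit, and since the LHS converges to $c$, we obtain exactly the ergodic equation stated in the lemma.

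The argument is mostly routine; the one step that requires a little care is justifying the passage to the limit inside $\sup_{a\in\Ab}$, and the key reason it works without further hypotheses is the linear-growth bound on $b$ combined with the uniform convergence of $V_{\lambda_n}-V_{\lambda_n}(0)$ on compacts, which together reduce the question to uniform convergence on a compact target set independent of $(a,e)$. No viscosity machinery is needed since the equation is a pointwise functional equation involving only a nonlocal integral operator acting on continuous functions.
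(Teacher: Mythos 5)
Your proof follows the same vanishing-discount route as the paper: equi-Lipschitz bound from Lemma \ref{lemma: Lipschitz value discounted jump} plus Arzel\`a--Ascoli to extract the limit $\Wf$, boundedness of $\lambda V_\lambda$ to extract $c$, then passage to the limit in the discounted HJB equation after subtracting the constant $V_\lambda(0)$, using the linear growth of $b$ to reduce matters to uniform convergence on a fixed compact ball. One small slip: since $|r|\le L_{b,r}$ and $\E[\int_0^\infty e^{-\lambda s}\de N_s]=\eta/\lambda$, the correct bound is $\lambda|V_\lambda|\le L_{b,r}$, not $L_{b,r}/\eta$; this is harmless since $\eta$ is fixed here, but worth correcting.
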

\begin{proof}
The proof applies classical arguments from \cite{arisawa1998ergodic} to the pure jump setting. By Lemma \ref{lemma: Lipschitz value discounted jump}, $(V_{\lambda} - V_{\lambda}(0))_{\lambda>0}$ is equicontinuous in the Lipschitz sense and, in particular, $\abs{V_{\lambda}(x) - V_{\lambda}(0)}\le L_{{V}}\abs{x}$ for  all $x\in\RR^d$ and $\lambda>0$. Hence, $(\lambda(V_\lambda-V_\lambda(0)))_{\lambda\ge0}$ converges uniformly on compact sets to $0$ as $\lambda\to 0$. Since $(\lambda V_\lambda(0))_{\lambda\ge0}$ is bounded, recall Lemma \ref{lemma: borne EXq} and Assumption \ref{asmp: basics}, there is a sequence $(\lambda_n)_{n\ge1}$ converging to $0$ such that $\lambda_n V_{\lambda_n}(0)\to c\in \R$  as $n\to \infty$. Thus, $\lambda_n V_{\lambda_n}\to c$ uniformly on compact sets. 

By  Lemma \ref{lemma: Lipschitz value discounted jump}, $(V_{\lambda} - V_{\lambda}(0))_{\lambda>0}$ is locally bounded. Then,  a diagonalisation argument allows one to extract a further subsequence (also denoted $(\lambda_n)_{n\ge 0}$) such that $V_{\lambda_n} - V_{\lambda_n}(0)\to \Wf$ on $\mathbb{Q}^{d}$ for some $\Wf:\mathbb{Q}^{d}\to\RR$. By the uniform equicontinuity of $(V_\lambda)_{\lambda\in (0,1)}$, $\Wf$ can be extended to $\R^{d}$ and  $V_{\lambda_n} - V_{\lambda_n}(0)\to \Wf$ uniformly on compact sets. Moreover, $\Wf$ is $L_{V}$-Lipschitz and $\Wf(0)=0$, which implies  \eqref{eq: borne lin W}. 

Next, it follows from standard arguments, see e.g.~\cite{bouchard2011weak}, that $V_{\lambda_{n}}$ solves for each $n\ge 1$
\begin{align}\label{eq: edp Vlambda} 
0= \sup_{a\in\Ab}\left\{\eta\int_{\R^{d'}} [V_{\lambda_n}(\cdot+b(\cdot,a,e)) - V_{\lambda_n}] \nu(\de e) +r(\cdot,a)\right\} -\lambda_{n} V_{\lambda_{n}},\;\mbox{ on } \R^{d}.
\end{align} 
Hence, 
\begin{align*}
\lambda_{n}V_{\lambda_{n}}(0)
=&-\lambda_{n} (V_{\lambda_{n}}-V_{\lambda_{n}}(0)) \\
&+\sup_{a\in\Ab}\left\{\eta\int_{\R^{d'}} [V_{\lambda_n}(\cdot+b(\cdot,a,e))-V_{\lambda_{n}}(0) - (V_{\lambda_n}-V_{\lambda_{n}}(0))]  \nu(\de e) +r(\cdot,a)\right\} ,\;\mbox{ on } \R^{d},
 \end{align*}
 and passing to the limit  (recall  Assumption \ref{asmp: basics} and that $\nu$ is a probability measure) implies that 
 \begin{align*}
c
=& \sup_{a\in\Ab}\left\{\eta\int_{\R^{d'}} [\Wf(\cdot+b(\cdot,a,e)) - \Wf]  \nu(\de e) +r(\cdot,a)\right\},\;\mbox{ on } \R^{d}.
 \end{align*}
 \end{proof}

 We now have to prove that the constant $c$ defined above equals  $\rho^{*}(0)$ and that only $(\Wf,\rho^{*}(0))$ solves \eqref{eq: HJB ergodic jump thm}, up to restricting to functions with linear growth taking the value $0$ at $0$.  
\begin{Lemma}\label{lem: jump ergodic convergence and verification}  
Let Assumptions \ref{asmp: basics},   \ref{asmp: controllability for jumps} and \ref{asmp: mean rever}  hold. Let  $(\tilde \Wf,\tilde \rho)\in \Cc^{0}_{\rm lin}\x \R$ be a solution of the ergodic equation 
  \begin{align*} 
\tilde \rho&=\sup_{a\in\Ab}\left\{\eta\int_{\R^{d'}}[\tilde \Wf(\cdot+b(\cdot,a,e))-\tilde \Wf] \nu(\de e)+r(\cdot,a)\right\}  ,\;\mbox{ on } \R^{d}.
  \end{align*}
 Then, $\rho^{*}$ is constant and equal to $\tilde \rho$. In particular, the constant $c$ of Lemma \ref{lem: jump ergodic rho HJB W} is equal to $\rho^{*}$.
\end{Lemma}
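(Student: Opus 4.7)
The strategy is a standard verification argument: use the ergodic equation to turn $\tilde \Wf$ into a Lyapunov-type function that sandwiches $\rho(x,\alpha)$ between $\tilde \rho$ (from above) for every admissible control and (from below) for a specific Markov selector.

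First I would apply the jump formula to $\tilde \Wf(X^{x,\alpha})$. Since the dynamics \eqref{eq: def X jump} are purely jumping, one has, for every $T>0$ and $(x,\alpha)\in\R^{d}\x\Ac$,
\[
\tilde \Wf(X^{x,\alpha}_{T}) = \tilde \Wf(x) + \int_{0}^{T}\!\!\int_{\R^{d'}}\!\left[\tilde \Wf(X^{x,\alpha}_{s-}+b(X^{x,\alpha}_{s-},\alpha_{s},e)) - \tilde \Wf(X^{x,\alpha}_{s-})\right]N(\de e,\de s).
\]
Writing $N(\de e,\de s)=\tilde N(\de e,\de s)+\eta\nu(\de e)\de s$ where $\tilde N$ is the compensated measure, and controlling the compensated martingale by a localisation argument, the linear growth of $\tilde \Wf$ together with Lemma \ref{lemma: borne EXq} (applied with $p_{\xi}\ge 1$) and Assumption \ref{asmp: basics} yields
\[
\E[\tilde \Wf(X^{x,\alpha}_{T})] - \tilde \Wf(x) = \E\!\left[\int_{0}^{T}\eta\int_{\R^{d'}}\!\left[\tilde \Wf(X^{x,\alpha}_{s}+b(X^{x,\alpha}_{s},\alpha_{s},e)) - \tilde \Wf(X^{x,\alpha}_{s})\right]\nu(\de e)\de s\right].
\]
By the ergodic equation, the inner integrand is bounded above by $\tilde \rho - r(X^{x,\alpha}_{s},\alpha_{s})$ pointwise in $(s,\omega)$, hence
\[
\E[\tilde \Wf(X^{x,\alpha}_{T})] - \tilde \Wf(x) \le T\tilde \rho - \E\!\left[\int_{0}^{T} r(X^{x,\alpha}_{s},\alpha_{s})\de s\right].
\]

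Next I would divide by $T$ and let $T\to\infty$. Using $|\tilde \Wf(y)|\le C(1+|y|)$ (since $\tilde \Wf\in\Cc^{0}_{\rm lin}$) together with Lemma \ref{lemma: borne EXq}, the term $T^{-1}\E[\tilde \Wf(X^{x,\alpha}_{T})]$ is bounded by $CT^{-1}(1+\E[|X^{x,\alpha}_{T}|^{p_{\xi}}]^{1/p_{\xi}})\to 0$, and $T^{-1}\tilde \Wf(x)\to 0$ trivially. Appealing to Remark \ref{rem:reec V lambda et T} to rewrite the ergodic cost as a Lebesgue integral, this gives $\tilde \rho \ge \rho(x,\alpha)$ for every $\alpha\in\Ac$, so $\tilde \rho \ge \rho^{*}(x)$ for every $x\in\R^{d}$.

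For the reverse inequality I would produce a measurable maximiser $\hat {\rm a}:\R^{d}\to \Ab$ in the ergodic equation. Since $\Ab$ is compact and $(b,r)$ as well as $\tilde \Wf$ are continuous, the map $(x,a)\mapsto \eta\int[\tilde \Wf(x+b(x,a,e))-\tilde \Wf(x)]\nu(\de e)+r(x,a)$ is continuous, so a standard measurable selection theorem (e.g.~Berge/Kuratowski--Ryll-Nardzewski) yields such a $\hat {\rm a}$. Feeding the Markov control $\hat\alpha=\hat {\rm a}(X^{x,\hat\alpha}_{\cdot-})$ into the equality above turns the inequality into an equality, hence
\[
\tilde \rho = \frac{1}{T}\E\!\left[\int_{0}^{T} r(X^{x,\hat\alpha}_{s},\hat\alpha_{s})\de s\right] + \frac{1}{T}\bigl(\E[\tilde \Wf(X^{x,\hat\alpha}_{T})]-\tilde \Wf(x)\bigr),
\]
and taking $\liminf_{T\to\infty}$ gives $\tilde \rho = \rho(x,\hat\alpha)\le \rho^{*}(x)$. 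Combined with the previous step this proves $\rho^{*}(x)=\tilde \rho$ for all $x\in\R^{d}$. Applying this to the couple $(\Wf,c)$ of Lemma \ref{lem: jump ergodic rho HJB W} (which lies in $\Cc^{0}_{\rm lin}\x\R$ by \eqref{eq: borne lin W}) yields $c=\rho^{*}$.

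\textbf{Main obstacle.} The delicate point is the justification of the Dynkin identity above as a true (not just local) martingale equality: because $\tilde \Wf$ has only linear growth and $b$ grows linearly, the jumps of the stochastic integral are not bounded, so one must combine a localisation by exit times of balls with the moment estimate of Lemma \ref{lemma: borne EXq} and Assumption \ref{asmp: basics} to pass to the limit. A secondary (but routine) point is checking that the measurable selector $\hat {\rm a}$ indeed produces a well-defined $\F$-predictable process $\hat\alpha$; this follows by the standard iterative construction between jump times of $N$.
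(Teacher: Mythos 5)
Your proposal is correct and follows essentially the same route as the paper's proof: apply the Dynkin/It\^o formula to $\tilde \Wf(X^{x,\alpha})$, use the ergodic equation to get the supremum bound $\tilde\rho\ge\rho(x,\alpha)$ for every $\alpha$, then invoke a measurable selection to build a Markov control achieving equality, and control the boundary term via the linear growth of $\tilde\Wf$ together with Lemma \ref{lemma: borne EXq}. The paper does the two halves in the opposite order (selector first, generic $\alpha$ second) and cites Bertsekas--Shreve for the selection rather than Kuratowski--Ryll-Nardzewski, but these are cosmetic differences.
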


\begin{proof} Let us fix $x\in \R^{d}$. 

{\rm a.} By Lemma  \ref{lem: jump ergodic rho HJB W} {and \cite[Proposition 7.33, p.153]{BertsekasShreve.78}}, we can find a measurable map $x'\in \R^{d}\to \hat {\rm a}(x') \in \Ab$   such that 
\begin{align*} 
\tilde \rho =  \eta\int_{\R^{d'}} [\tilde \Wf(\cdot+b(\cdot,\hat {\rm a}(\cdot),e)) - \tilde \Wf] \nu(\de e) + r(\cdot,\hat {\rm a}(\cdot))  ,\;\mbox{ on } \R^{d}.
\end{align*}
 Let $\hat X$ denote the solution of \eqref{eq: def X jump} associated to $\hat \alpha:= \hat {\rm a}({\hat X_{\cdot -}})$ and the initial condition $x$. Then, It\^{o}'s Lemma implies that 
$$
\E\left[\tilde \Wf(\hat X_{t})-\tilde \Wf(x)+ {\frac1\eta}\int_{0}^{t} r(\hat X_{s-},\hat \alpha_{s}) \de N_s\right]={\tilde \rho t }, \;t\ge 0.
$$
Moreover, since   $\tilde \Wf$ has    linear growth, there exists $C>0$ such that 
\begin{align*}
\E[|\tilde \Wf(\hat X_{t})-\tilde \Wf(x)|]\le C\E[|\hat X_{t}|+|x|].
\end{align*}
By Lemma \ref{lemma: borne EXq}, $\E[|\hat X_{t}|]/t\to 0$ as $t\to \infty$ since $p_\xi\ge 1$. Then, the above implies that 
$$
\lim_{t\to \infty} \frac{1}{\eta t} \E\left[ \int_{0}^{t} r(\hat X_{s-},\hat \alpha_{s})  \de N_s\right]=\tilde \rho.
$$
{\rm b.} Conversely,    for any $\alpha\in \Ac$,
$$
\E\left[\tilde \Wf(  X^{x,\alpha}_{t})-\tilde \Wf(x)+  {\frac1\eta}\int_{0}^{t} r(  X^{x,\alpha}_{s-},  \alpha_{s})  \de N_s\right]\le {\tilde \rho t }, \;t\ge 0.
$$
By Lemma  \ref{lemma: borne EXq} and the linear growth of $\tilde \Wf$ again, we deduce that 
$$
\limsup_{t\to \infty }\frac{1}{\eta t} \E\left[\int_{0}^{t} r(  X^{x,\alpha}_{s-},  \alpha_{s})  \de N_s\right]\le \tilde \rho.
$$

{\rm c.} Combining a.~and b.~implies that $\tilde \rho=\rho^{*}(x)$. By arbitrariness of $x\in \R^{d}$, $\rho^{*}$ is constant.
 \end{proof}

  We are now in position to prove our second convergence result, and therefore to complete the proof of Theorem \ref{thm: jump ergodic rho HJB + verif}.
\begin{Lemma}\label{lemma: equivalence of ergodic limits} Let Assumptions \ref{asmp: basics},   \ref{asmp: controllability for jumps} and \ref{asmp: mean rever}  hold.  Then, there exists a sequence   $(T_{n})_{n\ge 1}$ going to $+\infty$ such that  $(T_{n}^{-1}V_{T_n}(0,\cdot))_{n\ge 1}$ converges  uniformly on compact sets to   $\rho^{*}(0)$.
  \end{Lemma}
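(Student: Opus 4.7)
My plan is to show directly that $T^{-1} V_T(0,\cdot) \to \rho^*$ uniformly on compact sets (so any sequence $T_n\to\infty$ works), by comparing the finite horizon value to the infinite horizon ergodic value via the ergodic PDE and the optimal stationary control.

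First, fix $x \in \R^d$ and $T>0$. For an arbitrary $\alpha \in \Ac$, I would apply It\^{o}'s lemma to $\Wf(X^{x,\alpha})$, where $\Wf$ is as in Lemma~\ref{lem: jump ergodic rho HJB W}; the jumps of $X^{x,\alpha}$ are driven by $N$ with compensator $\eta \nu(\de e)\de t$, so, after a standard localisation argument using the linear growth of $\Wf$ and Lemma~\ref{lemma: borne EXq}, taking expectations gives
\[
\E[\Wf(X_T^{x,\alpha})]-\Wf(x)=\E\left[\int_0^T \eta\int_{\R^{d'}}\left[\Wf(X_s^{x,\alpha}+b(X_s^{x,\alpha},\alpha_s,e))-\Wf(X_s^{x,\alpha})\right]\nu(\de e)\de s\right].
\]
By the HJB equation~\eqref{eq: HJB ergodic jump thm} satisfied by $(\Wf,\rho^*)$ (Lemmas~\ref{lem: jump ergodic rho HJB W}–\ref{lem: jump ergodic convergence and verification}), the integrand is bounded above by $\rho^* - r(X_s^{x,\alpha},\alpha_s)$. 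Combined with Remark~\ref{rem:reec V lambda et T}, this yields the fundamental inequality
\[
J_T(0,x,\alpha) \le \rho^* T + \Wf(x) - \E[\Wf(X_T^{x,\alpha})].
\]
Applying the same computation with the Markovian optimal control $\hat{\rm a}$ from Remark~\ref{rem : existence limit dans rho et controle optimal} (which achieves equality in the HJB supremum), one obtains the matching lower bound
\[
J_T(0,x,\hat\alpha)=\rho^* T + \Wf(x) - \E[\Wf(\hat X_T^x)].
\]

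Next, I would control the remainder terms uniformly in $T$ and $\alpha$. By~\eqref{eq: borne lin W}, $|\Wf(y)|\le L_V|y|$, so Jensen's inequality and Lemma~\ref{lemma: borne EXq} (with $p_\xi\ge 1$) give a constant $C_x>0$, depending only on $x$, with
\[
|\E[\Wf(X_T^{x,\alpha})]|\le L_V \E[|X_T^{x,\alpha}|^{p_\xi}]^{\frac{1}{p_\xi}} \le C_x, \quad \mbox{for all $T\ge 0$ and $\alpha\in\Ac$,}
\]
and the same bound for $\hat X^x$. Combining the two estimates above and taking the supremum over $\alpha$,
\[
\rho^* T + \Wf(x) - C_x \le V_T(0,x) \le \rho^* T + \Wf(x) + C_x.
\]

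Finally, dividing by $T$ and letting $T\to\infty$, we obtain $T^{-1}V_T(0,x)\to \rho^*$, and since $x\mapsto \Wf(x)$ and $x\mapsto C_x$ are locally bounded (indeed $\Wf$ is Lipschitz and $C_x$ can be chosen of the form $L_V C^{1/p_\xi}(1+|x|)$), the convergence is uniform on every compact subset of $\R^d$. Picking any sequence $T_n\to+\infty$ (e.g.~$T_n=n$) then yields the claim, with the additional observation that $\rho^*(0)=\rho^*$ by Lemma~\ref{lem: jump ergodic convergence and verification}. The only delicate step is the It\^{o} expansion on the jump process together with the localisation argument, but this is standard given the moment bounds of Lemma~\ref{lemma: borne EXq} and the linear growth of $\Wf$.
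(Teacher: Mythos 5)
Your proof is correct, and it takes a genuinely different route from the paper's. The paper follows the Arisawa--Lions strategy: it sets $T_n=\delta/\lambda_n$ for the subsequence $(\lambda_n)$ already extracted in Lemma~\ref{lem: jump ergodic rho HJB W}, applies the dynamic programming principle to the discounted value $V_{\lambda_n}$ to relate $T_n^{-1}V_{T_n}(0,\cdot)$ to $\lambda_n V_{\lambda_n}$, obtains a bound involving the factor $(1-e^{-\delta})/\delta$, and then lets $\delta\to 0$. Your argument instead bypasses $V_\lambda$ entirely: you reuse the It\^{o}/verification computation already carried out in the proof of Lemma~\ref{lem: jump ergodic convergence and verification}, but extract from it a \emph{quantitative} two-sided bound $|V_T(0,x)-\rho^*T-\Wf(x)|\le C_x$ uniformly in $T$ (using the linear growth~\eqref{eq: borne lin W} of $\Wf$ and the uniform moment estimate of Lemma~\ref{lemma: borne EXq}), from which $T^{-1}V_T(0,x)\to\rho^*$ follows immediately. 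Your version is more self-contained, avoids the $(1-e^{-\delta})/\delta$ detour and the diagonal-in-$\delta$ step, and in fact yields a strictly stronger conclusion: convergence holds along \emph{every} sequence $T_n\to\infty$ (with an explicit $O(1/T)$ rate), not merely along some subsequence constructed from $(\lambda_n)$. The cost is negligible since the optimal Markovian control $\hat{\rm a}$ from Remark~\ref{rem : existence limit dans rho et controle optimal} and the HJB equation~\eqref{eq: HJB ergodic jump thm} are already available at this point in the appendix.
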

  
  \begin{proof}  The proof follows from the same arguments as in \citethm[Prop. VI.1]{arisawa1998ergodic} except that in their case the convergence holds uniformly on $\R^{d}$.  Let $(\lambda_{n})_{n\ge 1}$ be as in Lemma \ref{lem: jump ergodic rho HJB W} and set $T_{n}:=\delta/\lambda_{n}$ for some $\delta\in (0,1)$, {so that $\lambda_{n}\to 0$ and $T_{n}\to \infty$ as $n\to \infty$}. Fix $x\in \R^{d}$. By Lemma \ref{lemma: Lipschitz value discounted jump} and Lemma \ref{lemma: borne EXq}, we can find $C>0$ such that 
  $\E[| V_{\lambda_{n}}(X^{x,\alpha}_{t})- V_{\lambda_{n}}(x)|]\le  C(1+|x|)$ uniformly in $\alpha \in \Ac$ and for all $x\in \R^{d}$, and $t\ge 0$. Arguing as in the proof of   \citethm[Prop. VI.1]{arisawa1998ergodic}, we then deduce from the dynamic programming principle applied to $V_{\lambda_{n}}$, see e.g.~\cite{bouchard2011weak},  {Lemma \ref{lemma: Lipschitz value discounted jump}}, Lemma \ref{lemma: borne EXq} and Assumption \ref{asmp: basics} that, for some $C'>0$ {that does not depend on $n$},  
  $$
  |\rho^{*}(1-e^{-\delta})-\frac{\delta}{T_{n}}V_{T_{n}}(0,x)|\le 2|\lambda_{n} V_{\lambda_{n}}(x)-\rho^{*}| {+\lambda_{n}C'(1+|x|)}.
  $$
  It remains to divide the above by $\delta$, send $n\to \infty$ and use  Lemmas \ref{lem: jump ergodic rho HJB W} and \ref{lem: jump ergodic convergence and verification}  to obtain that 
  $$
  \rho^{*} \frac{(1-e^{-\delta})}{\delta} \le  \liminf_{n\to \infty} \frac{1}{T_{n}}V_{T_{n}}(0,x) \le \limsup_{n\to \infty} \frac{1}{T_{n}}V_{T_{n}}(0,x)\le \rho^{*} \frac{(1-e^{-\delta})}{\delta},
  $$
  {and we conclude by arbitrariness of $\delta\in (0,1)$}. The fact that the convergence is uniform on compact sets follows from the above and Lemma \ref{lem: jump ergodic rho HJB W}.
  \end{proof}

\section{Estimates for elliptic Hamilton-Jacobi-Bellman equations without control on the volatility part}\label{app:regularity}
\def\bmf{{\mathfrak b}}
\def\amf{{\mathfrak a}}
\def\fmf{{\mathfrak f}}\def\Smf{{\mathfrak S}}

  In this section, we  collect standard estimates on  elliptic Hamilton-Jacobi-Bellman equations associated to infinite horizon optimal control problems of a diffusion, in which there is no control on the volatility part. This is a specific class of quasi-linear equations whose analysis is standard. 
 Our focus here is on the growth rate of  local $\Cc^{2,1}_{b}$-estimates  in the case where the solution is already known to be Lipschitz. We follow closely the arguments of \cite{gilbarg2015elliptic} that considers compact domains and insist only on the points where the Lipschitz continuity property is used.
  
 As usual, we first consider linear equations of the form
 \begin{align}\label{eq: PDE appendix} 
 0= \scap \bmf {\D u^\top}  + \frac12\Trb{\amf \D^{2}u}-\lambda u - \fmf \mbox{ on } \R^{d}.
 \end{align}
We fix $M>0$ and a modulus of continuity $\varrho$ (i.e.~a   real valued map on $\R^{d}$ that is continuous at $0$ and such that $\varrho(0)=0$). We let $\Smf(M,\varrho)$ denote the collections of real-valued maps $u\in \Cc^{2}$ such that  {$u(0)=0$,} $|\D u|\le M$ and  that are strong solutions of \eqref{eq: PDE appendix} with coefficients satisfying:
\begin{enumerate} 
\item[(i)] $\lambda\in [0,1]$, 
\item[(ii)] $(\bmf,\fmf) : \R^{d}\to \R^{d} \x \R$ is measurable and $[\bmf]_{\Cc^0_{\rm lin}}+\norm{\fmf}_{\Cc^0_b}\le M$,
\item[(iii)]$\amf:\RR^d\to\mathbb{S}^d$ is   bounded by $M$  and   admits $\varrho$ as a modulus of continuity, 
\item[(iv)]$\inf\{\xi^{\top} \amf\;\xi :\xi \in \R^{d},|\xi|=1\}\ge 1/M$. 
\end{enumerate} 
Hereafter, we use the convention $0/0=0$.
 
 \begin{Lemma}\label{lem: estimate linear PDE} For each ${\gamma}\in (0,1)$, there exists $K_{M,\varrho}^{\gamma}>0$ such that any $u\in \Smf(M,\varrho)$  satisfies 
 $$
 \|u\|_{\Cc^{1,\gamma}_{b}(B_{2}(x))} \le K_{M,\varrho}^{\gamma}(1+\abs{x}), \;\mbox{ for all $x\in \R^{d}$.}
 $$
\end{Lemma}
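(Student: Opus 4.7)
The plan is to rewrite \eqref{eq: PDE appendix} as a pure second-order equation in non-divergence form by absorbing the first-order term and the zeroth-order $-\lambda u$ piece into the right-hand side, then apply the classical interior $W^{2,p}$ estimate for strong solutions of such equations, and finally pass to $\Cc^{1,\gamma}$ via Sobolev embedding.

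Fix $x\in\R^d$. Since $u\in \Smf(M,\varrho)$ we have $u(0)=0$ and $|\D u|\le M$, so $u$ is globally $M$-Lipschitz and in particular $|u(z)|\le M|z|$ for every $z\in\R^d$. I would first rewrite \eqref{eq: PDE appendix} as
$$
\tfrac12\Trb{\amf \D^2 u}=g,\qquad g:=\lambda u+\fmf-\scap{\bmf}{\D u^\top}.
$$
Combining $\lambda\le 1$, $|\fmf|\le M$, $|\bmf(z)|\le M(1+|z|)$, $|\D u|\le M$ and $|u(z)|\le M|z|$, on the ball $B_3(x)$ one has
$$
\|u\|_{L^\infty(B_3(x))}+\|g\|_{L^\infty(B_3(x))}\le C_1(M)(1+|x|).
$$

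Next, I would invoke the interior $L^p$ estimate for strong solutions of non-divergence form elliptic equations with continuous leading coefficient and no drift, e.g.\ \cite[Theorem 9.11]{gilbarg2015elliptic} applied with zero first- and zeroth-order terms. Hypotheses (iii)--(iv) ensure that $\amf$ is uniformly elliptic with constant $1/M$ and admits $\varrho$ as modulus of continuity; since $u\in\Cc^2$ it belongs to $W^{2,p}_{\mathrm{loc}}$ for every finite $p$, so for each $p\in[1,\infty)$ the theorem gives
$$
\|u\|_{W^{2,p}(B_2(x))}\le C_2\bigl(\|u\|_{L^p(B_3(x))}+\|g\|_{L^p(B_3(x))}\bigr)\le C_3(1+|x|),
$$
with $C_2,C_3$ depending only on $p,d,M,\varrho$. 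To finish, I would fix $p>d/(1-\gamma)$ and apply the Sobolev embedding $W^{2,p}(B_2(x))\hookrightarrow \Cc^{1,\gamma}(B_2(x))$ (whose constant depends only on $p,d,\gamma$); together with the trivial bounds $|u|\le M(|x|+2)$ and $|\D u|\le M$ on $B_2(x)$, this delivers the claim.

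The only non-routine point will be to justify that the constant in the interior $W^{2,p}$ estimate is indeed independent of $x$. If one applied \cite[Theorem 9.11]{gilbarg2015elliptic} to \eqref{eq: PDE appendix} as written, the bound $\|\bmf\|_{L^\infty(B_3(x))}\le M(1+|x|+3)$ would enter the constant and destroy the announced $(1+|x|)$ growth. Absorbing $\bmf$ (and $\lambda u$) into $g$ replaces that source of blow-up by an extra factor of $1+|x|$ on the right-hand side via $\|g\|_{L^\infty(B_3(x))}$; the differential operator left on the left-hand side is then $\tfrac12\Trb{\amf\D^2\cdot}$, whose constants in the Calder\'on--Zygmund estimate depend only on ellipticity and on $\varrho$. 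Translation-invariance of the estimate on the fixed-radius pair $(B_2(x),B_3(x))$ then gives the required independence in $x$.
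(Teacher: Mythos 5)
Your proof is correct and takes essentially the same route as the paper: absorb the first- and zeroth-order terms into a right-hand side $g$ of linear growth, obtain a translation-invariant interior $W^{2,p}$ estimate for the pure second-order operator $\frac12\Tr[\amf\,\D^2\cdot]$, and conclude by Sobolev embedding. The only (cosmetic) difference is that the paper follows the proof of \cite[Theorem 9.11]{gilbarg2015elliptic} via the freezing-coefficients inequality \cite[(9.37)]{gilbarg2015elliptic} and an explicit covering of $B_2(x)$, whereas you invoke Theorem 9.11 as a black box with zero drift and zero-order coefficient; both isolate exactly the same issue, namely that the unbounded $\bmf$ must not enter the Calder\'on--Zygmund constant, and both handle it in the same way.
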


\begin{proof}  
  {\rm  1.}    Given $p>1$, we first estimate $\|u\|_{W^{2,p}(B_{2}(x))}$ in which $\|\cdot\|_{W^{2,p}(B_{2}(x))}$ denotes the norm associated to the Sobolev space $W^{2,p}(B_{2}(x))$. We follow the proof of \cite[Theorem 9.11]{gilbarg2015elliptic}. Fix $x_0\in B_2(x)$. {By \cite[(9.37)]{gilbarg2015elliptic}}, for any $v\in W^{2,p}(B_{3}({x_{0}}))$ supported in some $B_R(x_0)\subset B_3(x)$, $R>0$, there is ${C_{1}}>0$,  {that  depends only $p$,} such that 
    \[\norm{\D^2 v}_{L^{p}(B_R(x_0))} \le C_{1}M \left(\sup_{{B_R(x_0)}}\abs{\amf-\amf(x_0)}\norm{\D^2v}_{L^{p}(B_R(x_0))}  +  \norm{\Tr[\amf \D^2v]}_{L^{p}(B_R(x_0))}\right)\,,\]
 {in which $\norm{\cdot}_{L^{p}(B_R(x_0))}$ denotes the usual norm of  the  $L^{p}$-space associated to the Lebesgues measure on  $B_R(x_0)$.}    
    
  {The uniform continuity} of $\amf$ implies that there exists $R>0$  {small enough, that only depends on $p$, $M$ and $\varrho$, such that 
  $\abs{\amf-\amf(x_0)}\le  (2C_{1}M)^{-1}$ on $B_{R}(x_0)$, so that the above implies that 
    \begin{align}
    \norm{\D^2 v}_{L^{p}(B_{R}(x_0))} \le  2 C_{1}M    \norm{\Tr[\amf \D^2v]}_{L^{p}(B_{R}(x_0))}.\label{eq: app fixed point D2v bound}
    \end{align}}
  Take $u\in\Smf(M,\varrho)$ a solution to \eqref{eq: PDE appendix} in $B_3(x)$, applying \eqref{eq: app fixed point D2v bound} yields
    \begin{align*}
    \norm{\D^2u}_{L^{p}({{B_{R}}}(x_0))}\le {C_{2}} (\norm{\fmf}_{\Cc^0_b(B_3(x))} + \lambda\norm{u}_{\Cc^0_b(B_3(x))} +\norm{\bmf}_{\Cc^0_b(B_3(x))} \norm{\D u^{\top}}_{\Cc^0_b(B_3(x))})
    \end{align*}
  for some ${C_{2}}>0$ {that only depends on $M$, $p$ and $\varrho$}.
  From the definition of $\Smf(M,\varrho)$, it follows that there is ${C_{3}>0}$, independent of $x_{0}$,  such that
  \[ \norm{u}_{W^{2,p}(B_{{R}}(x_0))}\le  {C_{3}}(1+\abs{x})\,, \]
  and, by covering $B_2(x)$ with finitely many balls of radius {less that $R$}, one obtains 
  \[\norm{u}_{W^{2,p}(B_{2}(x))}\le  {C_{4}}(1+\abs{x}) \]
  for some ${C_{4}}$ that depends only on $p$, $M$ and $\varrho$. 

{\rm 2.}
  Using an imbedding theorem, see \eg~\cite[Theorem 7.26]{gilbarg2015elliptic}, we can find $\bar K^{\gamma,p}>0$ such that
  $$
  \|u\|_{\Cc^{1,\gamma}_{b}(B_{2}(x))} \le \bar K^{\gamma,p} \|u\|_{W^{2,p}(B_{2}(x))}, \;\forall\;u\in \Smf(M,\varrho),\;x\in \R^{d},
  $$
  for all  $p\in \N$ such that $0<d/p<1$ and $\gamma\in (0,1-d/p)$. Given $\gamma\in (0,1)$, the required result follows by combining the above for some $p$ large enough.  \end{proof}

%\begin{Remark}
%These results extend to $u\in\Cc^{0,\beta}_b\cap\Cc^{0}_{\rm lin}$ with $\beta<1$; one can follow the proof of \cite[Theorem 9.11]{gilbarg2015elliptic} using interpolation inequalities for a family of weighted seminorms to arrive at an analogous conclusion via a slightly longer route.
%\end{Remark}

We now turn to the quasilinear case 
 \begin{align}\label{eq: PDE appendix quasi} 
 0=   \hat \bmf(\cdot,\D u^{\top}) + \frac12\Trb{\amf \D^{2}u}-\lambda u   \mbox{ on } \R^{d},
 \end{align}
 in which 
 $$
    \hat \bmf(x,y) := \scap{ \bmf(x,y)}{y}- \fmf(x,y),\;(x,y)\in \R^{d}\times \R^{d}. 
 $$
 We again fix $M>0$, and $\rho=(\rho_{1},\rho_{2})\in (0,1]^{2}$, and let $\tilde \Smf(M,\rho)$ denote the collection of real-valued maps  {$u\in \Cc^{2}$ such that  {$u(0)=0$,} $|\D u|\le M$}, and that are   solutions of \eqref{eq: PDE appendix quasi} for some coefficients satisfying:
 \begin{enumerate} 
\item[(a.)] $\lambda\in [0,1]$, 
\item[(b.)] $(\bmf,\fmf) : \R^{d}\to \R^{d} \x \R$ is measurable and $[\bmf]_{\Cc^0_{\rm lin}(\R^{2d})}+\norm{\fmf}_{\Cc^0_b(\R^{2d})}\le M$,
\item[(c.)] $\amf:\RR^d\to\mathbb{S}^d$ is measurable and bounded by $M$. 
 \item[(d.)] $\inf\{\xi^{\top} \amf\;\xi :\xi \in \R^{d},|\xi|=1\}\ge 1/M$,
 \item[(e.)]  for all $x,x'\in \R^{d}$ such that $|x-x'|\le 1$ and all $y,y'\in \R^{d}$:
 $$|\amf(x)-\amf(x')|+|\hat \bmf(x,y)-\hat \bmf(x',y')| \le M\left(|x-x'|^{\rho_{1}}+|y-y'|^{\rho_{2}}\right).$$
  
 \end{enumerate}
 
\begin{Lemma}\label{lem: estimate linear PDE 2} Fix    $\gamma\in (0,\rho_{1}\wedge \rho_{2}{)}$. Then, there exists   $\tilde K_{M,\rho}^{\gamma}>0$ such that  any $u\in \tilde \Smf(M,\rho)$  satisfies  
 $$
 \|u\|_{\Cc^{2,\gamma}_{b}(B_{1}(x))}  \le \tilde K_{M,\rho}^{\gamma}(1+\abs{x}), \;\mbox{ for all $x\in \R^{d}$.} 
 $$ 
\end{Lemma}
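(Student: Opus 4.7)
The plan is to linearize \eqref{eq: PDE appendix quasi} by freezing the nonlinear coefficients along $\D u$, use Lemma \ref{lem: estimate linear PDE} to obtain a $\Cc^{1,\gamma'}$ estimate on $u$ for a suitable $\gamma'\in(0,1)$, and then bootstrap to $\Cc^{2,\gamma}$ via a classical interior Schauder estimate.

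\textbf{Step 1 (Freezing and $\Cc^{1,\gamma'}$ bound).} For $u\in\tilde{\mathfrak{S}}(M,\rho)$, set $\tilde{\mathfrak{b}}(x):=\mathfrak{b}(x,\D u(x)^\top)$ and $\tilde{\mathfrak{f}}(x):=\mathfrak{f}(x,\D u(x)^\top)$. Since $|\D u|\le M$, condition (b.) yields $[\tilde{\mathfrak{b}}]_{\Cc^0_{\mathrm{lin}}}+\|\tilde{\mathfrak{f}}\|_{\Cc^0_b}\le M'$ for some $M'$ depending only on $M$; condition (e.) provides a modulus of continuity $\varrho$ for $\mathfrak{a}$ depending only on $M$ and $\rho_1$; and (c.)--(d.) give boundedness and uniform ellipticity. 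Rewriting
\begin{align*}
0=\langle\tilde{\mathfrak{b}},\D u^\top\rangle+\tfrac12\Trb{\mathfrak{a}\D^2u}-\lambda u-\tilde{\mathfrak{f}}
\end{align*}
shows $u\in\mathfrak{S}(M',\varrho)$. Since $\gamma<\rho_2$, we can fix $\gamma'\in(\gamma/\rho_2,1)$; Lemma \ref{lem: estimate linear PDE} then gives $\|u\|_{\Cc^{1,\gamma'}_b(B_2(x))}\le K_1(1+|x|)$ for $K_1$ independent of $u$ and $x$.

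\textbf{Step 2 (H\"older regularity of the frozen RHS).} By (e.) and Step 1, for $y,y'\in B_2(x)$ with $|y-y'|\le 1$,
\begin{align*}
\abs{\hat{\mathfrak{b}}(y,\D u(y)^\top)-\hat{\mathfrak{b}}(y',\D u(y')^\top)}\le M|y-y'|^{\rho_1}+M\bigl(K_1(1+|x|)\bigr)^{\rho_2}|y-y'|^{\gamma'\rho_2}.
\end{align*}
Since $\gamma\le\rho_1\wedge(\gamma'\rho_2)$ and $\rho_2\le 1$, the right-hand side is bounded by $K_2(1+|x|)|y-y'|^\gamma$. Combined with $|u|\le M(|x|+2)$ on $B_2(x)$ (from $u(0)=0$ and $|\D u|\le M$), this yields $\|g\|_{\Cc^{0,\gamma}_b(B_2(x))}\le K_3(1+|x|)$ for $g(y):=\lambda u(y)-\hat{\mathfrak{b}}(y,\D u(y)^\top)$. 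Moreover, (e.) ensures that $\mathfrak{a}$ is $\gamma$-H\"older on $B_2(x)$ with constant $M$, and (d.) gives uniform ellipticity.

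\textbf{Step 3 (Schauder and conclusion).} Rewriting \eqref{eq: PDE appendix quasi} as $\tfrac12\Trb{\mathfrak{a}(\cdot)\D^2u}=g$ on $B_2(x)$, classical interior Schauder estimates (see, e.g., \cite[Chapter 6]{gilbarg2015elliptic}) give
\begin{align*}
\|u\|_{\Cc^{2,\gamma}_b(B_1(x))}\le C\bigl(\|u\|_{\Cc^0(B_2(x))}+\|g\|_{\Cc^{0,\gamma}(B_2(x))}\bigr)\le \tilde K^\gamma_{M,\rho}(1+|x|),
\end{align*}
which is the claim. The main technical point is the tight interplay of H\"older exponents in Step 2: the composition $\mathfrak{b}(\cdot,\D u(\cdot))$ is only $(\gamma'\rho_2)$-H\"older in $x$, forcing $\gamma'>\gamma/\rho_2$; the feasibility of this choice rests precisely on the hypothesis $\gamma<\rho_2$, while the bound $\gamma<\rho_1$ is what guarantees that $\mathfrak{a}$ and the explicit $x$-dependence of $\hat{\mathfrak{b}}$ are $\gamma$-H\"older.
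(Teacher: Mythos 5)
Your overall strategy matches the paper's: linearise by freezing the coefficients along $\D u$, invoke Lemma \ref{lem: estimate linear PDE} for a $\Cc^{1,\gamma'}$ bound with $\gamma'$ tuned so that $\gamma'\rho_2>\gamma$, and then push to $\Cc^{2,\gamma}$ via Schauder on the frozen linear equation. The main divergence is in the last step. The paper re-derives the interior Schauder estimate from scratch, following the interpolation/absorption argument of \cite[Theorem 6.2]{gilbarg2015elliptic} with the weighted semi-norms $[\cdot]^{*}_{2,\gamma}$ and the small parameter $\mu$, precisely in order to make the $(1+|x|)$ growth of the constant transparent. You instead apply the classical interior Schauder estimate as a black box and argue separately that its constant is uniform in $x$ (same ball geometry, ellipticity $\ge 1/M$, and a uniform $\gamma$-H\"older bound on $\amf$ over $B_2(x)$, which your condition-(e.) remark covers). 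Both are valid; yours is more economical, while the paper's self-contained derivation makes the dependence explicit without appealing to uniformity of the classical constant. You also make explicit the choice $\gamma'\in(\gamma/\rho_2,1)$, which in the paper is only implicit in the phrase ``for each $\gamma\in(0,1)$.''

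One small gap worth flagging: the classical interior Schauder estimate (e.g.~\cite[Corollary 6.3]{gilbarg2015elliptic}) is an \emph{a priori} estimate and presupposes $u\in\Cc^{2,\gamma}(B_2(x))$, whereas the membership in $\tilde{\mathfrak S}(M,\rho)$ only gives $u\in\Cc^2$. You therefore need a separate regularity upgrade before invoking the estimate — e.g.~\cite[Theorem 6.17]{gilbarg2015elliptic}, or \cite[Theorem 9.19]{gilbarg2015elliptic} as the paper does after Step~1 — so that the Schauder estimate is legitimately applicable. This is a one-line fix but should be stated.
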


\begin{proof} Fix $x\in \R^{d}$. Since $|\D u|\le M$, by Lemma \ref{lem: estimate linear PDE} applied to the coefficient $x'\in \R^{d} \mapsto (\bmf(x',\D u(x')),\amf(x'),$ $\fmf(x',\D u(x')))$ in place of $(\bmf,\amf,\fmf)$, for each $\gamma\in (0,1)$, we can find $C_{\gamma}>0$ such that 
\begin{align}\label{eq: birne C1r u} 
\|u\|_{\Cc^{1,\gamma}_{b}(B_{2}(x))}\le C_{\gamma}(1+|x|)\; \mbox{ for all $x\in \R^{d}$.}
\end{align}
It then follows from \cite[Theorem 9.19]{gilbarg2015elliptic} that  $u\in \Cc_{b}^{2,\gamma}(B_{2}(x))$ for any $\gamma \in (0,\rho_{1}\wedge \rho_{2}{)}$. 
\\
To obtain an associated estimate, we turn to the proof of \cite[Theorem 6.2]{gilbarg2015elliptic} which we apply to the solution $w= u$ of the linear equation $ Lw:= \frac12\Trb{\amf \D^{2}w}=-\hat \bmf(\cdot,\D w^{\top}) + \lambda w $, in our particular setting. Fix $x_0\in B_2(x)$, and consider the constant coefficient equation $L_0w:= \frac12\Trb{\amf(x_0) \D^{2}w}= F$ where $F(z):=\frac12\Trb{(\amf(x_0)-\amf(z) )\D^{2}{u}(z)}-\hat\bmf(z,\D {u}^{\top}(z))+\lambda {u}({z})$, $z\in \R^{d}$.

We first introduce some notations. For $\Omega\subset\RR^d$, $\gamma\in {(0,1)}$, and $f\in\Cc^{2,\gamma}(\Omega)$ define the following norm and Schauder semi-norm respectively as follows:
\begin{align}
\abs{f}_{0,\gamma,\Omega}^{(2)}&:= \sup_{z\in\Omega}d_z^2\abs{f(z)}+ \sup_{(z,z')\in\Omega^2}d_{z,z'}^{2+\gamma}\frac{\abs{f(z)-f(z')}}{\abs{z-z'}^\gamma} \notag \\
[f]^*_{2,\gamma,\Omega}&:=\sup_{\substack{(z,z')\in\Omega^2}} d_{z,z'}^{2+\gamma} \frac{\abs{\D^{{2}} f(z)-\D^{{2}} f(z')}}{\abs{z-z'}^\gamma}\\
[f]^*_{2,\Omega}&:=\sup_{z\in\Omega}d_z^{2}\abs{\D^{2} f(z)}\,, \label{eq: def Schauder semi-norm}
\end{align}
where $d_z$ is the distance of $z$ to the boundary of $\Omega$ and $d_{z,z'}:=d_z{\wedge} d_{z'}$  for any $(z,z')\in\Omega^2$.  

We now fix  $\gamma\in {(0,\rho_{1}\wedge \rho_{2})}$. Let $\mu\in(0,\frac12]$ and  {set} $ {\Omega:=}B_2(x)$. Fix $y_0\in B_2(x)$ such that $d_{x_{0}}\le d_{y_{0}}$ (without loss of generality) and set $B:=B_{\mu d_{x_{0}}}(x_{0})$. Then, \cite[Lemma 6.1 (a.)]{gilbarg2015elliptic} ({see \cite[(6.16)]{gilbarg2015elliptic} for details}) applied to $L_0w=F$ implies that
\begin{align*}
  d_{x_0,y_{0}}^{2+\gamma}\frac{\abs{\D^2 {u}(x_0)-\D^2 {u}(y_0)}}{\abs{x_0-y_0}^\gamma}&= d_{x_0}^{2+\gamma}\frac{\abs{\D^2 {u}(x_0)-\D^2 {u}(y_0)}}{\abs{x_0-y_0}^\gamma}\\
  &\le \frac{{C^{\gamma}_{1}}}{\mu^{2+\gamma}}(\norm{u}_{\Cc^0_b(B_2(x))}+\abs{F}^{(2)}_{0,\gamma,B})+\frac{{4}}{\mu^\gamma}{[u]^*_{2,B_{2}(x)}}\,
\end{align*}
for some $C^{\gamma}_1>0$, {which only depends on } $ \gamma\in {(0,\rho_{1}\wedge \rho_{2})}$.  {Then, using \cite[(6.8)]{gilbarg2015elliptic}} yields 
\begin{align*}
d_{x_0,y_{0}}^{2+\gamma}\frac{\abs{\D^2 {u}(x_0)-\D^2 {u}(y_0)}}{\abs{x_0-y_0}^\gamma}\le& {\frac{C^{\gamma}_{1}}{\mu^{2+\gamma}}}\left(\norm{u}_{\Cc^0_b(B_2(x))} + \abs{F}^{(2)}_{0,\gamma,B}\right) \\
& +{4\left(C_{1}(\mu)\|u\|_{\Cc^{0}_{b}(B_{2}(x))}+\mu^{\gamma}[u]^*_{2,\gamma,B_{2}(x)}\right)}\,
 \end{align*}
for some $C_{1}(\mu)>0$ that only depends on $\mu$.
The Schauder estimate then comes from bounding term by term $\abs{F}^{(2)}_{0,\gamma,B}$. First, we {argue as for \cite[(6.19)]{gilbarg2015elliptic}, using  (c.) and (e.) in the definition of   $\tilde \Smf(M,\rho)$, to  obtain}
\begin{align*}
  \abs{\Trb{(\amf(x_0)-{\amf}) ){\D^{2}u}}}^{(2)}_{0,\gamma,B}&\le  {C^{\gamma}_{2}\mu^{2+\gamma}}\left[ C_{2}(\mu)\|u\|_{\Cc^{0}_{b}(B_{2}(x))}+\mu^{\gamma} [u]^*_{2,\gamma,B_2(x)}\right]
  \end{align*}
for some $C^{\gamma}_{2}, C_{2}(\mu)>0$ which only depend on $\gamma$ and $\mu$.     Second, we combine \eqref{eq: birne C1r u} with items (a.) and (e.) in the definition of   $\tilde \Smf(M,\rho)$ to obtain that 
\begin{align*}
\abs{\hat{\bmf}(\cdot,\D {u}^{\top}) {-\lambda u}}^{(2)}_{0,\gamma,B_2(x)}& \le  C^{\gamma}_3(1+\abs{x})
\end{align*}
for some $C^{\gamma}_3>0$, that only depends on $\gamma$. 

{Combining the above with \eqref{eq: birne C1r u} and using the arbitrariness of $x_{0},y_0\in B_2(x)$ leads to}
\begin{align*} 
[u]^*_{2,\gamma,B_2(x)}\le& \frac{C^{\gamma}_{1}}{\mu^{2+\gamma}}\left( \norm{u}_{\Cc^0_b(B_2(x))} +C^{\gamma}_3(1+\abs{x})
\right)+ \frac{C^{\gamma}_{1}C^{\gamma}_{2}}{2}\left[ C_{2}(\mu)\|u\|_{\Cc^{0}_{b}(B_{2}(x))}+\mu^{\gamma} [u]^*_{2,\gamma,B_2(x)}\right] \\
&+ 4\left(C_{1}(\mu)\|u\|_{\Cc^{0}_{b}(B_{2}(x))}+\mu^{\gamma}[u]^*_{2,\gamma,B_{2}(x)}\right)
 \end{align*}
We now take $\mu>0$ small enough and recall \eqref{eq: birne C1r u}  to obtain, for each  $0<\gamma<  {\rho_1\wedge \rho_2}$, a constant $C^{\gamma}_4>0$, independent on $x$, such that 
\[ 
[u]^*_{2,\gamma,B_2(x)}\le C^{\gamma}_{4}\left(1+|x|\right)\,
\]
and we conclude by using  \cite[(6.9)]{gilbarg2015elliptic} and the fact that the distance between a point of  $B_{1}(x)$ and the boundary of $B_{2}(x)$ is a least $1$.
\end{proof}

%  \bibliographystyle{plain}
%\bibliography{biblio-greg-1}
 
\def\cprime{$'$} \def\cprime{$'$}

\end{document}